\documentclass[11pt]{amsart}
\usepackage{amssymb,amsmath,mathrsfs,enumerate,xparse,mathtools}
\usepackage{xspace}
\usepackage[pagebackref, colorlinks=true,linkcolor=blue,citecolor=blue]{hyperref}

\usepackage{graphicx}
\usepackage{fancyhdr}
\usepackage[margin=2.5cm]{geometry}
\hypersetup{
 colorlinks   = true,
 urlcolor     = blue,
 linkcolor    = blue,
 citecolor   = red ,
 bookmarksopen=true
}

\theoremstyle{plain}
\newtheorem{thrm}{Theorem}[section]
\newtheorem{lemma}[thrm]{Lemma}
\newtheorem{prop}[thrm]{Proposition}
\newtheorem{cor}[thrm]{Corollary}
\newtheorem{rmrk}[thrm]{Remark}
\newtheorem{dfn}[thrm]{Definition}
\newtheorem{prob}[thrm]{Problem}
\newtheorem{hyp}[thrm]{Hypothesis}
\newcommand{\Hsolv}{Hypothesis~\ref{H:solv}\xspace}
\newcommand{\Hcomp}{Hypothesis~\ref{H:comp}\xspace}
\newcommand{\Hcut}{Hypothesis~\ref{H:cut}\xspace}
\newcommand{\Hgrow}{Hypothesis~\ref{H:grow}\xspace}

\allowdisplaybreaks
\begin{document}
\newcommand{\Rn}{\mathbb R^n}
\newcommand{\R}{\mathbb R}
\newcommand{\n}{\nabla}
\newcommand{\ve}{\varepsilon}
\newcommand{\sa}{\langle}
\newcommand{\da}{\rangle}
\newcommand{\bM}{\mathbb M}
\newcommand{\Ric}{\operatorname{Ric}}

\numberwithin{equation}{section}

\title[]{A nonlinear Li-Yau inequality and its consequences}

\author{Agnid Banerjee}
\address{School of Mathematical and Statistical Sciences\\ Arizona State University}\email[Agnid Banerjee]{agnid.banerjee@asu.edu}

\author{Nicola Garofalo}
\address{School of Mathematical and Statistical Sciences\\ Arizona State University}\email[Nicola Garofalo]{nicola.garofalo@asu.edu}

\author{Hamidreza Mahmoudian}
\address{School of Mathematical and Statistical Sciences\\ Arizona State University}\email[Hamidreza Mahmoudian]{hamidreza.mahmoudian@asu.edu}

\begin{abstract}
We prove a sharp nonlinear version of the celebrated Li-Yau inequality for positive solutions of the normalized parabolic $p$-Laplacian equation $u_t = \Delta u + (p-2)|\nabla u|^{-2}\nabla^2u(\nabla u,\nabla u)$, $1<p<\infty$, on a closed Riemannian manifold with nonnegative Ricci curvature, the equation being interpreted in the viscosity sense on the critical set of the solution. The constant is best possible, even within the class of closed manifolds: it is attained identically by an explicit self-similar solution in flat $\Rn$, and its sharpness transfers to the compact setting through a large-torus limit along the flat tori $\mathbb R^n/(L\mathbb Z)^n$, $L \to \infty$. The proof rests on an exact Bochner-type identity for a family of uniformly parabolic approximating flows in which the second-order regularization and the first-order eikonal term are decoupled: for this family the maximum principle applies with the sharp constant, uniformly in the regularization parameter, and with no assumption on the critical set of the solution. The identity is moreover form-invariant under the classical $\alpha$-relaxation of the Li-Yau functional; as a consequence we also obtain the corresponding inequality on closed manifolds with $\operatorname{Ric} \ge -\kappa$ (again with no assumption beyond the Ricci lower bound), and, on complete noncompact manifolds with $\operatorname{Ric} \ge 0$, the sharp inequality for the approximating flows -- again with no assumption on the critical set -- under a cutoff hypothesis on the distance function (automatic in $\Rn$, and, for $p \ge 2$, under nonnegative sectional curvature) and a qualitative polynomial growth condition on the Li-Yau quantity, satisfied, with exponent zero, by the extremal profile. A sharp global Harnack inequality follows.
\end{abstract}

%
%
\keywords{Li-Yau inequality, Harnack inequality, normalized parabolic $p$-Laplacian, viscosity solutions,  Bochner identity, gradient estimates,  Ricci curvature lower bounds, Hamilton-Jacobi equations}
\subjclass[2020]{Primary 35K92, 35D40; Secondary 35B45, 35B51, 35K65, 53C21, 58J35}

\maketitle

\tableofcontents

\section{Introduction}\label{S:intro}

Let $\bM$ be a closed (compact, without boundary) Riemannian manifold of dimension $n$ satisfying $\operatorname{Ric}(\bM)\ge 0$. In this work we are concerned with positive solutions of the normalized parabolic $p$-Laplacian equation
\begin{equation}\label{0}
u_t = \Delta u + (p-2)\, |\nabla u|^{-2}\, \nabla^2u(\nabla u,\nabla u), \ \ \ \ \ \ 1<p<\infty,
\end{equation}
which is understood in the viscosity sense: at critical points of $u$ the operator is interpreted by means of its upper and lower semicontinuous envelopes, see Section \ref{S:critical} below and \cite{Do, BG, JS, LTW}. Although the operator in \eqref{0} is not in divergence form, and is discontinuous at critical points, it is \emph{uniformly elliptic}, in the sense that its coefficient matrix has eigenvalues contained in the interval
\[
[\min\{1,p-1\},\max\{1,p-1\}].
\]

The equation \eqref{0} originates from the doubly degenerate divergence-form equation
\begin{equation}\label{me}
|\nabla u|^{p-2} u_t = \operatorname{div}(|\nabla u|^{p-2}\nabla u).
\end{equation}
At points where $\nabla u \not= 0$, canceling the common factor $|\nabla u|^{p-2}$ in \eqref{me} produces \eqref{0}, and for $u \in C^2$ with $\nabla u\not=0$ the two equations are equivalent. We emphasize, however, that our results genuinely concern \eqref{0}, and \emph{not} the literal pointwise reading of \eqref{me}. The reason is that on the critical set $\{\nabla u = 0\}$ the equation \eqref{me} carries no information whatsoever when $p>2$: both of its sides vanish identically there, so that, for instance, every spatially constant function $u(x,t) = e^{-Kt}$, $K>0$, solves \eqref{me} pointwise, and for such spurious solutions the Li-Yau inequality \eqref{LY} below fails for large $t$, see Remark \ref{R:formulation}. The viscosity interpretation of \eqref{0} is precisely what restores the missing information on the critical set (it rules out the functions $e^{-Kt}$, forcing instead $u_t = 0$ wherever $u$ is spatially constant), and it is the natural framework in which the equation is well posed \cite{Do, BG, JS}.

In this paper we prove the following nonlinear version of the celebrated result of Li and Yau \cite{LY}.

\begin{thrm}\label{T:LY}
Let $\bM$ be a closed manifold with $\Ric \ge 0$, and let $u>0$ be a viscosity solution of \eqref{0} in $\bM\times (0,\infty)$, continuous up to $t=0$, with $u(\cdot,0) \in C^\infty(\bM)$, $u(\cdot,0)>0$. Then
\begin{equation}\label{LY}
(p-1)\frac{|\nabla u|^2}{u^2} - \frac{u_t}{u} \le \frac{n+p-2}{2(p-1)t}
\end{equation}
on $\bM\times(0,\infty)$, in the sense that $v = \log u$ is a viscosity supersolution of the Hamilton-Jacobi equation
\[
\phi_t - (p-1)|\nabla \phi|^2 + \frac{n+p-2}{2(p-1)\,t} = 0 .
\]
In particular, \eqref{LY} holds in the pointwise sense at every point at which $\log u$ is differentiable, and everywhere if $u$ is smooth.
\end{thrm}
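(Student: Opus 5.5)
The strategy is to go through smooth, uniformly parabolic approximating flows, for which the abstract announces an exact Bochner-type identity. For $\ve>0$ consider
\[
u^\ve_t = \Delta u^\ve + (p-2)\,\frac{\nabla^2 u^\ve(\nabla u^\ve,\nabla u^\ve)}{|\nabla u^\ve|^2+\ve^2},\qquad u^\ve(\cdot,0)=u(\cdot,0).
\]
The coefficient matrix stays uniformly elliptic with bounds independent of $\ve$, and the equation is smooth. Standard quasilinear parabolic theory on the closed manifold $\bM$ therefore gives a unique smooth positive solution for all times. Positivity comes from the comparison principle with the constants $\min u(\cdot,0)$ and $\max u(\cdot,0)$. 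I would also arrange that the first-order (eikonal) term carries exactly the factor $(p-1)$, and decouple it from the second-order regularization as the abstract describes. In practice this means writing the equation for $v^\ve=\log u^\ve$ as
\[
v_t = a^{ij}_\ve(\nabla v)\,v_{ij} + (p-1)|\nabla v|^2,
\]
and, if necessary, modifying the lower-order term so that the eikonal coefficient is exactly $p-1$ rather than $\ve$-dependent.

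For the approximating problem I would prove \eqref{LY} by the Li--Yau maximum-principle argument. Set
\[
F = t\big((p-1)|\nabla v|^2 - v_t\big) = -t\,a^{ij}_\ve v_{ij}.
\]
Let $L$ be the linearized operator of the $v$-equation, $L\phi = a^{ij}_\ve\phi_{ij} + b^k\phi_k$, where $b^k$ includes the drift $2(p-1)v_k$ and the derivatives of $a_\ve$ in $\nabla v$. The key computation is a Bochner-type identity for $(L-\partial_t)F$ that uses $\Ric\ge0$. It should produce a term of the form $t\,|A^{1/2}\nabla^2 v\,A^{1/2}|^2$, with $A = a_\ve$. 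One then needs a sharp trace inequality of the form
\[
\operatorname{tr}(A M)^2 \le \Big(\operatorname{tr} A\Big)\,\operatorname{tr}\big((AM)^2\big), \qquad \operatorname{tr} A = n-1+\frac{(p-1)|\nabla v|^2+\ve^2}{|\nabla v|^2+\ve^2}\le n+p-2,
\]
for $p\ge2$, with the analogous bound for $p<2$ after the appropriate weighting. The aim is to reach
\[
(L-\partial_t)F \ge \frac{2(p-1)}{(n+p-2)\,t}F^2 - \frac{F}{t}
\]
at a positive maximum, up to gradient terms that vanish there. The constant $\frac{n+p-2}{2(p-1)}$ then comes out by comparing the weight $2(p-1)$ coming from the eikonal drift with the trace bound $n+p-2$. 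The main obstacle is this step: the terms involving derivatives of $a_\ve(\nabla v)$ produce third-derivative and mixed contributions. These have to be absorbed exactly, which is presumably why one needs the exact identity and the decoupling, rather than mere estimates that lose constants. I would first check the computation in flat $\Rn$ along the gradient-adapted frame $e_1=\nabla v/|\nabla v|$, where $A=\mathrm{diag}(\text{coefficient in } e_1, 1,\dots,1)$, and only then add the curvature term $t\,a^{ij}_\ve R_{ij}$-type contributions, which have the good sign.

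Since $F\le0$ at $t=0$ (as $t=0$ and the data are smooth), the maximum principle on the closed manifold gives
\[
F\le \frac{n+p-2}{2(p-1)}
\]
for $u^\ve$, uniformly in $\ve$. Finally, I would pass to the limit. Uniform ellipticity gives equicontinuous bounds (Krylov--Safonov, or barriers from the smooth data), so $u^\ve\to u$ locally uniformly along a subsequence. Stability of viscosity solutions, together with the uniqueness of viscosity solutions of \eqref{0} from Section \ref{S:critical}, identifies the limit as $u$. The inequality for $u^\ve$ says that $v^\ve$ is a classical supersolution of $\phi_t-(p-1)|\nabla\phi|^2+\frac{n+p-2}{2(p-1)t}=0$. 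Since $v^\ve\to v=\log u$ locally uniformly on $t>0$, stability of viscosity supersolutions for Hamilton--Jacobi equations yields the claim. The pointwise statement at differentiability points follows because there the sub/superdifferentials are the gradient.
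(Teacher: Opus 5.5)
For $p\ge 2$ your route is the paper's. You use the flow $v_t=a^{ij}_\ve(\nabla v)v_{ij}+(p-1)|\nabla v|^2$ with $a^{ij}_\ve=g^{ij}+(p-2)v^iv^j/\sigma$, $\sigma=|\nabla v|^2+\ve^2$, run the maximum principle for $F$ with drift $2A_\ve$, and use $\operatorname{tr}A_\ve=n+(p-2)\beta\le n+p-2$, where $\beta=|\nabla v|^2/\sigma$. You are right to abandon your first display. It is exactly the nondivergence form of the regularization in Remark~\ref{R:naive}, which loses the constant. Moreover, with $|\nabla u|^2+\ve^2$ in place of $|\nabla u|^2+\ve^2u^2$, the coefficients of the $v$-equation depend on $v$ itself.

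Two corrections are needed even for $p\ge 2$.
\begin{itemize}
\item The exact identity (Lemma~\ref{L:bochner-eps}) produces $\mathcal Q_\ve=\operatorname{tr}(A_\ve(\nabla^2v)^2)=|\nabla^2v|^2+(p-2)|\nabla^2v(\eta)|^2$, not $|A^{1/2}\nabla^2v\,A^{1/2}|^2$. The terms $\mp 4(p-2)\mathcal P_\ve$ coming from the space and time derivatives of $a_\ve(\nabla v)$ cancel exactly; this is the ``absorption'' you left open.
\item The matching inequality is Cauchy--Schwarz, $(\operatorname{tr}(AS))^2\le \operatorname{tr}A\,\operatorname{tr}(AS^2)$, which is Lemma~\ref{L:newton-ext}. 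The inequality you wrote, $(\operatorname{tr}(AM))^2\le\operatorname{tr}A\,\operatorname{tr}((AM)^2)$, is false whenever $\operatorname{tr}A<n$: take $A^{1/2}MA^{1/2}=I$.
\end{itemize}

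The genuine gap is $1<p<2$, which your ``analogous bound after the appropriate weighting'' does not handle.
\begin{itemize}
\item Here $\operatorname{tr}A_\ve$ exceeds $n+p-2$ by $(2-p)\ve^2/\sigma$, which is almost $2-p$ near the critical set. The Cauchy--Schwarz bound is attained when the Hessian is a multiple of the metric. So with eikonal coefficient exactly $p-1$, the maximum principle only gives $t((p-1)|\nabla v|^2-v_t)\le \frac{n}{2(p-1)}$.
\item This reflects the flow itself, not a weak estimate. Where $|\nabla v|\ll\ve$ the flow is, to leading order, $v_t=\Delta v+(p-1)|\nabla v|^2$, so $e^{(p-1)v}$ solves the heat equation. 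Its sharp Li--Yau constant $\frac n2$ corresponds to $\frac{n}{2(p-1)}>\frac{n+p-2}{2(p-1)}$.
\item The rescaling $v(x,t)\mapsto v(\lambda x,\lambda^2 t)$ maps the $\ve$-flow onto the $\lambda\ve$-flow and preserves the Li--Yau quantity. So this regime cannot be avoided at any fixed $\ve$.
\end{itemize}
Consequently, your insistence that the eikonal coefficient be exactly $p-1$ and independent of $\ve$ is precisely what must be dropped.

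The missing idea is the correction \eqref{heps}. The identity holds for any first-order term $h$, at the cost of the single remainder $h''a^{ij}_\ve s_is_j$. One takes $h_\ve'(s)=(p-1)+K(\ve^2/\sigma)^\delta$ with $K=\frac{2(p-1)(2-p)}{n+p-2}$ and $\delta=\frac{p-1}{4}$.
\begin{itemize}
\item The excess over $p-1$ pays for the loss $K\ve^2/\sigma$ in the trace bound. It also pays for the negative remainder, via $a^{ij}_\ve s_is_j\le \frac{4s}{p-1}\mathcal Q_\ve$.
\item $F_\ve=t(h_\ve(s)-v_t)-a$ still dominates the true Li--Yau quantity, because $h_\ve\ge(p-1)s$.
\item $h_\ve\to(p-1)s$, so the limit equation is unchanged.
\end{itemize}

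Finally, equicontinuity of the approximations does not follow from uniform ellipticity alone, because of the gradient-dependent lower-order term. The paper bypasses it with Barles--Perthame half-relaxed limits. These need only the $\ve$-uniform bounds $\psi-C_*t\le v^\ve\le\psi+C_*t$ and the comparison principle for the limit equation.
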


The constant in \eqref{LY} is best possible, and this is true \emph{within the class of closed manifolds} to which Theorem \ref{T:LY} applies: no inequality of the form \eqref{LY} with a smaller constant can hold on all closed manifolds with $\Ric \ge 0$, see Proposition \ref{P:sharp}. The extremal profile is the self-similar solution on flat $\Rn$ found in \cite{BG}:
\begin{equation}\label{model}
G_p(x,t) = t^{- \frac{n+p-2}{2(p -1)}} \exp \left({-\frac{|x|^2}{4(p-1)t}}\right),
\end{equation}
which satisfies \eqref{LY} with equality at every $(x,t) \in \Rn\times(0,\infty)$, and plays here the role of the backward Gaussian in the classical Li-Yau theory; sharpness on closed manifolds then follows by a large-torus limit: the flat tori $\Rn/(L\mathbb Z)^n$ are closed with $\Ric \equiv 0$, and, as the period $L\to\infty$, suitable periodic solutions converge locally on $\Rn$ to the extremal profile, so that no better constant can hold uniformly on the class of closed manifolds with $\Ric \ge 0$ (Proposition \ref{P:sharp}).

When $p = 2$, the estimate \eqref{LY} is the sharp Li-Yau inequality with constant $\frac n2$. As a consequence of Theorem \ref{T:LY}, we establish the following global optimal Harnack inequality.

\begin{thrm}\label{T:global-harnack}
Under the assumptions of Theorem \ref{T:LY}, for any $0<t_1<t_2$ and $x_1,x_2\in \bM$, we have
\begin{equation}\label{Harnack}
u(x_1,t_1)\le u(x_2,t_2)\left(\frac{t_2}{t_1}\right)^{\!\frac{n+p-2}{2(p-1)}}
\exp\!\Big(\frac{d(x_1,x_2)^2}{4(p-1)(t_2-t_1)}\Big),
\end{equation}
where $d(x_1,x_2)$ denotes the Riemannian distance.
\end{thrm}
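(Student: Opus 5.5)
The plan is to integrate the Li-Yau inequality of Theorem \ref{T:LY} along space-time curves, as in the classical Li-Yau argument. We work with $v=\log u$. Theorem \ref{T:LY} says precisely that $v$ is a viscosity supersolution of
\[
\phi_t=(p-1)|\nabla\phi|^2-\frac{n+p-2}{2(p-1)t}.
\]
Formally, this gives $v_t\ge (p-1)|\nabla v|^2-\frac{c}{t}$ with $c=\frac{n+p-2}{2(p-1)}$.

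Take a minimizing geodesic $\gamma:[t_1,t_2]\to\bM$ with $\gamma(t_1)=x_1$, $\gamma(t_2)=x_2$, parametrized with constant speed $|\dot\gamma|=d/(t_2-t_1)$, where $d=d(x_1,x_2)$. Set $\eta(t)=v(\gamma(t),t)$. At points where $v$ is differentiable,
\[
\eta'=v_t+\langle\nabla v,\dot\gamma\rangle\ge (p-1)|\nabla v|^2-|\nabla v||\dot\gamma|-\frac ct\ge -\frac{|\dot\gamma|^2}{4(p-1)}-\frac ct,
\]
by Young's inequality. Integrating from $t_1$ to $t_2$ gives
\[
v(x_1,t_1)-v(x_2,t_2)\le c\log\frac{t_2}{t_1}+\frac{d^2}{4(p-1)(t_2-t_1)},
\]
and exponentiating yields \eqref{Harnack}.

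The obstacle is that $u$ is only a viscosity solution, so $v$ need not be differentiable along $\gamma$. I would handle this in the viscosity framework and avoid differentiating $v$ directly. The key fact is that the Hamiltonian $H(t,\xi)=(p-1)|\xi|^2-c/t$ is convex in $\xi$, and its Lagrangian is $L(t,q)=\frac{|q|^2}{4(p-1)}+\frac ct$. For such equations, a continuous viscosity supersolution satisfies the dynamic programming (sub-optimality) inequality along every Lipschitz curve:
\[
v(\gamma(t_1),t_1)-v(\gamma(t_2),t_2)\le\int_{t_1}^{t_2}L(s,\dot\gamma(s))\,ds.
\]
This is the standard comparison between supersolutions and the value function of the associated control problem.

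To keep the argument self-contained, I would prove this inequality by a sup-convolution in space-time on $[t_1/2,\infty)$, which regularizes $v$ to semiconvex functions $v^\varepsilon$. The coefficient $c/t$ is smooth away from $t=0$, so each $v^\varepsilon$ is an approximate supersolution with error $o(1)$. Since $v^\varepsilon$ is semiconvex, it is differentiable a.e., and we can slightly perturb $\gamma$ within a family of nearby curves, using Fubini, so that a.e. point of the curve is a differentiability point. The computation above then applies, and we let $\varepsilon\to0$ using the uniform convergence $v^\varepsilon\to v$ on compact sets.

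As an alternative route, Theorem \ref{T:LY} is established through the smooth approximating flows with the sharp constant uniform in the regularization. We could therefore run the curve computation on the smooth solutions $u^\varepsilon$ and pass to the limit $u^\varepsilon\to u$ locally uniformly, which avoids the nonsmooth step entirely. I expect either version of this justification to be the main technical step; the estimate itself is routine.
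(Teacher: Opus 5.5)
Your proposal is correct. The route you list as the alternative is exactly the paper's proof. The paper does the curve computation for smooth $u$, completing the square and taking the constant-speed minimizing geodesic. It then obtains the general case by applying this to $u^\ve=e^{v^\ve}$, for which \eqref{LY-eps-u} holds classically, and letting $\ve\to0$ using the locally uniform convergence $v^\ve\to\log u$ from the proof of Theorem~\ref{T:LY-complete}. Only convergence at the two points $(x_1,t_1)$, $(x_2,t_2)$ is actually used.

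Your primary route is genuinely different. It deduces \eqref{Harnack} from the \emph{statement} of Theorem~\ref{T:LY} alone, via the sub-optimality principle for viscosity supersolutions of a Hamilton--Jacobi equation with convex Hamiltonian. This gains independence from how $u$ was produced: any viscosity supersolution of $\phi_t-(p-1)|\nabla\phi|^2+\frac ct=0$ would do. The paper's route is much cheaper, since it never leaves the smooth setting and needs no regularization or Fubini argument.

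If you keep the primary route, fix one slip. Supersolutions are preserved by the \emph{inf}-convolution
\[
v_\lambda(x,t)=\inf_{(y,s)}\Big\{v(y,s)+\frac{d(x,y)^2+|t-s|^2}{2\lambda}\Big\},
\]
which is semiconcave. The sup-convolution preserves subsolutions and yields semiconvex functions, so it is the wrong tool here.

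Using the Riemannian penalty $d(x,y)^2$ is also what makes the manifold case clean. At a contact point, the spatial gradients of the two test functions, at $x_0$ and at the minimizing $y_0$, both have length $d(x_0,y_0)/\lambda$; this needs $\lambda$ small, so that $d(x_0,y_0)\le C\sqrt\lambda$ lies below the injectivity radius. The time derivatives match exactly. Since the Hamiltonian depends only on $|\xi|$, the sole error is $|c/s_0-c/t_0|=O(\sqrt\lambda)$ on $t\ge t_1/2$.
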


The normalized equation \eqref{0} was considered by the second-named author as early as 1993, in a set of unpublished handwritten notes \cite{Ga} in which a Bochner-type maximum principle computation was developed for a general class of quasilinear flows containing \eqref{0}, and it was observed that, despite the degeneracy of the divergence form \eqref{me}, the normalized equation should enjoy good regularization properties. These notes are acknowledged in the introduction of \cite{JS}, where it is noted that they contain a computation leading to Lemma 3.1 of that paper, and that they constitute, to the authors' knowledge, the first appearance of the equation together with the recognition of its regularization properties. In more recent years the equation \eqref{0} has resurfaced in connection with the tug-of-war games with noise of Peres-Sheffield \cite{PS}, whose value functions are governed by the normalized $p$-Laplacian, and it has since been studied by several authors; for well-posedness and regularity we refer to \cite{Do, BG, JS}, and to \cite{MPR} for the parabolic game-theoretic interpretation. Gradient estimates of Li-Yau type for $p$-harmonic functions and related flows go back to \cite{KN, Mo}. We remark, in this connection, that in much of the literature on quasilinear operators in trace form the Bochner-type computations are necessarily formal: they are carried out for smooth solutions, at points where the gradient does not vanish, while the critical set -- where the operator is discontinuous and, as observed above, the divergence-form equation \eqref{me} carries no information -- is not addressed. Providing a rigorous justification of these computations, in the viscosity framework and with no loss in the sharp constants, is one of the main purposes of the present paper. Sharp estimates for quasilinear isotropic operators on manifolds, together with the viscosity techniques on which we rely, are developed in \cite{AC, LTW}, see also \cite{LW, CW}.

Let us briefly describe the proof of Theorem \ref{T:LY}, since its structure is perhaps of independent interest. The natural strategy, following \cite{LY}, is to apply the maximum principle to the functional
\[
F = t\big((p-1)|\nabla v|^2 - v_t\big) - \frac{n+p-2}{2(p-1)}, \qquad v = \log u,
\]
using a Bochner-type identity for the linearized operator
\[
\mathcal L(\xi) = \Delta + (p-2)\sa \nabla^2(\cdot)\,\xi,\xi\da,\ \ \ \ \ \ \ \xi = \frac{\nabla v}{|\nabla v|}.
\]
In Section \ref{S:proof} we carry out this computation: remarkably, the quadratic terms produced by differentiating the direction field $\xi$, which are of the same order as those one wishes to retain, cancel exactly, and the resulting differential inequality, combined with a generalized Newton inequality (Lemma \ref{L:bochner}), is coercive with the sharp constant. This yields Theorem \ref{T:LY} \emph{under the additional assumption} that $\nabla v \not= 0$ at the point where the relevant functional attains its maximum -- an assumption which has no a priori justification, since $\mathcal L(\xi)$ and the drift field appearing in the identity are undefined on the critical set of $v$.

One of the main new points of this paper is the removal of this nondegeneracy assumption, carried out in Section \ref{S:critical}. We introduce a family of uniformly parabolic flows in which \emph{only the second-order part} of the operator is regularized,
\[
v_t = \Delta v + (p-2)\,\frac{\nabla^2v(\nabla v,\nabla v)}{|\nabla v|^2+\ve^2} + h_\ve(|\nabla v|^2),
\]
while the first-order term $h_\ve$ remains at our disposal. We prove that for \emph{every} choice of $h_\ve$ the exact Bochner identity of Section \ref{S:proof} survives the regularization, the entire remainder being the single term \[
h_\ve''(s)\,a^{ij}_\ve s_i s_j,\ \ \ \ \  s = |\nabla v|^2,
\]
see Lemma \ref{L:bochner-eps}. For $p \ge 2$ the choice $h_\ve(s) = (p-1)s$ makes this remainder vanish identically, while for $1<p<2$ a lower-order correction of size $O(\ve^{(p-1)/2})$ compensates exactly for the loss in the Newton inequality near the critical set. As a result, the approximating flows satisfy the Li-Yau inequality \emph{with the sharp constant, uniformly in} $\ve$, and with no assumption whatsoever on the critical set (Theorem \ref{T:LY-eps}). Theorem \ref{T:LY} then follows by letting $\ve \to 0$. 

The idea of constructing solutions of \eqref{me} as limits of uniformly parabolic regularizations of the weight, $|\nabla u|^{p-2} \rightsquigarrow (|\nabla u|^2+\ve^2)^{\frac{p-2}2}$, is a classical device which goes back to \cite{ES} for the motion of level sets by mean curvature (to which \eqref{0} formally converges as $p\to 1$), and which was implemented for the equation \eqref{me} in \cite{BG}, where viscosity solutions are constructed precisely as limits of such approximations and the relevant comparison principles are established. We emphasize, however, that this ``natural'' regularization of the weight does \emph{not} yield the sharp constant in \eqref{LY}: see Remark \ref{R:naive}. The choice of the flows in Section \ref{S:critical}, in which the second-order regularization is decoupled from the first-order term, is dictated precisely by the requirement that the sharp constant survive the approximation.

\medskip
Beyond Theorems \ref{T:LY} and \ref{T:global-harnack}, the machinery of this paper admits an exact $\alpha$-parametrized extension, in the spirit of the classical relaxation $F_\alpha = t(|\nabla v|^2 - \alpha v_t) - \frac{n\alpha^2}2$, $\alpha > 1$, of \cite{LY}. Remarkably, the Bochner identity underlying our proof turns out to be \emph{form-invariant} under this deformation (Lemma \ref{L:alpha-id}). Two further results follow. On a closed manifold with $\Ric \ge -\kappa$, $\kappa \ge 0$, we prove the Li-Yau inequality
\[
(p-1)\frac{|\nabla u|^2}{u^2} - \alpha\, \frac{u_t}{u} \;\le\; \frac{\alpha^2(n+p-2)}{2(p-1)t}, \qquad \alpha = 1 + c_p \kappa t,
\]
with an explicit constant $c_p$ (see \eqref{cp-def} and Theorem \ref{T:LY-kappa}). We stress that, exactly as in the classical Li-Yau theory, in the compact case all our results involve only the Ricci lower bound. 

On a \emph{complete noncompact} manifold with $\Ric \ge 0$, under a cutoff hypothesis on the distance function (\Hcut of Section \ref{S:alpha}, automatically satisfied when $\bM = \Rn$, and, for $p \ge 2$, when $\operatorname{sec}_{\bM} \ge 0$), we prove the sharp inequality for the approximating flows \eqref{Eeps} under a single additional hypothesis of Karp-Li type: a qualitative polynomial growth condition on the Li-Yau quantity itself (\Hgrow), which is implied a posteriori by the conclusion (with exponent zero) and is satisfied, again with exponent zero, by the extremal profile \eqref{model} (Theorem \ref{T:noncompact} and Corollary \ref{C:noncompact-limit}). 

We stress that in the noncompact theorem, exactly as in the compact one, no assumption whatsoever is made on the critical set of the solution; what separates it from an unconditional statement for a given viscosity solution is the identification of the limit of the approximations, that is, a comparison principle for the degenerate equation \eqref{0} in a class of $e^{A|x|^2}$-type growth -- available at present only for bounded solutions \cite[Theorem 4.2]{BG} -- and not any property of the approximating flows themselves, which are uniformly parabolic at fixed $\ve$; see Remark \ref{R:obstruction}. 

A second elementary but, we believe, new technical point deserves explicit mention: the localizing cutoff is taken in the form $\phi = \psi^q$, with $q$ an integer exceeding the growth exponent $N$ of \Hgrow. For such $\phi$,
\[
\frac{|\nabla\phi|^2}{\phi^2} \;\le\; \frac{Cq^2}{R^2}\,\phi^{-2/q},
\]
and the quadratic error generated by the drift, which is bounded by a multiple of $\frac{|\nabla\phi|^2}{\phi^2}G^2$ with $G = \phi\,F_{\alpha,\theta}$, is therefore absorbed by the coercive term through a single application of Young's inequality at all points where $\phi^{2/q} \ge cR^{-2}$; at the remaining points $\phi \le CR^{-q}$, and the growth bound of \Hgrow yields $G \le CR^{\,N-q} \to 0$ as $R \to \infty$. See Remark \ref{R:psiq}. \Hcut, which for $p=2$ reduces to the Laplacian comparison theorem and is thus implied by $\Ric \ge 0$, is a known artifact of pointwise localization arguments for quasilinear operators in trace form: it is the analogue of the sectional curvature assumption in the local gradient estimate of Kotschwar-Ni \cite{KN}, which Wang-Zhang \cite{WZ} later removed by integral (Moser-type) methods exploiting the divergence structure. We expect \Hcut to be removable in the same way; we point out, however, that in our parabolic setting such integral methods cannot be applied naively to \eqref{me}, since the \emph{weak} formulation of \eqref{me} suffers from the same deficiency on the critical set as its pointwise reading (the spatially constant functions $e^{-Kt}$ of Remark \ref{R:formulation} are weak solutions when $p>2$), and they should instead be applied to the regularized flows of Section \ref{S:critical}.

\medskip
The paper is organized as follows. In Section \ref{S:prelim} we collect the algebraic preliminaries, notably the generalized Newton inequality. In Section \ref{S:proof} we establish the Bochner-type identity and the coercivity estimate for the Li-Yau functional at points where $\nabla v\not=0$. Section \ref{S:critical} contains the removal of the nondegeneracy assumption and the proof of Theorem \ref{T:LY}. In Section \ref{S:alpha} we develop the $\alpha$-parametrized machinery and prove Theorems \ref{T:LY-kappa} and \ref{T:noncompact}. In Section \ref{S:harnack} we prove Theorem \ref{T:global-harnack}, and in Section \ref{S:final} we discuss the equality case and some open problems.

\section{Preliminary material}\label{S:prelim}

In this section we collect some material which will be needed in the proof of the main result.
For any given $\xi\in \mathbb S^{n-1}$, we consider the matrix $\mathcal A(\xi) = [a_{ij}(\xi)]\in \operatorname{Sym}(n;\R)$, with entries
\begin{equation}\label{A}
a_{ij}(\xi) = \delta_{ij} + (p-2) \xi_i \xi_j = I + (p-2) \xi \otimes \xi.
\end{equation}
We will denote
\[
\mathcal L(\xi) u = \sum_{i,j=1}^n a_{ij}(\xi) \nabla_{ij} u = \operatorname{tr}(\mathcal A(\xi) \nabla^2 u),
\]
where $\nabla^2 u =[\nabla_{ij} u]$ denotes the Hessian of $u$.
Note that $\mathcal A(\xi)\xi =  (p-1)\xi$, and that for any vector $v\perp \xi$ we have $\mathcal A(\xi)v = v$. Therefore, $\operatorname{spec}(\mathcal A(\xi)) = \{1,...,1,p-1\}$, and thus the matrix function $\xi\to \mathcal A(\xi)$ is uniformly elliptic on the unit sphere, and for any $x\in \Rn\setminus\{0\}$ and $\xi\in \mathbb S^{n-1}$ we have
\begin{equation}\label{ell}
\min\{1,p-1\} |x|^2 \le \sum_{i,j=1}^n a_{ij}(\xi) x_i x_j \le \max\{1,p-1\} |x|^2.
\end{equation}
The interest in the matrix \eqref{A} derives from the equation \eqref{0} which, at points where $\nabla u\not= 0$, with $\xi = \frac{\nabla u}{|\nabla u|}$ can be written in the form
\[
u_t = \mathcal L(\xi)\, u = \Delta u + (p-2) \nabla_{ij} u \nabla_i u \nabla_j u |\nabla u|^{-2}.
\]

We next establish an elementary generalization of the classical Newton inequality. Let $S = \{v_{ij}\}\in \operatorname{Sym}(n;\R)$ be a symmetric matrix with real coefficients, and for $\xi\in \mathbb S^{n-1}$ denote $|S\xi|^2 = \sa S\xi,S\xi\da$, and by $S(\xi,\xi) = \sa S\xi,\xi\da$ the quadratic form associated with $S$. Also, we denote by $|S|^2 = \sum_{i,j=1}^n v_{ij}^2$ the Hilbert-Schmidt norm of $S$.

\medskip

\noindent \textbf{Notation:} Throughout the paper we use the curvature convention
\[
(\nabla_i\nabla_j-\nabla_j\nabla_i)X_k
=
R_{k\ell ij}X_\ell,
\]
so that
\[
g^{ij}R_{k\ell ij}
=
\operatorname{Ric}_{k\ell}.
\]

\medskip

\begin{lemma}\label{L:bochner}
For any $p>1$ and any $S \in \operatorname{Sym}(n;\R)$ and $\xi\in \mathbb S^{n-1}$, we have
\begin{equation}\label{boc}
|S|^2 + (p-2) |S\xi|^2 \ge \frac{1}{n+p-2} \left[\operatorname{tr}(S) + (p-2) S(\xi,\xi)\right]^2.
\end{equation}
\end{lemma}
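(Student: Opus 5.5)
The plan is to reduce \eqref{boc} to the classical Newton inequality $|T|^2\ge \frac{1}{m}(\operatorname{tr}T)^2$ for an $m\times m$ symmetric block, combined with a weighted Cauchy--Schwarz inequality. Both sides of \eqref{boc} are invariant under orthogonal changes of basis, provided $\xi$ is rotated along with $S$. So I will first fix an orthonormal basis $\{e_1,\dots,e_n\}$ with $e_1=\xi$, and write
\[
S=\begin{pmatrix} a & b^T\\ b & T\end{pmatrix},\qquad a=S(\xi,\xi),\quad b\in\R^{n-1},\quad T\in \operatorname{Sym}(n-1;\R).
\]

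In this basis every quantity in \eqref{boc} is explicit:
\[
|S|^2=a^2+2|b|^2+|T|^2,\qquad |S\xi|^2=a^2+|b|^2,
\]
\[
\operatorname{tr}S+(p-2)S(\xi,\xi)=(p-1)a+\operatorname{tr}T .
\]
Hence the left-hand side of \eqref{boc} equals
\[
(p-1)a^2+p|b|^2+|T|^2 .
\]
Since $p>1>0$, the term $p|b|^2$ is nonnegative and can be discarded. Newton's inequality for $T$ then gives the lower bound
\[
(p-1)a^2+\frac{(\operatorname{tr}T)^2}{n-1}.
\]

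Next I apply Cauchy--Schwarz to the vectors $\big(\sqrt{p-1},\sqrt{n-1}\big)$ and $\big(\sqrt{p-1}\,a,\ \operatorname{tr}T/\sqrt{n-1}\big)$. This uses $p>1$, so that $\sqrt{p-1}$ is real. It yields
\[
\big((p-1)a+\operatorname{tr}T\big)^2\le (p-1+n-1)\Big((p-1)a^2+\frac{(\operatorname{tr}T)^2}{n-1}\Big).
\]
Combining this with the lower bound from the previous paragraph gives exactly \eqref{boc}, with the constant $\frac{1}{n+p-2}$.

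There is no real obstacle here. The only points needing care are the degenerate case $n=1$ and the sign of the discarded cross term. When $n=1$ the block $T$ and the vector $b$ are absent, and \eqref{boc} reads $(p-1)a^2\ge \frac{(p-1)^2a^2}{p-1}$, which is trivially an equality. The discarded term $p|b|^2$ is nonnegative precisely because $p>0$.

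For the later sharpness discussion I will also record when equality holds: exactly when $b=0$, $T=\lambda I_{n-1}$, and $a=\lambda$. That is, $S=\lambda I$, which is the structure realized by the extremal profile \eqref{model}.
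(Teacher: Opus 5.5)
Your proof is correct and follows essentially the same route as the paper. The paper also rotates so that $\xi=e_1$, drops the $p|b|^2$ term, and applies Newton's inequality to the $(n-1)\times(n-1)$ block; its final step is phrased after clearing denominators as $(n-1)q^2+\tau^2/(n-1)\ge 2q\tau$ rather than as Cauchy--Schwarz, which is equivalent. Your separate treatment of $n=1$ (where the paper's division by $n-1$ is meaningless) and the equality characterization $S=\lambda I$ are correct additions.
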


\begin{proof}
If $p=2$, the claimed inequality is just Newton's inequality, and there is nothing to prove. Assume $p\not= 2$. Note that, if $T\in \mathbb O(n)$, and we replace $S$ by $\hat S = T^\star S T$ and $\xi$ by $\hat \xi = T^\star(\xi)$, then \eqref{boc} does not change. Therefore, we can assume without restriction that $\xi = e_1$. In this way, we have $S(\xi,\xi) = v_{11}$.
Denote by $s = \operatorname{tr}(S)$ and $q = S(\xi,\xi) = v_{11}$, and write $S = \begin{pmatrix} q & b\\ b^\star & M\end{pmatrix}$, where $b = (v_{1j})$, $j=2,...n$, and $M= [v_{ij}]\in \operatorname{Sym}(n-1;\R)$, with $i, j=2,...,n$. Since the Hilbert-Schmidt norm of $S$ is given by
\[
|S|^2 = q^2 + 2 |b|^2 + |M|^2,
\]
and
\[
|S\xi|^2 = \sa Se_1,Se_1\da = q^2 + |b|^2,
\]
we have
\begin{align*}
|S|^2 + (p-2) |S\xi|^2 & = q^2 + 2 |b|^2 + |M|^2 + (p-2)(q^2 + |b|^2)
\\
& = (p-1) q^2 + p |b|^2 +  |M|^2\ge (p-1) q^2 +  |M|^2.
\end{align*}
If we let $\tau = \operatorname{tr}(M)$, then we have $s = q+\tau$, and Newton's inequality gives
\[
\tau^2 \le (n-1) |M|^2.
\]
We thus find
\[
|S|^2 + (p-2) |S\xi|^2 \ge (p-1) q^2 + \frac{\tau^2}{n-1}.
\]
From this estimate it is clear that \eqref{boc} does hold, provided that
\begin{equation}\label{lagmul}
(p-1) q^2 + \frac{\tau^2}{n-1} \ge \frac{1}{n+p-2} \left[q+\tau + (p-2) q\right]^2 = \frac{\left[\tau + (p-1) q\right]^2}{n+p-2}.
\end{equation}
Multiplying both sides of \eqref{lagmul} by $n+p-2 = (n-1) + (p-1)$, the inequality is reduced to
\[
(n-1) q^2 + \frac{1}{n-1}\tau^2 \ge 2 q \tau,
\]
which is of course true. This proves \eqref{boc}.

\end{proof}

\begin{cor}\label{C:boc}
Let $v\in C^2$. Then, at any point where $\nabla v\not=0$, one has with $\xi = \frac{\nabla v}{|\nabla v|}$:
\begin{equation}\label{boc2}
|\nabla^2 v|^2 + (p-2) |\nabla v|^{-2} \sum_{i=1}^n \left(\sum_{j=1}^n \nabla_{ij} v \nabla_j v\right)^2 \ge \frac{1}{n+p-2} (\mathcal L(\xi)v)^2.
\end{equation}
\end{cor}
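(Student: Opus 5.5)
The corollary is a direct pointwise application of Lemma \ref{L:bochner}. Fix a point $x_0$ with $\nabla v(x_0)\neq 0$ and work in normal coordinates (or any orthonormal frame) at $x_0$, so that the metric is $\delta_{ij}$ there. Set $S=\nabla^2 v(x_0)$, which is a symmetric matrix since the Hessian of a $C^2$ function on a Riemannian manifold is symmetric, and set $\xi=\nabla v(x_0)/|\nabla v(x_0)|\in\mathbb S^{n-1}$.

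Next we identify each term of \eqref{boc} with the corresponding term of \eqref{boc2}.
\begin{itemize}
\item The Hilbert--Schmidt norm is $|S|^2=|\nabla^2 v|^2$.
\item The component $(S\xi)_i=\sum_j \nabla_{ij}v\,\nabla_j v/|\nabla v|$, so
\[
|S\xi|^2=|\nabla v|^{-2}\sum_i\Big(\sum_j\nabla_{ij}v\nabla_jv\Big)^2 .
\]
\item Finally,
\[
\operatorname{tr}(S)+(p-2)S(\xi,\xi)=\Delta v+(p-2)|\nabla v|^{-2}\nabla^2v(\nabla v,\nabla v)=\operatorname{tr}(\mathcal A(\xi)\nabla^2 v)=\mathcal L(\xi)v .
\]
\end{itemize}

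Substituting these identities into \eqref{boc} gives exactly \eqref{boc2} at $x_0$. Since $x_0$ was an arbitrary point of $\{\nabla v\neq0\}$, the inequality holds on that whole set.

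There is no real obstacle here. The only point requiring care is that both sides of \eqref{boc2} are expressed invariantly: they are full contractions with the metric. Hence computing them in an orthonormal frame is legitimate, and the dependence on the choice of frame that Lemma \ref{L:bochner} handles by an $\mathbb O(n)$ rotation causes no difficulty.
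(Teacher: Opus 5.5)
Your proposal is correct and matches the paper's own proof: the paper likewise applies Lemma~\ref{L:bochner} with $S=\nabla^2 v$ and $\xi=\nabla v/|\nabla v|$, observing that the bracketed quantity in \eqref{boc} is exactly $\mathcal L(\xi)v$. Your explicit identification of $|S\xi|^2$ and your remark about computing in an orthonormal frame just spell out what the paper leaves implicit.
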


\begin{proof}
It suffices to apply Lemma \ref{L:bochner} with $S = \nabla^2 v$, and observe that the quantity within square brackets in the right-hand side of \eqref{boc} is precisely $\mathcal L(\xi)v$.

\end{proof}

\section{The Bochner identity and the coercivity of the Li-Yau functional}\label{S:proof}

Fix $p>1$ and let $u(x,t)>0$ be a smooth solution of \eqref{0}. All the computations in this section take place at points where $\nabla u \not= 0$, where \eqref{0} coincides with the divergence-form equation \eqref{me}; it is convenient to use the latter form. Suppose that $u$ is a given function and $u=f(v)$. Since the $p$-Laplacian of $u$ is given by the formula
\begin{equation}\label{pilcomp}
\Delta_p u  = f'(v) |f'(v)|^{p-2} \Delta_p v + (p-1) |f'(v)|^{p-2} f''(v) |\nabla v|^p,
\end{equation}
whereas we have
\[
|\nabla u|^{p-2} u_t = f'(v) |f'(v)|^{p-2} |\nabla v|^{p-2} v_t,
\]
it is clear that $v$ satisfies the equation
\begin{equation}\label{me2}
f'(v)  |\nabla v|^{p-2} v_t = f'(v)  \Delta_p v + (p-1)  f''(v) |\nabla v|^p.
\end{equation}
If $u>0$ and $f(v) = e^v$, we obtain from \eqref{me2}
\begin{equation}\label{me3}
|\nabla v|^{p-2} v_t =   \Delta_p v + (p-1)  |\nabla v|^p.
\end{equation}
At points where $\nabla v\not= 0$, we can divide both sides of \eqref{me3} by $|\nabla v|^{p-2}$, obtaining with $a_{ij}(\xi)$ as in \eqref{A} and $\xi = \frac{\nabla v}{|\nabla v|}$:
\begin{equation}\label{me4}
(p-1)  |\nabla v|^2 - v_t = - |\nabla v|^{2-p} \Delta_p v = - a_{ij}(\xi) \nabla_{ij} v = - \mathcal L(\xi) v.
\end{equation}

\begin{dfn}\label{D:P}
We define
\[
w:=(p-1)|\nabla v|^2-v_t,\qquad
F:= t\,w-a,\ \ \ \ \ \ a = \frac{n+p-2}{2(p-1)},
\]
and, at points where $\nabla v \not= 0$,
\[
\xi=\frac{\nabla v}{|\nabla v|},\qquad
L:= \mathcal L(\xi) = a_{ij}(\xi)\nabla_{ij},\qquad
A:=(p-2)\,\frac{\nabla^2v(\xi)-\nabla^2v(\xi,\xi)\,\xi}{|\nabla v|}
+(p-1)\nabla v .
\]
\end{dfn}

Observe that, in view of \eqref{me4}, we have $w = -Lv$ at every point where $\nabla v\not=0$, and that Theorem \ref{T:LY} is equivalent to the assertion $F \le 0$.

\begin{lemma}\label{L:crucial}
Let $u>0$ be a smooth solution to \eqref{0}, and let $v = \log u$. Then, at every point where $\nabla v\neq 0$, one has the exact identity
\begin{equation}\label{crucial}
LF+2\langle \nabla F,A\rangle-F_t
=
2(p-1)t\Bigl(|\nabla^2v|^2+(p-2)|\nabla^2v(\xi)|^2+\Ric(\nabla v,\nabla v)\Bigr)
-\frac{F}{t}-\frac{a}{t}.
\end{equation}
In particular, if $\Ric\ge 0$, then
\[
LF+2\langle \nabla F,A\rangle-F_t
\ge
2(p-1)t\Bigl(|\nabla^2v|^2+(p-2)|\nabla^2v(\xi)|^2\Bigr)
-\frac{F}{t}-\frac{a}{t}.
\]
\end{lemma}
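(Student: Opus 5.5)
Since $F = tw - a$, we have $LF = tLw$, $\nabla F = t\nabla w$ and $F_t = w + t w_t$. Using $w = F/t + a/t$, the left-hand side of \eqref{crucial} equals
\[
t\bigl(Lw + 2\langle\nabla w, A\rangle - w_t\bigr) - \frac{F}{t} - \frac{a}{t}.
\]
It therefore suffices to prove the $t$-free identity
\[
Lw + 2\langle \nabla w,A\rangle - w_t = 2(p-1)\bigl(|\nabla^2 v|^2 + (p-2)|\nabla^2 v(\xi)|^2 + \Ric(\nabla v,\nabla v)\bigr)
\]
at points where $\nabla v\neq0$, for $w = (p-1)|\nabla v|^2 - v_t$. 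The last claim of the lemma then follows by dropping the Ricci term.

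The time derivative is handled through the equation. By \eqref{me4}, $w = -Lv = -a_{ij}(\xi)v_{ij}$, and $v_t = a_{ij}(\xi)v_{ij} + (p-1)|\nabla v|^2$. Differentiating in $t$ gives
\[
w_t = -a_{ij}v_{ijt} - (p-2)\,\partial_t(\xi_i\xi_j)\,v_{ij}.
\]
The second term is rewritten with $\partial_t \xi = (\nabla v_t - \langle\nabla v_t,\xi\rangle\xi)/|\nabla v|$ and $\nabla v_t = -\nabla w + (p-1)\nabla|\nabla v|^2$. This gives
\[
-(p-2)\,\partial_t(\xi_i\xi_j)v_{ij} = -\frac{2(p-2)}{|\nabla v|}\bigl\langle \nabla^2 v(\xi) - \nabla^2v(\xi,\xi)\xi,\ \nabla v_t\bigr\rangle,
\]
so this term produces part of the drift $2\langle\nabla w, A\rangle$, namely the piece involving $(p-2)$, plus terms involving $\nabla|\nabla v|^2$.

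For the spatial part I compute $Lw = (p-1)L|\nabla v|^2 - Lv_t$. The first term is handled by the Bochner formula in the form $a_{ij}\nabla_{ij}|\nabla v|^2 = 2a_{ij}v_{ki}v_{kj} + 2v_k a_{ij}v_{ijk} + 2(p-2)\xi_i\xi_jR\text{-terms}$. Commuting third derivatives gives $a_{ij}v_{ijk}=\nabla_k(Lv) - (\nabla_k a_{ij})v_{ij}$ plus curvature terms $\Ric(\nabla v,\nabla v)$ from the trace part. In the $(p-2)\xi\otimes\xi$ part the curvature term is $R(\nabla v,\xi,\nabla v,\xi)$-type and vanishes by antisymmetry since $\xi\parallel\nabla v$. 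Also $2a_{ij}v_{ki}v_{kj} = 2(|\nabla^2v|^2 + (p-2)|\nabla^2v(\xi)|^2)$, which is exactly the Hessian term on the right-hand side. Then $\nabla(Lv) = -\nabla w$ yields the term $-2(p-1)\langle\nabla v,\nabla w\rangle$, which is the $(p-1)\nabla v$ piece of the drift, up to sign conventions that I will track.

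The main obstacle is the set of terms from differentiating $\xi$, namely $(\nabla_k a_{ij})v_{ij}$ and $\partial_t(\xi_i\xi_j)v_{ij}$, together with the $\nabla|\nabla v|^2 = 2\nabla^2v(\nabla v)$ contributions. These are cubic in the Hessian divided by $|\nabla v|$. The claim is that they cancel exactly. I would verify this in an orthonormal frame at the point with $e_1=\xi$, writing $\nabla_k\xi = P^\perp \nabla^2v(e_k)/|\nabla v|$ and expanding everything in $v_{11}$, $b=(v_{1j})_{j\ge2}$ and $M$. The $t$-derivative term contributes $-\frac{2(p-2)(p-1)}{|\nabla v|}\langle P^\perp\nabla^2v(\xi),\nabla|\nabla v|^2\rangle = -4(p-2)(p-1)\langle b, v_{11}b+\dots\rangle$-type terms. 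These should be matched against $2(p-1)v_k(\nabla_k a_{ij})v_{ij} = 4(p-1)(p-2)\langle \nabla_{\nabla v}\xi,\nabla^2v(\xi)\rangle$. Both equal $4(p-1)(p-2)\langle P^\perp\nabla^2v(\xi),\nabla^2 v(\xi)\rangle$ with opposite signs, so they cancel. I expect the bookkeeping of this cancellation, in particular checking that no leftover $|b|^2$-term remains, to be the delicate point, and I would cross-check it in flat $\Rn$ with a quadratic $v$.
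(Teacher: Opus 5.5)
Your proposal is correct and follows essentially the paper's route: reduce to the $t$-free identity, get $Lw-w_t=(p-1)L|\nabla v|^2+(p-2)\,\partial_t(\xi_i\xi_j)v_{ij}$ (the terms $Lv_t$ and $a_{ij}\nabla_{ij}v_t$ cancel because the metric is time independent, which you should say explicitly), then see that the two cubic terms $\mp4(p-1)(p-2)\bigl(|\nabla^2v(\xi)|^2-\nabla^2v(\xi,\xi)^2\bigr)$ cancel exactly while the first-order pieces assemble into $-2\langle\nabla w,A\rangle$. The only slip is the intermediate expression $\langle b,v_{11}b+\dots\rangle$, which should read $|b|^2=|\nabla^2v(\xi)|^2-\nabla^2v(\xi,\xi)^2$; the cancellation is exact, with no leftover $|b|^2$ term.
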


\begin{proof}
Fix a point $(x_0,t_0)$ with $\nabla v(x_0,t_0)\neq 0$. Since the statement is tensorial, we may work at $(x_0,t_0)$ in a local orthonormal frame that is geodesic at $x_0$. Throughout the proof we write
\[
s:=|\nabla v|^2,\qquad \xi=\frac{\nabla v}{|\nabla v|},\qquad
B:=\frac{\nabla^2v(\xi)-\nabla^2v(\xi,\xi)\,\xi}{|\nabla v|}.
\]

We start from
\[
F=t\,w-a,
\qquad\text{so that}\qquad
\nabla F=t\,\nabla w,\quad
LF=t\,Lw,\quad
F_t=w+t\,w_t.
\]
Therefore
\[
LF+2\langle \nabla F,A\rangle-F_t
=
t\bigl(Lw-w_t+2\langle \nabla w,A\rangle\bigr)-w,
\]
and it remains to compute $Lw-w_t+2\langle \nabla w,A\rangle$. Since
$w=(p-1)s-v_t$, we have
\begin{equation}\label{split-classical}
Lw-w_t
=
(p-1)\bigl(Ls-s_t\bigr)-\bigl(Lv_t-(v_t)_t\bigr),
\end{equation}
and we compute the two pieces separately.

\medskip
\noindent
\textbf{1. The Bochner-type identity for $s :=|\nabla v|^2$.}
At the chosen point,
\[
s_i=2v_kv_{ki},\qquad
s_{ij}=2v_{ki}v_{kj}+2v_kv_{kij}.
\]
Hence
\[
\frac12\,Ls
=
a^{ij}v_{ki}v_{kj}+v_k\,a^{ij}v_{kij},
\]
where we have abbreviated $a^{ij} = a_{ij}(\xi)$. The first term is immediate:
\[
a^{ij}v_{ki}v_{kj}
=
|\nabla^2v|^2+(p-2)|\nabla^2v(\xi)|^2.
\]
For the second term, commute covariant derivatives on the 1-form $\nabla v$:
\[
v_{kij}=v_{ijk}+R_{k\ell ij}v_\ell.
\]
Since $a^{ij}$ is symmetric in $i,j$, the curvature contribution coming from the $(p-2)\xi_i\xi_j$ part vanishes by the skew-symmetry of the Riemann tensor in $i, j$, while the trace part produces the Ricci term:
\[
v_k\,\delta^{ij}R_{k\ell ij}v_\ell=\Ric(\nabla v,\nabla v).
\]
Moreover,
\[
v_k\,a^{ij}v_{ijk}
=
\langle \nabla(Lv),\nabla v\rangle
-
v_k\,a^{ij}{}_{,k}\,v_{ij}.
\]
Now
\[
a^{ij}{}_{,k}=(p-2)(\xi^i{}_{,k}\xi^j+\xi^i\xi^j{}_{,k}),
\qquad
\nabla_k \xi_i = \frac{v_{ki}}{|\nabla v|} - \frac{v_i \, v_{k\ell}v_\ell}{|\nabla v|^3},
\]
and since $v_{ij}=v_{ji}$,
\[
v_k\,a^{ij}{}_{,k}\,v_{ij}
=
2(p-2)\,\Bigl\langle \nabla^2v(\xi)-\nabla^2v(\xi,\xi)\,\xi,\;\nabla^2v(\xi)\Bigr\rangle
=
2(p-2)\bigl(|\nabla^2v(\xi)|^2-|\nabla^2v(\xi,\xi)|^2\bigr).
\]
Therefore, using $w = -Lv$, and thus $\langle \nabla(Lv),\nabla v\rangle=-\langle \nabla w,\nabla v\rangle$, we obtain
\begin{equation}\label{Ls-classical}
\frac12\,Ls
=
|\nabla^2v|^2+(p-2)|\nabla^2v(\xi)|^2+\Ric(\nabla v,\nabla v)
-\langle \nabla w,\nabla v\rangle
-2(p-2)\bigl(|\nabla^2v(\xi)|^2-|\nabla^2v(\xi,\xi)|^2\bigr).
\end{equation}

\medskip
\noindent
\textbf{2. The time commutator.}
We differentiate the equation $v_t=Lv+(p-1)s$ with respect to $t$. Since the metric is time independent, $\partial_t$ commutes with covariant differentiation, and the only time dependence in the coefficients of $L=a^{ij}(\xi)\nabla_{ij}$ comes from $\xi$. We thus obtain
\[
(v_t)_t = (p-2)(\xi^i\xi^j)_t\, v_{ij} + L(v_t) + (p-1) s_t,
\]
that is,
\begin{equation}\label{Lvt-classical}
Lv_t-(v_t)_t
=
-\,(p-2)(\xi^i\xi^j)_t\, v_{ij} - (p-1)\, s_t .
\end{equation}
Using
\[
\xi_t
=
\frac{\nabla v_t}{|\nabla v|}
-
\frac{\langle \nabla v_t,\xi\rangle}{|\nabla v|}\,\xi,
\]
we find
\[
(p-2)(\xi^i\xi^j)_t v_{ij}
=
2(p-2)\left\langle \frac{\nabla v_t}{|\nabla v|},\,\nabla^2v(\xi)-\nabla^2v(\xi,\xi)\,\xi\right\rangle.
\]
Now differentiate
$v_t=Lv+(p-1)|\nabla v|^2$
in space to obtain
\[
\nabla v_t
=
\nabla(Lv)+2(p-1)\nabla^2v(\nabla v)
=
-\nabla w+2(p-1)\nabla^2v(\nabla v).
\]
Since $\nabla^2v(\nabla v)=|\nabla v|\,\nabla^2v(\xi)$, this yields
\begin{align}\label{axi-t}
(p-2)(\xi^i\xi^j)_t v_{ij}
& =
2(p-2)\bigg(
2(p-1)\bigl(|\nabla^2v(\xi)|^2-|\nabla^2v(\xi,\xi)|^2\bigr)
\\
& -\left\langle \frac{\nabla w}{|\nabla v|},\,\nabla^2v(\xi)-\nabla^2v(\xi,\xi)\,\xi\right\rangle
\bigg).
\notag
\end{align}

\medskip
\noindent
\textbf{3. Combining the two pieces.}
Substituting \eqref{Lvt-classical} into \eqref{split-classical}, the two occurrences of $(p-1)s_t$ cancel, and we are left with
\[
Lw-w_t = (p-1)\, Ls + (p-2)(\xi^i\xi^j)_t\, v_{ij}.
\]
Inserting \eqref{Ls-classical} and \eqref{axi-t}, the quadratic terms $\pm\, 4(p-1)(p-2)\bigl(|\nabla^2v(\xi)|^2-|\nabla^2v(\xi,\xi)|^2\bigr)$ cancel exactly, and we obtain
\begin{align*}
Lw-w_t
&=
2(p-1)\Bigl(|\nabla^2v|^2+(p-2)|\nabla^2v(\xi)|^2+\Ric(\nabla v,\nabla v)\Bigr)
-2\langle \nabla w,(p-2)B+(p-1)\nabla v\rangle \\
&=
2(p-1)\Bigl(|\nabla^2v|^2+(p-2)|\nabla^2v(\xi)|^2+\Ric(\nabla v,\nabla v)\Bigr)
-2\langle \nabla w,A\rangle .
\end{align*}
Multiplying by $t$, using $\nabla F=t\,\nabla w$ and $w=(F+a)/t$, we conclude
\[
LF+2\langle \nabla F,A\rangle-F_t
=
2(p-1)t\Bigl(|\nabla^2v|^2+(p-2)|\nabla^2v(\xi)|^2+\Ric(\nabla v,\nabla v)\Bigr)
-\frac{F}{t}-\frac{a}{t},
\]
as claimed.
\end{proof}

The next result provides the crucial coercivity property that is needed in the proof of Theorem \ref{T:LY}.

\begin{cor}\label{C:coercive}
With $F$ and $A$ as in Definition \ref{D:P}, at every point where $\nabla v \not= 0$ one has
\begin{equation}\label{coercive}
L F + 2 \langle \nabla F, A\rangle - F_t \geq \frac{2(p-1)}{n+p-2}\, \frac{F(F+a)}{t}
+ 2(p-1)t\,\Ric(\nabla v,\nabla v).
\end{equation}
In particular, if $\Ric \ge 0$, at every point where $\nabla v\not=0$ and $F\ge 0$,
\[
L F + 2 \langle \nabla F, A\rangle - F_t \geq  \frac{2(p-1)}{n+p-2} \frac{F^2}{t}.
\]
\end{cor}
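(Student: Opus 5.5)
The plan is to combine the exact identity of Lemma \ref{L:crucial} with the generalized Newton inequality of Corollary \ref{C:boc}. We keep the Ricci term of \eqref{crucial} untouched and estimate only the Hessian bracket from below.

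\textbf{Step 1: Newton bound.} Fix a point where $\nabla v\neq 0$. Corollary \ref{C:boc} gives
\[
|\nabla^2v|^2+(p-2)|\nabla^2v(\xi)|^2\ge \frac{(Lv)^2}{n+p-2}.
\]
Here we use $|\nabla^2 v(\xi)|^2 = |\nabla v|^{-2}\sum_i(\sum_j v_{ij}v_j)^2$.

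\textbf{Step 2: express $(Lv)^2$ through $F$.} By \eqref{me4}, $Lv=-w$. Moreover $tw=F+a$. Hence
\[
(Lv)^2=w^2=\frac{(F+a)^2}{t^2}.
\]

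\textbf{Step 3: insert into \eqref{crucial}.} This yields
\[
LF+2\langle\nabla F,A\rangle-F_t\ \ge\ \frac{2(p-1)}{n+p-2}\,\frac{(F+a)^2}{t}-\frac{F+a}{t}+2(p-1)t\,\Ric(\nabla v,\nabla v).
\]

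\textbf{Step 4: simplify using the choice of $a$.} Since $a=\frac{n+p-2}{2(p-1)}$, we have $\frac{2(p-1)}{n+p-2}=\frac1a$. The first two terms therefore combine as
\[
\frac{F+a}{t}\Big(\frac{F+a}{a}-1\Big)=\frac{F(F+a)}{a\,t}=\frac{2(p-1)}{n+p-2}\,\frac{F(F+a)}{t}.
\]
This is exactly \eqref{coercive}.

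\textbf{Step 5: the special case.} Assume $\Ric\ge0$ and $F\ge0$. Then the Ricci term can be dropped, and since $a>0$ we have $F(F+a)\ge F^2$. This gives the last inequality of the statement.

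The only place where care is needed is Step 4: the value of $a$ is precisely what makes the linear terms collapse into the product $F(F+a)$. Everything else is a direct substitution into results already established, so there is no real obstacle.
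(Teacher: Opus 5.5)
Your proposal is correct and follows the paper's proof essentially verbatim: apply the Newton inequality of Corollary~\ref{C:boc} with $Lv=-w=-(F+a)/t$, substitute into \eqref{crucial} (multiplying by $2(p-1)t>0$), and collapse the linear terms via $\frac{2(p-1)}{n+p-2}\,a=1$. The special case then follows, as in the paper, from dropping the Ricci term and using $F(F+a)\ge F^2$ when $F\ge 0$.
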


\begin{proof}
By Corollary \ref{C:boc} applied with $S = \nabla^2v$ and $\xi = \nabla v/|\nabla v|$, and by \eqref{me4}, which gives $\mathcal L(\xi) v = Lv = -w$, we obtain
\[
|\nabla^2v|^2+(p-2)|\nabla^2v(\xi)|^2
\ge
\frac{1}{n+p-2}\,w^2 = \frac{1}{n+p-2}\,\frac{(F+a)^2}{t^2}.
\]
Substituting into \eqref{crucial} and using $\frac{2(p-1)}{n+p-2}\,a=1$, we find
\begin{align*}
& LF+2\langle \nabla F,A\rangle-F_t
\\
& \ge
\frac{2(p-1)}{n+p-2}\frac{(F+a)^2}{t}
-\frac{F+a}{t}
+ 2(p-1)t\,\Ric(\nabla v,\nabla v)
\\
& =
\frac{2(p-1)}{n+p-2}\,\frac{F(F+a)}{t} + 2(p-1)t\,\Ric(\nabla v,\nabla v).
\end{align*}
The second statement follows since $F+a \ge F \ge 0$.

\end{proof}

Corollary \ref{C:coercive} immediately yields Theorem \ref{T:LY} \emph{under a nondegeneracy assumption}. Suppose indeed that, for some $T > 0$ and $\nu>0$, the function 
\[
Q_\nu := F - \nu t
\]
attains a positive maximum at a point $(x_0,t_0) \in \bM\times(0,T]$ (since $F(\cdot,0) = -a<0$, we must have $t_0>0$, and the case $t_0 = T$ is included, the relevant time derivative being one-sided). Then
\[
\nabla F(x_0,t_0) = 0, \qquad F_t(x_0,t_0) \ge \nu, \qquad \nabla^2F(x_0,t_0)\le 0.
\]
\emph{If, in addition, $\nabla v(x_0,t_0)\not=0$}, then $L F(x_0,t_0) \le 0$ by \eqref{ell}, and Corollary \ref{C:coercive} gives at $(x_0,t_0)$, keeping in mind that $F(x_0,t_0) > \nu t_0 > 0$,
\[
0 \ge LF - F_t + \nu = LF + 2\sa \nabla F, A\da - F_t + \nu \ge \frac{2(p-1)}{n+p-2} \frac{F^2}{t_0} + \nu > 0,
\]
a contradiction. Letting $\nu \to 0$ would give $F \le 0$, which is \eqref{LY}. The problem with this argument is that there is no a priori reason why the maximum point should not belong to the critical set $\{\nabla v = 0\}$, on which the operator $L$, the drift $A$, and the identity \eqref{crucial} are all undefined. The next section is devoted to the removal of this obstruction.

\section{Removal of the nondegeneracy assumption and proof of Theorem \ref{T:LY}}\label{S:critical}

In this section we remove the nondegeneracy assumption $\nabla v(x_0,t_0)\not=0$ from the maximum principle argument of Section \ref{S:proof}, thus completing the proof of Theorem \ref{T:LY}. We first state, in Definition \ref{D:visc}, the viscosity interpretation of \eqref{0} announced in the introduction, we record in \eqref{visc-flat} the equivalent formulation that we shall actually use, and we then justify, in Remark \ref{R:formulation}, the claim made in the introduction that this interpretation cannot be dispensed with.

In what follows, given $q\in \Rn\setminus\{0\}$, we let $\hat q := q/|q|$, and consider the function $F: (T_x\bM\setminus\{0\})\times \operatorname{Sym}(T_x\bM)$ given by
\[
F(q,X) := \operatorname{tr}\big(\mathcal A(\hat q)\,X\big)
= \operatorname{tr}X + (p-2)\,X(\hat q,\hat q), 
\]
with $\mathcal A$ as in \eqref{A}. We respectively denote by $F^*$, $F_*$ the upper and the lower semicontinuous envelope of $F$, extended to $q = 0$.
We note that in this notation we can write \eqref{0} as 
\[
u_t = F(\nabla u, \nabla^2u).
\]

\begin{dfn}\label{D:visc}
A function $u \in \mathrm{USC}(\bM\times(0,T))$ is a viscosity subsolution of \eqref{0} if, whenever $\varphi \in C^{2,1}$ touches $u$ from above at $(x_0,t_0)$,
\[
\varphi_t(x_0,t_0) \;\le\; F^*\big(\nabla\varphi(x_0,t_0),\, \nabla^2\varphi(x_0,t_0)\big);
\]
supersolutions are defined symmetrically, with $u\in \mathrm{LSC}(\bM\times(0,T))$, the reverse inequality and $F_*$, and a viscosity solution is a continuous function which is both.
\end{dfn}

The following three properties of Definition \ref{D:visc} will be used repeatedly; each is elementary or classical, and we indicate the reason for it.

\begin{itemize}
\item[(a)] \emph{(The envelopes at a critical point.)} For every $X\in \operatorname{Sym}(T_x\bM)$,
\begin{equation}\label{envelopes}
F^*(0,X) = \operatorname{tr}X + (p-2)\lambda_{\max}(X), \qquad
F_*(0,X) = \operatorname{tr}X + (p-2)\lambda_{\min}(X)
\end{equation}
when $p \ge 2$, the roles of $\lambda_{\max}$ and $\lambda_{\min}$ being interchanged when $1<p<2$. Indeed, along any sequence $q_k \to 0$, $q_k \ne 0$, the quantity $X(\hat q_k,\hat q_k)$ has as its set of limit points exactly the interval $[\lambda_{\min}(X),\lambda_{\max}(X)]$. Thus at a critical point of the test function the quotient $\nabla^2\varphi(\nabla\varphi,\nabla\varphi)/|\nabla\varphi|^2$ is replaced by that interval.

\item[(b)] \emph{(Admissibility of the operator.)} Since
\[
\big|(p-2)\,X(\hat q,\hat q)\big| \;\le\; |p-2|\,\|X\| \;\longrightarrow\; 0
\qquad \text{as } X \to 0, \text{ uniformly in } q \neq 0,
\]
one has $F^*(0,0) = F_*(0,0) = 0$, and \eqref{0} satisfies the structural conditions -- degenerate ellipticity, continuity off $\{q = 0\}$, and the coincidence just displayed -- under which the classical theory of singular parabolic equations of Ishii-Souganidis \cite{IS}, \cite[Chapter 3]{Gi}, applies, by means of the semicontinuous envelopes $F^*, F_*$. For the adaptation of the notion to Riemannian manifolds see \cite[Section 2]{LTW}, and for \eqref{0} in particular \cite{Do, BG, JS}.

\item[(c)] \emph{(Reduction to vanishing-Hessian test functions.)} By \cite[Section 2]{IS} and \cite[Section 2]{JS}, Definition \ref{D:visc} is unchanged if one restricts the test functions at a degenerate contact point to those with $\nabla\varphi(x_0,t_0) = 0$ \emph{and} $\nabla^2\varphi(x_0,t_0) = 0$ -- for instance $\varphi = f(d(x,x_0)) + \psi(t)$ with $f$ vanishing to fourth order -- in which case, by \eqref{envelopes}, the sub- and supersolution conditions become simply
\begin{equation}\label{visc-flat}
\varphi_t(x_0,t_0) \le 0, \qquad \text{respectively} \qquad \varphi_t(x_0,t_0) \ge 0 .
\end{equation}
This is the formulation we shall use in the passage to the limit of Section \ref{SS:limit}: it requires no information on the direction $\nabla\varphi/|\nabla\varphi|$ at the contact point.
\end{itemize}

\noindent For $u \in C^2$ with $\nabla u \neq 0$, Definition \ref{D:visc} coincides with the pointwise reading of \eqref{0}, and hence of \eqref{me}. On the critical set, however, the two equations part ways, and \eqref{me} is genuinely deficient:

\begin{rmrk}\label{R:formulation}
Let $p>2$, $K>0$, and consider on $\Rn\times(0,\infty)$ the function $u(x,t) = e^{-Kt}$. Since $\nabla u \equiv 0$, both sides of \eqref{me} vanish identically, so that $u$ is a smooth, positive solution of \eqref{me} in the literal pointwise sense. On the other hand, with $v = \log u$,
\[
t\left((p-1)|\nabla v|^2 - v_t\right) = Kt \underset{t \to \infty}{\longrightarrow} \ \infty,
\]
so that \eqref{LY} fails for $t$ large. By contrast, $u = e^{-Kt}$ is \emph{not} a viscosity solution of \eqref{0} in the sense of Definition \ref{D:visc}: testing with $\varphi = u$, which has $\nabla\varphi \equiv 0$ and $\nabla^2\varphi \equiv 0$, the condition \eqref{visc-flat} forces $\varphi_t = 0$. Thus no conclusion of Li-Yau type can hold for \eqref{me} read literally, whereas, as we prove in this section, the sharp inequality \eqref{LY} does hold for the viscosity solutions of \eqref{0}.
\end{rmrk}

\subsection{The approximating flows}\label{SS:appflows}

Henceforth, $s := |\nabla v|^2$, and for $\ve>0$ we let
\begin{equation}\label{sigma}
\sigma := s + \ve^2, \qquad \beta := \frac{s}{\sigma} \in [0,1).
\end{equation}
We define the regularized coefficients
\begin{equation}\label{Aeps}
a^{ij}_\ve = a^{ij}_\ve(\nabla v) := g^{ij} + (p-2)\, \frac{\nabla^i v\, \nabla^j v}{\sigma},
\qquad
L_\ve \phi := a^{ij}_\ve \nabla_{ij} \phi .
\end{equation}
Since the eigenvalues of $a^{ij}_\ve$ are $1$ (with multiplicity $n-1$) and
$1+(p-2)\beta$, and $\beta\in[0,1)$, we have, \emph{uniformly in} $\ve$,
\begin{equation}\label{unif-ell}
\min\{1,p-1\}\, |\zeta|^2 \;\le\; a^{ij}_\ve \zeta_i \zeta_j \;\le\; \max\{1,p-1\}\, |\zeta|^2,
\qquad \zeta \in T_x\bM.
\end{equation}
Unlike the coefficients $a_{ij}(\xi)$ in \eqref{A}, the functions
$q \mapsto a^{ij}_\ve(q)$ are smooth on the whole tangent space, including
$q = 0$.

Next, we choose the first-order term. Let
\begin{equation}\label{deltaK}
\delta := \frac{p-1}{4}, \qquad K := \frac{2(p-1)(2-p)_+}{n+p-2},
\end{equation}
so that $K = 0$ when $p \ge 2$. We define $h_\ve \in C^\infty([0,\infty))$ by
\begin{equation}\label{heps}
h_\ve(s) := (p-1)s + K \int_0^s \left(\frac{\ve^2}{\tau + \ve^2}\right)^{\delta} d\tau
= (p-1)s + \frac{K\,\ve^{2\delta}}{1-\delta}\left[(s+\ve^2)^{1-\delta} - \ve^{2(1-\delta)}\right].
\end{equation}
Thus $h_\ve(s) = (p-1)s$ when $p\ge 2$, whereas for $1<p<2$
\begin{equation}\label{heps-props}
h_\ve'(s) = (p-1) + K \left(\frac{\ve^2}{\sigma}\right)^{\delta},
\qquad
h_\ve''(s) = -\,\delta K\, \frac{\ve^{2\delta}}{\sigma^{\delta+1}},
\end{equation}
so that 
\[
p-1 \le h_\ve' \le p-1+K,\ \ \ \ \ h_\ve'' \le 0,
\]
and
\begin{equation}\label{heps-conv}
0 \;\le\; h_\ve(s) - (p-1)s \;\le\; \frac{K}{1-\delta}\, \ve^{2\delta}\,(s+\ve^2)^{1-\delta}
\;\underset{\ve\to 0}{\longrightarrow}\; 0
\qquad \text{locally uniformly in } s.
\end{equation}

We consider the approximating flow on $\bM \times (0,T]$:
\begin{equation}\label{Eeps}
\boxed{v_t = L_\ve v + h_\ve(|\nabla v|^2).}
\end{equation}
Note that, in view of \eqref{unif-ell}, the equation \eqref{Eeps} is a
quasilinear, \emph{uniformly parabolic} equation (uniformly in $\ve$), with
coefficients depending smoothly on $\nabla v$; there is no critical-point issue of
any kind. 

We explicitly note that, in terms of $u = e^v$, \eqref{Eeps} reads
\begin{equation}\label{Eeps-u}
u_t = \Delta u + (p-2)\,\frac{\nabla^2u(\nabla u,\nabla u) - |\nabla u|^4/u}{|\nabla u|^2 + \ve^2 u^2}
+ (p-2)\frac{|\nabla u|^2}{u} + u \left[h_\ve\left(\tfrac{|\nabla u|^2}{u^2}\right) - (p-1)\tfrac{|\nabla u|^2}{u^2}\right],
\end{equation}
where the regularizing term $\ve^2 u^2$ is invariant under the multiplications $u \mapsto \lambda u$, $\lambda > 0$ : this is the natural invariance here, since the equation \eqref{Eeps} sees only $v = \log u$. By \eqref{heps-conv} it is easy to verify that as
$\ve \to 0$ the equation \eqref{Eeps-u} formally converges, at points where
$\nabla u \not=0$, to the normalized equation \eqref{0}.

In analogy with Definition \ref{D:P}, we set
\begin{equation}\label{Feps-def}
w_\ve := h_\ve(|\nabla v|^2) - v_t \;=\; -L_\ve v,
\qquad
F_\ve := t\, w_\ve - a, \qquad a = \frac{n+p-2}{2(p-1)},
\end{equation}
the identity $w_\ve = -L_\ve v$ being simply a rewriting of \eqref{Eeps}.
Observe that, since $h_\ve(s) \ge (p-1)s$, we always have
\begin{equation}\label{Feps-dominates}
F_\ve \;\ge\; t\big((p-1)|\nabla v|^2 - v_t\big) - a,
\end{equation}
so that an upper bound on $F_\ve$ implies the corresponding Li-Yau bound.

\begin{rmrk}\label{R:1993}
The flows \eqref{Eeps} are instances of the general framework introduced in
the notes \cite{Ga}, where one considers quasilinear parabolic equations of
the form
\[
a_{ij}(\nabla u)\, \nabla_{ij} u = f(u) + \Phi'(|\nabla u|^2)\, u_t,
\qquad
a_{ij}(\nabla u) = 2\Phi''(|\nabla u|^2)\, u_iu_j + \Phi'(|\nabla u|^2)\,\delta_{ij}
\]
(which contain \eqref{me} for $\Phi'(s) = s^{\frac{p-2}2}$, $f = 0$), one
normalizes the second-order coefficients by the structure function
$\Lambda(s) = 2s\,\Phi''(s) + \Phi'(s)$, and one runs the maximum principle
on a functional $P = \Psi(|\nabla u|^2) - 2F(u)$ whose scalar nonlinearity $\Psi$
is \emph{adapted to the operator} (in \cite{Ga}, $\Psi' = \Lambda$), the
choice being dictated by the requirement that, after the first-order
conditions $\nabla P = 0$, $P_t = 0$ are enforced at the maximum point, the
quadratic terms of highest order cancel exactly. The choice of the first-order
term $h_\ve$ in \eqref{heps}, adapted to the regularized coefficients
$a^{ij}_\ve$ so that the remainder in Lemma \ref{L:bochner-eps} below
either vanishes identically ($p \ge 2$) or is absorbed with the sharp
constant ($1<p<2$), follows the same principle: the second-order
coefficients and the scalar structure functions are treated as decoupled
objects, to be matched at the end. In this sense, the method of the present
section implements, in the Li-Yau setting, the scheme proposed in
\cite{Ga}.
\end{rmrk}

\subsection{The Bochner identity for the flow \eqref{Eeps}}

The following lemma replaces Lemma \ref{L:crucial}. Its crucial feature is
that it is an \emph{exact identity valid at every point} of
$\bM\times(0,T]$, with no nondegeneracy assumption whatsoever: all the
quantities involved are smooth, since by \eqref{sigma} we have $\sigma \ge \ve^2 > 0$. In the
statement, $h$ denotes any smooth function on $[0,\infty)$; we will apply it
with $h = h_\ve$.

\begin{lemma}\label{L:bochner-eps}
Let $\ve>0$, let $h\in C^\infty([0,\infty))$, and let $v$ be a smooth
solution of $v_t = L_\ve v + h(|\nabla v|^2)$ on $\bM\times(0,T]$. With $w := -L_\ve v = h(s) - v_t$, define
\begin{equation}\label{QB-def}
\mathcal Q_\ve := a^{ij}_\ve\, \nabla_{ki} v\, \nabla_{kj} v
= |\nabla^2v|^2 + (p-2)\,\frac{|\nabla^2v(\nabla v)|^2}{\sigma},
\qquad
B_\ve := \frac{\nabla^2v(\nabla v)}{\sigma} - \frac{\nabla^2v(\nabla v,\nabla v)}{\sigma^2}\, \nabla v,
\end{equation}
and the (smooth) drift field
\begin{equation}\label{Aeps-drift}
A_\ve := h'(s)\, \nabla v + (p-2)\, B_\ve .
\end{equation}
Then, at every point of $\bM\times(0,T]$,
\begin{equation}\label{bochner-eps}
L_\ve w - w_t
= 2h'(s)\Big[\mathcal Q_\ve + \Ric(\nabla v,\nabla v)\Big]
+ h''(s)\, a^{ij}_\ve\, s_i s_j
- 2\big\langle \nabla w, A_\ve \big\rangle .
\end{equation}
\end{lemma}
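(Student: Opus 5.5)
The plan is to repeat the three-step computation in the proof of Lemma \ref{L:crucial}, now with the smooth coefficients $a^{ij}_\ve(\nabla v)$ in place of $a_{ij}(\xi)$. Since $\sigma\ge\ve^2>0$, every quantity is smooth at every point, so no nondegeneracy assumption is needed. I work at a point in a local orthonormal frame that is geodesic there.

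The first step is a single structural formula for the derivative of the coefficients. For $q=\nabla v$ and a variation $\eta$ of $q$,
\[
D_q a^{ij}_\ve[\eta] = (p-2)\Big[\frac{\eta^i q^j + q^i\eta^j}{\sigma} - \frac{2\langle q,\eta\rangle\, q^iq^j}{\sigma^2}\Big].
\]
Contracting with $\nabla_{ij}v$ and using its symmetry gives
\[
D_q a^{ij}_\ve[\eta]\,\nabla_{ij}v = 2(p-2)\,\langle \eta, B_\ve\rangle .
\]
This identity plays the role of the formulas for $\nabla_k\xi$ and $\xi_t$ in Section \ref{S:proof}. I will use it twice: with $\eta=\nabla_k\nabla v$ for spatial derivatives and with $\eta=\nabla v_t$ for the time derivative.

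The second step splits $w=h(s)-v_t$ and uses the chain rule $L_\ve h(s)=h'(s)L_\ve s+h''(s)a^{ij}_\ve s_is_j$. This gives
\[
L_\ve w-w_t = h'(s)\big(L_\ve s - s_t\big) + h''(s)\,a^{ij}_\ve s_is_j - \big(L_\ve v_t - v_{tt}\big).
\]
\begin{itemize}
\item Differentiating \eqref{Eeps} in $t$ gives $v_{tt}=L_\ve v_t+2(p-2)\langle\nabla v_t,B_\ve\rangle+h'(s)s_t$. The $h'(s)s_t$ terms cancel, exactly as the $(p-1)s_t$ terms did before, leaving
\[
L_\ve w-w_t = h'L_\ve s + h''a^{ij}_\ve s_is_j + 2(p-2)\langle \nabla v_t, B_\ve\rangle .
\]
\item For $\tfrac12 L_\ve s$, I expand to $\mathcal Q_\ve + v_k a^{ij}_\ve v_{kij}$ and commute derivatives. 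The $(p-2)$ part of $a^{ij}_\ve$ is symmetric, so it is killed by the skew-symmetry of $R_{k\ell ij}$ in $i,j$, and only $\Ric(\nabla v,\nabla v)$ survives.
\item Then $v_k a^{ij}_\ve v_{ijk} = \langle\nabla(L_\ve v),\nabla v\rangle - v_k\,\nabla_k(a^{ij}_\ve)\,v_{ij}$. By the first step this equals $-\langle\nabla w,\nabla v\rangle - 2(p-2)\langle\nabla^2v(\nabla v),B_\ve\rangle$.
\end{itemize}

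The third step is the combination, and the only delicate point is the exact cancellation of the quadratic terms. Differentiating \eqref{Eeps} in space gives
\[
\nabla v_t = -\nabla w + \nabla h(s) = -\nabla w + 2h'(s)\,\nabla^2v(\nabla v).
\]
Hence the time commutator contributes $+4(p-2)h'(s)\langle\nabla^2v(\nabla v),B_\ve\rangle$. The Bochner part contributes $h'(s)\cdot 2\cdot\big(-2(p-2)\langle\nabla^2v(\nabla v),B_\ve\rangle\big)$, and the two cancel.

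I expect this matching to be the main obstacle. It works for arbitrary $h$ only because the same factor $h'(s)$ appears both in front of $L_\ve s$ and inside $\nabla h(s)$. One must check that no stray $\sigma$-weights appear, which holds because $B_\ve$ absorbs all of them through the single formula in the first step.

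What remains is
\[
2h'(s)\big[\mathcal Q_\ve+\Ric(\nabla v,\nabla v)\big] + h''(s)\,a^{ij}_\ve s_is_j - 2h'(s)\langle\nabla w,\nabla v\rangle - 2(p-2)\langle\nabla w,B_\ve\rangle .
\]
This is \eqref{bochner-eps}, with the last two terms combining to $-2\langle\nabla w,A_\ve\rangle$ by the definition \eqref{Aeps-drift}.
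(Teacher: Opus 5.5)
Your proposal is correct and follows the paper's proof essentially step for step: the Bochner term for $s$, the time commutator obtained by differentiating the equation in $t$, and the assembly in which $\pm 4(p-2)h'(s)\langle \nabla^2v(\nabla v),B_\ve\rangle$ (the paper's $\pm 4(p-2)h'(s)\mathcal P_\ve$) cancels, with your single formula $D_q a^{ij}_\ve[\eta]\,\nabla_{ij}v = 2(p-2)\langle \eta, B_\ve\rangle$ merely packaging the paper's two separate computations of $\nabla_k(a^{ij}_\ve)$ and $(a^{ij}_\ve)_t$. One small point about the curvature step, which you inherit from the paper's wording, is that the $(p-2)$-part of $a^{ij}_\ve$ gives no curvature term because its contribution is $\sigma^{-1}\operatorname{Rm}(\nabla v,\nabla v,\nabla v,\nabla v)=0$, and not because of a skew-symmetry in the two indices contracted against $a^{ij}_\ve$ (that argument would equally annihilate the $g^{ij}$-part, i.e.\ the Ricci term).
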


\begin{proof}
Fix a point $(x_0,t_0)$ and compute in a local orthonormal frame geodesic at
$x_0$, writing $v_i, v_{ij}, v_{ijk}$ for the components of the covariant
derivatives of $v$, and $S := \nabla^2 v$. All formulas below are evaluated at
$(x_0,t_0)$.

\medskip
\noindent \textbf{Step 1: the Bochner term.} From $s_i = 2 v_k v_{ki}$ we
obtain $s_{ij} = 2 v_{ki}v_{kj} + 2 v_k v_{kij}$, whence
\[
L_\ve s = 2\,\mathcal Q_\ve + 2 v_k\, a^{ij}_\ve v_{kij}.
\]
As in the proof of Lemma \ref{L:crucial}, commuting covariant derivatives, gives $v_{kij} = v_{ijk} + R_{k\ell ij} v_\ell$.
When contracted with $v_k a^{ij}_\ve$, the $g^{ij}$-part of $a^{ij}_\ve$
produces $\Ric(\nabla v,\nabla v)$, while the $(p-2)v_iv_j/\sigma$-part gives zero by
the skew-symmetry of the curvature tensor in $i,j$ (exactly as in Lemma
\ref{L:crucial}). Moreover,
\[
v_k\, a^{ij}_\ve v_{ijk} = \big\langle \nabla(L_\ve v), \nabla v \big\rangle
- v_k\, \nabla_k(a^{ij}_\ve)\, v_{ij},
\]
and from
$\nabla_k (a^{ij}_\ve) = (p-2)\big[\tfrac{v_{ik}v_j + v_i v_{jk}}{\sigma} - \tfrac{v_i v_j s_k}{\sigma^2}\big]$,
using $\langle \nabla s, \nabla v\rangle = 2\, S(\nabla v,\nabla v)$ and $v_{ik}v_k = (S\,\nabla v)_i$,
\[
v_k\, \nabla_k(a^{ij}_\ve)\, v_{ij}
= (p-2)\left[\frac{2|S\,\nabla v|^2}{\sigma} - \frac{2\,S(\nabla v,\nabla v)^2}{\sigma^2}\right]
= 2(p-2)\,\mathcal P_\ve,
\]
where we have set
\[
\mathcal P_\ve := \frac{|S\,\nabla v|^2}{\sigma} - \frac{S(\nabla v,\nabla v)^2}{\sigma^2}.
\]
Since $L_\ve v = -w$, we conclude
\begin{equation}\label{Ls}
L_\ve s = 2\,\mathcal Q_\ve - 2\langle \nabla w, \nabla v\rangle - 4(p-2)\,\mathcal P_\ve + 2\,\Ric(\nabla v,\nabla v).
\end{equation}

\medskip
\noindent \textbf{Step 2: the time commutator.} Differentiating the equation
$v_t = L_\ve v + h(s)$ with respect to $t$ (the metric is time independent,
so $\partial_t$ commutes with covariant differentiation) gives
\[
v_{tt} = (a^{ij}_\ve)_t v_{ij} + L_\ve (v_t) + h'(s) s_t,
\]
i.e.,
\begin{equation}\label{Lvt}
L_\ve (v_t) - v_{tt} = - (a^{ij}_\ve)_t v_{ij} - h'(s) s_t .
\end{equation}
From
\[
(a^{ij}_\ve)_t = (p-2)\big[\tfrac{(v_i)_t v_j + v_i (v_j)_t}{\sigma} - \tfrac{v_iv_j s_t}{\sigma^2}\big],\ \ \ \text{and}\ \ \ 
s_t = 2\langle \nabla v, \nabla v_t\rangle\]
we find
\begin{equation}\label{at}
(a^{ij}_\ve)_t v_{ij}
= 2(p-2)\left[\frac{\langle S\nabla v, \nabla v_t\rangle}{\sigma} - \frac{S(\nabla v,\nabla v)\,\langle \nabla v, \nabla v_t\rangle}{\sigma^2}\right]
= 2(p-2) \big\langle B_\ve, \nabla v_t \big\rangle.
\end{equation}

\medskip
\noindent \textbf{Step 3: assembling.} Since $w = h(s) - v_t$,
\[
L_\ve w - w_t = h'(s)\big(L_\ve s - s_t\big) + h''(s)\, a^{ij}_\ve s_i s_j - \big(L_\ve(v_t) - v_{tt}\big).
\]
Substituting \eqref{Lvt} and \eqref{at}, the two occurrences of $h'(s)s_t$
cancel and we obtain
\begin{equation}\label{assemble}
L_\ve w - w_t = h'(s)\, L_\ve s + h''(s)\, a^{ij}_\ve s_i s_j
+ 2(p-2)\big\langle B_\ve, \nabla v_t\big\rangle .
\end{equation}
Next, differentiating $v_t = h(s) - w$ in space gives
$\nabla v_t = h'(s)\,\nabla s - \nabla w$, and since $\langle B_\ve, \nabla s\rangle = 2\,\mathcal P_\ve$,
\[
2(p-2)\big\langle B_\ve, \nabla v_t\big\rangle
= 4(p-2)\, h'(s)\,\mathcal P_\ve - 2(p-2)\big\langle B_\ve, \nabla w\big\rangle .
\]
Inserting this and \eqref{Ls} into \eqref{assemble}, the terms
$\pm\, 4(p-2)h'(s)\mathcal P_\ve$ cancel, and we finally arrive at
\[
L_\ve w - w_t = 2h'(s)\big[\mathcal Q_\ve + \Ric(\nabla v,\nabla v)\big] + h''(s)  a^{ij}_\ve s_is_j
- 2\big\langle \nabla w,\ h'(s) \nabla v + (p-2) B_\ve \big\rangle,
\]
which is \eqref{bochner-eps}.

\end{proof}

\begin{rmrk}\label{R:specialize}
When $\ve \to 0$ and $\nabla v \not= 0$, one has
$B_\ve \to \frac{\nabla^2v(\xi) - \nabla^2v(\xi,\xi)\xi}{|\nabla v|}$ with
$\xi = \nabla v/|\nabla v|$, and, if in addition $h(s) = (p-1)s$, the identity
\eqref{bochner-eps} reduces exactly to Lemma \ref{L:crucial}. Thus
Lemma \ref{L:bochner-eps} is a strict extension of Lemma \ref{L:crucial}:
the cancellation of the quadratic terms
$|\nabla^2v(\xi)|^2 - |\nabla^2v(\xi,\xi)|^2$ survives the regularization
\emph{exactly}, for every $\ve>0$ and every choice of the first-order term
$h$. The entire effect of the regularization is the single term
$h''(s)\, a^{ij}_\ve s_i s_j$, which vanishes identically when $h$ is
linear, i.e., when $p\ge 2$ by our choice \eqref{heps}.
\end{rmrk}

\subsection{The extended Newton inequality and coercivity}

We next extend Lemma \ref{L:bochner} to vectors of length at most one. This
is what replaces Corollary \ref{C:boc}: at (or near) critical points the
regularized direction field $\eta := \nabla v/\sqrt{\sigma}$ has $|\eta| < 1$, and
the effective exponent seen by the Newton inequality is not $p$ but
\begin{equation}\label{pbeta}
p_\beta := 2 + (p-2)\beta \in [\min\{2,p\}, \max\{2,p\}], \qquad \beta = |\eta|^2 .
\end{equation}

\begin{lemma}\label{L:newton-ext}
Let $p>1$, $S \in \operatorname{Sym}(n;\R)$, and $\eta \in \Rn$ with
$|\eta| \le 1$. Then, with $\beta = |\eta|^2$ and $p_\beta$ as in
\eqref{pbeta},
\begin{equation}\label{newton-ext}
|S|^2 + (p-2)\,|S\eta|^2 \;\ge\; \frac{1}{n + (p-2)\beta}\,
\Big[\operatorname{tr} S + (p-2)\, S(\eta,\eta)\Big]^2 .
\end{equation}
In particular, when $p \ge 2$, since $n+(p-2)\beta \le n+p-2$,
\begin{equation}\label{newton-ext-p2}
|S|^2 + (p-2)\,|S\eta|^2 \;\ge\; \frac{1}{n+p-2}\,
\Big[\operatorname{tr} S + (p-2)\, S(\eta,\eta)\Big]^2 .
\end{equation}
\end{lemma}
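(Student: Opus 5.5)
The plan is to follow the proof of Lemma~\ref{L:bochner} almost verbatim, but now keep track of the length of $\eta$ and close the argument with a weighted Cauchy--Schwarz inequality instead of the explicit quadratic computation. If $\eta=0$, then $\beta=0$ and \eqref{newton-ext} is exactly the classical Newton inequality $|S|^2\ge \frac1n(\operatorname{tr}S)^2$, so we may assume $\eta\neq 0$. Both sides of \eqref{newton-ext} are invariant under $S\mapsto T^\star S T$, $\eta\mapsto T^\star\eta$ with $T\in\mathbb O(n)$. Hence we may take $\eta = r e_1$ with $r=|\eta|\in(0,1]$ and $\beta=r^2$.

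As in Lemma~\ref{L:bochner} we write
\[
S=\begin{pmatrix} q & b\\ b^\star & M\end{pmatrix},\qquad \tau=\operatorname{tr}M .
\]
Then
\[
|S\eta|^2=\beta(q^2+|b|^2),\qquad S(\eta,\eta)=\beta q,
\]
and therefore
\[
|S|^2+(p-2)|S\eta|^2=\bigl(1+(p-2)\beta\bigr)q^2+\bigl(2+(p-2)\beta\bigr)|b|^2+|M|^2 .
\]
The key observation is that, because $\beta\in[0,1]$, both coefficients are positive for every $p>1$:
\[
c:=1+(p-2)\beta\ge\min\{1,p-1\}>0,\qquad 2+(p-2)\beta\ge\min\{2,p\}>0 .
\]
This is the only place where the hypothesis $|\eta|\le 1$ enters, and it is the step I would double check, since for $1<p<2$ and $|\eta|>1$ the coefficients could become negative. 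We may then discard the $|b|^2$ term and use Newton's inequality $|M|^2\ge \tau^2/(n-1)$. This gives
\[
|S|^2+(p-2)|S\eta|^2\ge c\,q^2+\frac{\tau^2}{n-1}.
\]
(When $n=1$, $M$ is absent, $\tau=0$, and the $\tau$ terms simply drop out.)

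On the right-hand side of \eqref{newton-ext} the bracket equals $q+\tau+(p-2)\beta q=\tau+c\,q$, and the denominator is $n+(p-2)\beta=(n-1)+c$. So it suffices to show
\[
(\tau+cq)^2\le\bigl((n-1)+c\bigr)\Bigl(\frac{\tau^2}{n-1}+c\,q^2\Bigr).
\]
This is Cauchy--Schwarz applied to the vectors $(\sqrt{n-1},\sqrt c)$ and $(\tau/\sqrt{n-1},\sqrt c\, q)$; it is legitimate precisely because $c>0$. This proves \eqref{newton-ext}.

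For \eqref{newton-ext-p2}, note that when $p\ge 2$ the denominator satisfies $0<n+(p-2)\beta\le n+p-2$. Since the squared bracket is nonnegative, replacing the denominator by the larger quantity $n+p-2$ can only decrease the right-hand side, which gives the claim.
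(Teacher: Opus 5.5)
Your proof is correct and is essentially the paper's argument. The paper writes $\eta=\sqrt\beta\,\xi$ with $|\xi|=1$, checks that $p_\beta=2+(p-2)\beta>1$, and invokes Lemma~\ref{L:bochner} with $p$ replaced by $p_\beta$; you instead run that lemma's proof directly at the exponent $p_\beta$. Your $c$ is exactly $p_\beta-1$, your coefficient of $|b|^2$ is $p_\beta$, and your closing Cauchy--Schwarz step is equivalent to the inequality $(n-1)q^2+\tau^2/(n-1)\ge 2q\tau$ used in Lemma~\ref{L:bochner}.
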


\begin{proof}
If $\eta = 0$ this is Newton's inequality. If $\eta \not=0$, write
$\eta = \sqrt{\beta}\,\xi$ with $|\xi| = 1$. Then
$|S\eta|^2 = \beta |S\xi|^2$ and $S(\eta,\eta) = \beta\, S(\xi,\xi)$, so
that \eqref{newton-ext} is precisely \eqref{boc} in Lemma \ref{L:bochner}
with $p$ replaced by $p_\beta$. Moreover $p_\beta > 1$: indeed
$p_\beta - 1 = 1 + (p-2)\beta$, and since $p>1$ we have
$(p-2)\beta > -\beta$, whence $p_\beta - 1 > 1-\beta \ge 0$. Lemma
\ref{L:bochner} therefore applies and gives \eqref{newton-ext}.
\end{proof}

Notice that in our application $\eta = \nabla v/\sqrt{\sigma}$, so that
\[
\mathcal Q_\ve = |S|^2 + (p-2)|S\eta|^2,
\qquad
\operatorname{tr} S + (p-2) S(\eta,\eta) = a^{ij}_\ve v_{ij} = L_\ve v = -w_\ve .
\]

For $1<p<2$ the denominator in \eqref{newton-ext} \emph{exceeds} $n+p-2$,
by the amount $(2-p)(1-\beta) = (2-p)\ve^2/\sigma$. The role of the
correction term in \eqref{heps} is exactly to compensate for this loss, at
the price of the (negative) remainder $h_\ve''\, a^{ij}_\ve s_is_j$, which
is in turn absorbed by the excess $h_\ve' - (p-1)$. These estimates are carried out
in the next proposition, which is the substitute for Corollary
\ref{C:coercive} and holds with the \emph{sharp constant, uniformly in}
$\ve$. We emphasize that no nondegeneracy is assumed in the next proposition.

\begin{prop}\label{P:coercive-eps}
Let $\ve>0$, $1<p<\infty$, and let $v$ be a smooth solution of \eqref{Eeps}
on $\bM\times(0,T]$, with $\Ric \ge 0$. Then, at every point of
$\bM\times(0,T]$,
\begin{equation}\label{coercive-eps}
L_\ve F_\ve - (F_\ve)_t + 2\big\langle \nabla F_\ve, A_\ve \big\rangle
\;\ge\; \frac{2(p-1)}{n+p-2}\, \frac{F_\ve\,(F_\ve+a)}{t},
\end{equation}
where $F_\ve$ is as in \eqref{Feps-def} and $A_\ve$ as in
\eqref{Aeps-drift} with $h = h_\ve$.
\end{prop}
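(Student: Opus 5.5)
The plan is to specialize the exact identity of Lemma \ref{L:bochner-eps} to $h = h_\ve$ and then reproduce the argument of Corollary \ref{C:coercive}. Lemma \ref{L:newton-ext} will replace Corollary \ref{C:boc}. For $1<p<2$, the concavity remainder $h_\ve''\,a^{ij}_\ve s_is_j$ must be absorbed by the excess $h_\ve'-(p-1)$.

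First, since $F_\ve = t w_\ve - a$, exactly as in the proof of Lemma \ref{L:crucial} we have
\[
L_\ve F_\ve - (F_\ve)_t + 2\langle\nabla F_\ve, A_\ve\rangle = t\big(L_\ve w_\ve - (w_\ve)_t + 2\langle\nabla w_\ve,A_\ve\rangle\big) - w_\ve .
\]
By \eqref{bochner-eps} this equals $2t h_\ve'(s)[\mathcal Q_\ve + \Ric(\nabla v,\nabla v)] + t h_\ve''(s) a^{ij}_\ve s_is_j - w_\ve$. Since $h_\ve'\ge p-1>0$ and $\Ric\ge 0$, the Ricci term can be dropped. It therefore suffices to prove the pointwise algebraic inequality
\[
2h_\ve'(s)\,\mathcal Q_\ve + h_\ve''(s)\,a^{ij}_\ve s_is_j \;\ge\; \frac{2(p-1)}{n+p-2}\, w_\ve^2 .
\]
Granting it, the rest is the computation of Corollary \ref{C:coercive}. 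Substituting $w_\ve = (F_\ve+a)/t$ and using $\frac{2(p-1)}{n+p-2}a = 1$, we get $\frac{2(p-1)}{n+p-2}\frac{(F_\ve+a)^2}{t} - \frac{F_\ve+a}{t} = \frac{2(p-1)}{n+p-2}\frac{F_\ve(F_\ve+a)}{t}$.

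\textbf{Case $p\ge 2$.} Here $h_\ve' = p-1$ and $h_\ve''=0$. Recall from the discussion after Lemma \ref{L:newton-ext} that $\mathcal Q_\ve = |S|^2+(p-2)|S\eta|^2$ with $\eta = \nabla v/\sqrt\sigma$, and that $\operatorname{tr}S+(p-2)S(\eta,\eta) = -w_\ve$. Hence \eqref{newton-ext-p2} gives $\mathcal Q_\ve \ge w_\ve^2/(n+p-2)$, which is the required inequality directly.

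\textbf{Case $1<p<2$ (the main point).} Set $r := (\ve^2/\sigma)^\delta\in(0,1]$, so that $h_\ve' = p-1+Kr$ and $h_\ve'' = -\delta K r/\sigma$.

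I would first bound the remainder by $\mathcal Q_\ve$. Since $p<2$, the largest eigenvalue of $a^{ij}_\ve$ is $1$, so $a^{ij}_\ve s_is_j \le |\nabla s|^2 = 4|S\nabla v|^2 = 4\sigma|S\eta|^2$. Since $|\eta|\le 1$, we also have $\mathcal Q_\ve \ge (p-1)|S|^2 \ge (p-1)|S\eta|^2$. Therefore
\[
h_\ve''\,a^{ij}_\ve s_is_j \ge -4\delta K r\,|S\eta|^2 \ge -\frac{4\delta}{p-1}Kr\,\mathcal Q_\ve = -Kr\,\mathcal Q_\ve ,
\]
which is precisely where the choice $\delta = (p-1)/4$ enters. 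The left side of the target inequality is then at least $(2(p-1)+Kr)\mathcal Q_\ve$.

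Next, \eqref{newton-ext} gives $\mathcal Q_\ve \ge w_\ve^2/\big(n+p-2+(2-p)(1-\beta)\big)$ with $1-\beta = \ve^2/\sigma$. It remains to check that
\[
\frac{2(p-1)+Kr}{n+p-2+(2-p)\ve^2/\sigma}\ge \frac{2(p-1)}{n+p-2}.
\]
Cross-multiplying, this reduces to $Kr(n+p-2)\ge 2(p-1)(2-p)\,\ve^2/\sigma$. By the definition of $K$ in \eqref{deltaK}, this is exactly $r\ge \ve^2/\sigma$, i.e.\ $(\ve^2/\sigma)^\delta \ge \ve^2/\sigma$. That holds because $0<\ve^2/\sigma\le 1$ and $\delta<1$.

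The only delicate step is this bookkeeping in the case $1<p<2$: making the eigenvalue bound on $a^{ij}_\ve$ and the comparison $|S\eta|^2\le \mathcal Q_\ve/(p-1)$ fit with the constants $\delta$ and $K$. Everything else is inherited verbatim from Corollary \ref{C:coercive}.
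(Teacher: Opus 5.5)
Your proposal is correct and follows the paper's proof essentially step for step. You establish the same pointwise inequality via Lemma \ref{L:bochner-eps} and the extended Newton inequality, absorb the concavity remainder using $\delta=(p-1)/4$, and check the same cross-multiplied condition $(\ve^2/\sigma)^\delta\ge\ve^2/\sigma$. The only cosmetic difference is that you bound $a^{ij}_\ve s_is_j\le 4\sigma|S\eta|^2\le \frac{4\sigma}{p-1}\mathcal Q_\ve$, which builds in the factor $s/\sigma\le 1$; the paper instead uses $4s|S|^2$ together with $\mathcal Q_\ve\ge(p-1)|S|^2$ and then discards $s/\sigma$.
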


\begin{proof}
We first claim the pointwise inequality
\begin{equation}\label{claim-pointwise}
2h_\ve'(s)\, \mathcal Q_\ve + h_\ve''(s)\, a^{ij}_\ve s_i s_j
\;\ge\; \frac{2(p-1)}{n+p-2}\; w_\ve^2 .
\end{equation}

\medskip

\noindent \underline{Case $p \ge 2$.} Here $h_\ve(s) = (p-1)s$, so
$h_\ve' = p-1$, $h_\ve'' = 0$, and \eqref{claim-pointwise} follows at once
from \eqref{newton-ext-p2}.

\medskip

\noindent \underline{Case $1<p<2$.} First, since the eigenvalues of
$a^{ij}_\ve$ do not exceed $1$ and $|\nabla s|^2 = 4|S\,\nabla v|^2 \le 4 s\,|S|^2$,
while
$\mathcal Q_\ve \ge \big(1-(2-p)\beta\big)|S|^2 \ge (p-1)|S|^2$, we have
\[
a^{ij}_\ve s_i s_j \le 4 s\, |S|^2 \le \frac{4s}{p-1}\, \mathcal Q_\ve .
\]
Hence, using \eqref{heps-props} and $\delta = \frac{p-1}{4}$, we obtain
\[
2h_\ve'  \mathcal Q_\ve + h_\ve'' a^{ij}_\ve s_is_j
\;\ge\; \left[2h_\ve'(s) - \frac{4s |h_\ve''(s)|}{p-1}\right] \mathcal Q_\ve,
\]
and
\[
\frac{4s |h_\ve''|}{p-1} = \frac{4\delta K}{p-1}\left(\frac{\ve^2}{\sigma}\right)^{\!\delta} \frac{s}{\sigma}
\le K\left(\frac{\ve^2}{\sigma}\right)^{\!\delta},
\]
so that
\begin{equation}\label{bracket}
2h_\ve' \mathcal Q_\ve + h_\ve'' a^{ij}_\ve s_is_j
\;\ge\; \left[2(p-1) + K\left(\frac{\ve^2}{\sigma}\right)^{\!\delta}\right] \mathcal Q_\ve .
\end{equation}
On the other hand, by Lemma \ref{L:newton-ext},
$\mathcal Q_\ve \ge \frac{w_\ve^2}{n+(p-2)\beta}$, and by the definition
\eqref{deltaK} of $K$, we have
\begin{align*}
\frac{2(p-1)}{n+p-2}\,\big[n+(p-2)\beta\big]
& = 2(p-1)\left[1 + \frac{(2-p)(1-\beta)}{n+p-2}\right]
\\
& = 2(p-1) + K\,\frac{\ve^2}{\sigma}
\;\le\; 2(p-1) + K \left(\frac{\ve^2}{\sigma}\right)^{\!\delta},
\end{align*}
where in the last step we have used $\ve^2/\sigma \le 1$ and $\delta < 1$.
Combining this with \eqref{bracket} proves \eqref{claim-pointwise} also in
this case.

\smallskip
Now, by Lemma \ref{L:bochner-eps} (with $h = h_\ve$), $\Ric \ge 0$,
$h_\ve' > 0$, and \eqref{claim-pointwise},
\[
L_\ve w_\ve - (w_\ve)_t + 2\langle \nabla w_\ve, A_\ve\rangle
\;\ge\; \frac{2(p-1)}{n+p-2}\, w_\ve^2 .
\]
Since $F_\ve = t w_\ve - a$, we have 
\[
\nabla F_\ve = t\,\nabla w_\ve,\ \ \ 
L_\ve F_\ve = t\, L_\ve w_\ve,\ \ \ (F_\ve)_t = w_\ve + t (w_\ve)_t,
\]
and
$w_\ve = (F_\ve + a)/t$. Therefore, using
$\frac{2(p-1)}{n+p-2}\, a = 1$,
\[
L_\ve F_\ve - (F_\ve)_t + 2\langle \nabla F_\ve, A_\ve\rangle
\;\ge\; \frac{2(p-1)}{n+p-2}\, \frac{(F_\ve+a)^2}{t} - \frac{F_\ve+a}{t}
= \frac{2(p-1)}{n+p-2}\, \frac{F_\ve (F_\ve+a)}{t},
\]
which is \eqref{coercive-eps}.
\end{proof}

\subsection{The sharp Li-Yau inequality for the flow \eqref{Eeps}}
The aim of this section is to prove the following critical result for solutions of the flow \eqref{Eeps}. 

\begin{thrm}\label{T:LY-eps}
Let $\bM$ be a closed manifold with $\Ric \ge 0$, let $\ve>0$, and let $v$
be a smooth solution of \eqref{Eeps} on $\bM\times[0,T]$. Then
\begin{equation}\label{LY-eps}
F_\ve \;\le\; 0 \qquad \text{on } \bM\times[0,T].
\end{equation}
In particular, by \eqref{Feps-dominates}, with $u = e^v$,
\begin{equation}\label{LY-eps-u}
(p-1)\frac{|\nabla u|^2}{u^2} - \frac{u_t}{u} \;\le\; \frac{n+p-2}{2(p-1)\,t}
\qquad \text{on } \bM\times(0,T].
\end{equation}
\end{thrm}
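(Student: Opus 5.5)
The plan is a maximum-principle argument on the compact set $\bM\times[0,T]$ applied to $F_\ve$, using Proposition \ref{P:coercive-eps}. No nondegeneracy is needed: $L_\ve$, $A_\ve$ and $F_\ve$ are smooth everywhere, because $\sigma\ge\ve^2>0$. I will follow the scheme at the end of Section \ref{S:proof} almost verbatim, with $L$, $A$, $F$ replaced by $L_\ve$, $A_\ve$, $F_\ve$.

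Fix $\nu>0$ and consider $Q_\nu := F_\ve - \nu t$ on $\bM\times[0,T]$. At $t=0$ we have $F_\ve = -a<0$, since $t w_\ve$ vanishes there. This uses that $v$ is smooth up to $t=0$, so $w_\ve = h_\ve(|\nabla v|^2) - v_t$ is bounded near $t=0$.

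Suppose, for contradiction, that $\max Q_\nu>0$. By compactness the maximum is attained at some $(x_0,t_0)$, and necessarily $t_0\in(0,T]$. At this point the first- and second-order conditions give
\[
\nabla F_\ve = 0, \qquad \nabla^2 F_\ve \le 0, \qquad (F_\ve)_t \ge \nu .
\]
The time derivative is one-sided if $t_0=T$. By the uniform ellipticity \eqref{unif-ell}, $L_\ve F_\ve(x_0,t_0) \le 0$. Hence the left side of \eqref{coercive-eps} is at most $-\nu$.

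On the other hand, $F_\ve(x_0,t_0) > \nu t_0 > 0$, so $F_\ve(F_\ve+a)>0$ and the right side of \eqref{coercive-eps} is strictly positive. This is a contradiction. Therefore $F_\ve \le \nu t \le \nu T$ on $\bM\times[0,T]$, and letting $\nu\to 0$ gives \eqref{LY-eps}.

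The second statement \eqref{LY-eps-u} then follows from \eqref{Feps-dominates} together with $v=\log u$. Indeed, $\nabla v = \nabla u/u$ and $v_t = u_t/u$, so $F_\ve\le 0$ gives $t\big((p-1)|\nabla v|^2 - v_t\big)\le a$, and dividing by $t>0$ yields \eqref{LY-eps-u}.

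The only delicate point I anticipate is the boundary/initial behaviour. One must make sure that $F_\ve$ is continuous on $\bM\times[0,T]$ with $F_\ve(\cdot,0)=-a$. This is where "smooth solution on $\bM\times[0,T]$" is used, since then $v_t$ and $\nabla v$ are bounded near $t=0$.

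Everything else is already contained in Proposition \ref{P:coercive-eps}. That proposition holds at every point, including where $\nabla v=0$, and this is exactly what removes the obstruction noted at the end of Section \ref{S:proof}.
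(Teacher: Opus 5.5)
Your proposal is correct and follows the paper's proof essentially verbatim. Both use the same perturbation $Q_\nu = F_\ve - \nu t$, the same first- and second-order conditions at a positive maximum with $t_0\in(0,T]$, and the same contradiction with Proposition \ref{P:coercive-eps}, which holds at every point with no assumption on the critical set.
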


\begin{proof}
Fix $\nu>0$ and set $Q_\nu := F_\ve - \nu t$. Since
$F_\ve(\cdot,0) = -a < 0$, if $Q_\nu$ were positive somewhere then, $\bM$
being closed, $Q_\nu$ would attain a positive maximum at some point
$(x_0,t_0) \in \bM\times(0,T]$. At $(x_0,t_0)$ we have
\[
\nabla Q_\nu = 0, \qquad (Q_\nu)_t \ge 0, \qquad \nabla^2 Q_\nu \le 0
\]
(the inequality $(Q_\nu)_t \ge 0$ holding also when $t_0 = T$, as a
one-sided derivative). Hence $\nabla F_\ve(x_0,t_0) = 0$ and
$(F_\ve)_t(x_0,t_0) \ge \nu$; moreover, since $F_\ve$ and $Q_\nu$ differ
by the spatially constant function $\nu t$, we have $L_\ve F_\ve = L_\ve
Q_\nu = \operatorname{tr}\big(A_\ve\, \nabla^2Q_\nu\big) \le 0$ at
$(x_0,t_0)$, the trace of the product of the positive semidefinite matrix
$A_\ve = [a^{ij}_\ve]$ (by \eqref{unif-ell}) with the negative
semidefinite matrix $\nabla^2Q_\nu(x_0,t_0)$ being nonpositive. \emph{We emphasize that no
assumption on $\nabla v(x_0,t_0)$ is needed here}: all the quantities in
Proposition \ref{P:coercive-eps} are smooth, and \eqref{coercive-eps} holds
at every point. Thus, at $(x_0,t_0)$, since $\nabla F_\ve = 0$,
\[
0 \;\ge\; L_\ve F_\ve - (F_\ve)_t + \nu
\;\ge\; \frac{2(p-1)}{n+p-2}\, \frac{F_\ve (F_\ve+a)}{t_0} + \nu .
\]
Since 
\[
F_\ve(x_0,t_0) = Q_\nu(x_0,t_0) + \nu t_0 > 0,
\]
the right-hand side
is strictly positive, a contradiction. Therefore $Q_\nu \le 0$ on
$\bM\times[0,T]$ for every $\nu>0$, and letting $\nu \to 0$ gives
\eqref{LY-eps}.

\end{proof}

\begin{rmrk}\label{R:naive}
It is worth explaining why the standard regularization
\[
\big(|\nabla u|^2+\ve^2\big)^{\frac{p-2}{2}} u_t
= \operatorname{div}\Big(\big(|\nabla u|^2+\ve^2\big)^{\frac{p-2}{2}} \nabla u\Big)
\]
does \emph{not} yield the sharp constant. Writing the corresponding
equation for $v = \log u$ in the form
$v_t = L_\ve v + h(|\nabla v|^2)$, one finds that the first-order term is forced
to be $h(s) = s + (p-2)\frac{s^2}{s+\ve^2}$, for which
\[
h'(s) = (p-1) - (p-2)\,\frac{\ve^4}{\sigma^2},
\qquad
h''(s) = 2(p-2)\,\frac{\ve^4}{\sigma^3}.
\]
When $p>2$, at points where $s \lesssim \ve^2$ one has
$h'(s) \approx 1 < p-1$: near critical points the regularized flow behaves
like the heat equation, whose Li-Yau constant $\frac n2$ is
\emph{worse} than $a = \frac{n+p-2}{2(p-1)}$ for $p > 2$. Running the
maximum principle with this $h$ only yields
$F \le \frac{(p-2)(n+p-2)}{2(p-1)}$, uniformly in $\ve$ but not
$F \le 0$. The point of the choice \eqref{Aeps}--\eqref{heps} is that the
second-order regularization and the first-order term can be
\emph{decoupled}: Lemma \ref{L:bochner-eps} holds for every $h$, and only
the combination $2h'\mathcal Q_\ve + h'' a^{ij}_\ve s_is_j$ needs to
dominate $\frac{2(p-1)}{n+p-2} w^2$. For $p\ge2$ the optimal choice
$h(s) = (p-1)s$ makes the remainder vanish identically; for $1<p<2$ the
correction in \eqref{heps}, of size $O(\ve^{(p-1)/2})$, restores the sharp
constant.
\end{rmrk}

\subsection{Passage to the limit and proof of Theorem \ref{T:LY}}\label{SS:limit}

Theorem \ref{T:LY-eps} provides the Li-Yau bound for the approximating
flows with a constant independent of $\ve$. What remains is a purely
compactness argument, for which we use two standard ingredients:

\begin{hyp}[solvability of \eqref{Eeps}]\label{H:solv}
For every $\ve>0$ and every
$v_0 \in C^\infty(\bM)$ there exists a unique smooth solution
$v^\ve$ of \eqref{Eeps} on $\bM\times[0,T]$ with $v^\ve(\cdot,0) = v_0$.
\end{hyp}

\noindent For fixed $\ve$ the equation \eqref{Eeps} is uniformly parabolic with
coefficients depending smoothly on $\nabla v$, and its zeroth-order term
$h_\ve(|\nabla v|^2)$ has natural (quadratic) growth in the gradient;
short-time solvability, together with the a priori estimates (maximum
principle for $v$, Bernstein-type gradient bound, and then interior
Schauder theory) needed to continue the solution up to time $T$, is
provided by the classical quasilinear theory, see \cite[Chapter V]{LSU}
and \cite[Chapters XI-XII]{Lieb}. The functions
$\min v_0$ and $\max v_0$ are solutions, so
$\min v_0 \le v^\ve \le \max v_0$ by comparison.

\begin{hyp}[comparison for the limit equation]\label{H:comp}
If $\overline v$ is a
bounded upper semicontinuous viscosity subsolution, and $\underline v$ a
bounded lower semicontinuous viscosity supersolution, of the normalized
limit equation on $\bM\times(0,T)$ in the sense of Remark
\ref{R:formulation}, with
$\overline v(\cdot,0) \le \underline v(\cdot,0)$, then
$\overline v \le \underline v$; in particular there is a unique viscosity
solution with given continuous initial datum.
\end{hyp}

\noindent In $\Rn$ this is due to
\cite{Do, BG} -- see also \cite{JS}. On a
closed manifold the same proof runs in local coordinates once the
Euclidean parabolic maximum principle for semijets is replaced by its
Riemannian counterpart, for which see \cite{AFS}, \cite[Section 3]{AJM}
and \cite[Section 2]{LTW}. A complete Riemannian theory of
Ishii-Souganidis type -- comparison, Perron existence and uniqueness for
$u_t + F(\nabla u,\nabla^2 u) = 0$ with $F$ continuous off
$\{\nabla u = 0\}$, degenerate elliptic and invariant under parallel
translation -- was developed in \cite{AJM}; the comparison theorem there
additionally requires $F$ to be \emph{geometric}, i.e.
$F(\lambda q, \lambda X + \mu\, q\otimes q) = \lambda F(q,X)$ for
$\lambda > 0$, $\mu \in \mathbb R$, a condition which \eqref{0} satisfies
only in the level-set limit $p \to 1$. Its Riemannian ingredients -- the
maximum principle for semicontinuous functions on manifolds and the bounds
on the Hessian of $d(x,y)^2$ -- are nonetheless exactly those on which such
a proof must rest, and the appendix of \cite{AJM} covers, on a general
manifold, the non-singular equations to which the approximating flows
\eqref{Eeps} belong.

\begin{thrm}\label{T:LY-complete}
Let $\bM$ be a closed manifold with $\Ric \ge 0$, let $u_0 \in
C^\infty(\bM)$, $u_0 > 0$, and let $u$ be the unique positive viscosity
solution of the normalized equation \eqref{0} on $\bM\times(0,T]$ with
$u(\cdot,0) = u_0$. Then we have for $v = \log u$
\begin{equation}\label{LY-final}
v_t  \ge (p-1)|\nabla v|^2 - \frac{n+p-2}{2(p-1) t}, 
\end{equation}
in the viscosity sense on $\bM\times(0,T)$. In particular \eqref{LY-final}
holds in the pointwise sense at every point at which $v$ is differentiable
in $(x,t)$; if $u$ is smooth, \eqref{LY} holds everywhere on
$\bM\times(0,T]$, with no restriction whatsoever on the critical set of
$u$.
\end{thrm}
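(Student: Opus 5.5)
The proof is a compactness argument built on Theorem \ref{T:LY-eps}, followed by a stability property of viscosity supersolutions of the Hamilton-Jacobi inequality. Set $v_0 = \log u_0 \in C^\infty(\bM)$. By \Hsolv, for each $\ve>0$ there is a smooth solution $v^\ve$ of \eqref{Eeps} with $v^\ve(\cdot,0)=v_0$, and $\min v_0\le v^\ve\le\max v_0$. Theorem \ref{T:LY-eps} gives, pointwise on $\bM\times(0,T]$,
\[
v^\ve_t \ge h_\ve(|\nabla v^\ve|^2) - \frac{a}{t} \ge (p-1)|\nabla v^\ve|^2 - \frac{a}{t},
\]
uniformly in $\ve$.

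\textbf{Step 1: convergence of $v^\ve$.} I will pass to the limit by the half-relaxed limits of Barles-Perthame,
\[
\overline v = \limsup{}^* v^\ve, \qquad \underline v = \liminf{}_* v^\ve .
\]
This avoids needing equicontinuity in $\ve$. One checks that $\overline v$ is a viscosity subsolution and $\underline v$ a supersolution of the limit equation for $v$, namely $v_t = F(\nabla v,\nabla^2 v) + (p-1)|\nabla v|^2$.
\begin{itemize}
\item At contact points with $\nabla\varphi\ne 0$, this uses the locally uniform convergence $a^{ij}_\ve(q)\to a_{ij}(\hat q)$ and \eqref{heps-conv}.
\item At points with $\nabla\varphi = 0$, I reduce by property (c) to test functions with vanishing gradient and Hessian. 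The perturbed contact points $(x_\ve,t_\ve)$ then have $\nabla\varphi,\nabla^2\varphi\to 0$, so $L_\ve\varphi\to 0$ by \eqref{unif-ell}, and $h_\ve(|\nabla\varphi|^2)\to 0$. This gives \eqref{visc-flat}.
\end{itemize}
For the initial data, the constant-in-$\ve$ barriers $v_0(x)\pm Ct$ are classical sub- and supersolutions of \eqref{Eeps}, for $C$ depending on $\|v_0\|_{C^2}$ and $\sup\varepsilon\le 1$. Comparison for the smooth flow then gives $\overline v(\cdot,0)=\underline v(\cdot,0)=v_0$. \Hcomp, transferred to $v=\log u$ (a monotone change of variables that preserves viscosity notions), yields $\overline v\le\underline v$. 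Hence $v^\ve\to v$ locally uniformly, with $v = \log u$ by uniqueness.

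\textbf{Step 2: stability of the Li-Yau inequality.} Each $v^\ve$ is a classical, hence viscosity, supersolution of
\[
\phi_t - (p-1)|\nabla\phi|^2 + \frac{a}{t} = 0 .
\]
This Hamiltonian is continuous and independent of $\ve$. So the standard stability of viscosity supersolutions under locally uniform convergence gives the claim: if $\varphi$ touches $v$ from below at $(x_0,t_0)$ strictly, then $\varphi$ touches $v^\ve$ from below at nearby $(x_\ve,t_\ve)\to(x_0,t_0)$. There $v^\ve-\varphi$ has a minimum, so $\varphi_t=v^\ve_t$ and $\nabla\varphi=\nabla v^\ve$, which gives $\varphi_t\ge(p-1)|\nabla\varphi|^2-a/t$. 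Letting $\ve\to0$ proves \eqref{LY-final}.

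\textbf{Pointwise consequences.} At points of differentiability of $v$ in $(x,t)$, I plan to use the fact that a first-order viscosity supersolution satisfies the inequality at any point where it is differentiable. For this I would build a $C^1$ test function touching from below, as in the standard construction in Crandall-Evans-Lions. If $u$ is smooth, every point is a differentiability point, so \eqref{LY} holds everywhere, including on $\{\nabla u=0\}$.

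\textbf{Main obstacle.} I expect the delicate point to be Step 1 at degenerate contact points, together with making sure that \Hcomp applies to the relaxed limits with the correct initial trace. I would handle the initial trace with the explicit barriers above. Once that is in place, Step 2 is routine, since the Hamilton-Jacobi operator is continuous.
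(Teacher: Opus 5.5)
Your proposal is correct and follows the paper's proof essentially step for step. The shared sequence is: the half-relaxed limits of the solutions $v^\ve$ given by \Hsolv; the $\ve$-uniform barriers $v_0\pm C_*t$ to identify the initial trace; the vanishing-Hessian reduction at degenerate contact points; \Hcomp to conclude $\overline v=\underline v=\log u$ with locally uniform convergence; and stability of supersolutions of the continuous Hamilton-Jacobi equation, applied to the bound $F_\ve\le 0$ from Theorem \ref{T:LY-eps}. Your explicit remark that \Hcomp is transferred to $v=\log u$ by a monotone change of variables is a small clarification that the paper leaves implicit.
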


\begin{proof}
Let $v^\ve$ be the solution of \eqref{Eeps} with $v^\ve(\cdot,0) = \psi :=
\log u_0$ given by \Hsolv. We pass to the limit by the method of
half-relaxed limits of Barles-Perthame (see \cite[Section 6]{CIL},
\cite{Barles}), which requires no equicontinuity of the family
$\{v^\ve\}$. In this respect, our passage to the limit follows the
strategy of Chen-Giga-Goto \cite{CGG}, who constructed generalized mean
curvature flows by comparison and stability alone, rather than that of
Evans-Spruck \cite{ES}, which rests on uniform regularity estimates for
the regularized flows. We remark that a direct compactness argument via the
Krylov-Safonov theory is not available at this stage: the zeroth-order
term $h_\ve(|\nabla v^\ve|^2)$ has quadratic growth in the gradient of the
unknown, and treating it as a bounded right-hand side would presuppose an
$\ve$-uniform gradient bound for the family \eqref{Eeps} -- an estimate of
independent interest which the present method does not need.

\smallskip
\noindent \emph{Uniform bounds and behavior at $t=0$.} Since constants
solve \eqref{Eeps}, comparison for the (uniformly parabolic, smooth)
equation \eqref{Eeps} gives $\min \psi \le v^\ve \le \max \psi$, uniformly
in $\ve$. Moreover, with
\[
C_* := \sup_{\ve\in(0,1]}\ \sup_{\bM}\ \big| L_\ve \psi + h_\ve(|\nabla \psi|^2)\big|
\;\le\; C(n,p,\psi) < \infty
\]
(finite by \eqref{unif-ell} and $h_\ve(s) \le (p-1)s + Ks$), the functions
$\psi \pm C_* t$ are, respectively, a supersolution and a subsolution of
\eqref{Eeps}, whence
\begin{equation}\label{barriers}
\psi(x) - C_*\, t \;\le\; v^\ve(x,t) \;\le\; \psi(x) + C_*\, t
\qquad \text{on } \bM\times[0,T], \ \text{uniformly in } \ve .
\end{equation}

\smallskip
\noindent \emph{Half-relaxed limits.} Define, for $(x,t) \in \bM\times[0,T)$,
\[
\overline v(x,t) := \limsup_{\ve\to 0,\, (y,s)\to(x,t)} v^\ve(y,s),
\qquad
\underline v(x,t) := \liminf_{\ve\to 0,\, (y,s)\to(x,t)} v^\ve(y,s),
\]
which are finite by the uniform bounds, upper (resp. lower)
semicontinuous, and satisfy $\underline v \le \overline v$ and, by
\eqref{barriers}, $\overline v(\cdot,0) = \underline v(\cdot,0) = \psi$.
We claim that $\overline v$ is a viscosity subsolution, and $\underline v$
a viscosity supersolution, of the normalized limit equation
\[
v_t = \Delta v + (p-2)|\nabla v|^{-2}\nabla^2v(\nabla v,\nabla v) + (p-1)|\nabla v|^2,
\]
in the sense of Remark \ref{R:formulation}. We give the argument for
$\overline v$. Let $\varphi$ touch $\overline v$ from above at
$z_0 = (x_0,t_0)$, $t_0>0$; by the standard perturbation argument
\cite[Lemma 6.1]{CIL} there exist $\ve_j \to 0$ and points $z_j \to z_0$
of local maximum of $v^{\ve_j} - \varphi$ with
$v^{\ve_j}(z_j) \to \overline v(z_0)$. Since $v^{\ve_j}$ is a classical
solution of \eqref{Eeps},
\begin{equation}\label{touch}
\varphi_t(z_j) \;\le\; a^{ij}_{\ve_j}(\nabla\varphi(z_j))\, \nabla_{ij}\varphi(z_j) + h_{\ve_j}\big(|\nabla\varphi(z_j)|^2\big).
\end{equation}
If $\nabla\varphi(z_0) \not= 0$, then $\nabla\varphi(z_j) \not= 0$ for
large $j$, and since 
\[
a^{ij}_\ve(q) \to g^{ij} + (p-2)q^iq^j/|q|^2\ \ \ \ \text{and}\ \ \ 
h_\ve(s) \to (p-1)s
\]
locally uniformly on $\{q \not= 0\}$ (by
\eqref{Aeps} and \eqref{heps-conv}), passing to the limit in \eqref{touch}
gives the subsolution condition at $z_0$. If $\nabla\varphi(z_0) = 0$,
by Remark \ref{R:formulation} we may assume $\nabla^2\varphi(z_0) = 0$,
and we must show $\varphi_t(z_0) \le 0$. By \eqref{unif-ell},
\[
\big|a^{ij}_{\ve_j}(\nabla\varphi(z_j))\, \nabla_{ij}\varphi(z_j)\big|
\le (n+|p-2|)\, \big|\nabla^2\varphi(z_j)\big| \longrightarrow 0,
\qquad
h_{\ve_j}\big(|\nabla\varphi(z_j)|^2\big) \longrightarrow 0,
\]
since $\nabla\varphi(z_j) \to 0$, $\nabla^2\varphi(z_j) \to 0$ and
$0 \le h_\ve(s) \le (p-1+K)s$. Thus \eqref{touch} yields
$\varphi_t(z_0) \le 0$, as required. Note that the degenerate contact
case uses no information on the direction $\nabla\varphi(z_j)/|\nabla\varphi(z_j)|$:
this is precisely the point of the vanishing-Hessian formulation of Remark
\ref{R:formulation}. The supersolution property of $\underline v$ is
proved in the same way.

\smallskip
\noindent \emph{Identification of the limit.} Note that
$\underline v \le \overline v$ holds by definition; it is the
\emph{reverse} inequality that must be proved, and it is here that
uniqueness for the limit equation substitutes for compactness of the
approximating family. Indeed, by \Hcomp, applied to the
subsolution $\overline v$ and the supersolution $\underline v$, which
share the initial datum $\psi$, we obtain $\overline v \le \underline v$;
hence $\overline v = \underline v =: v$ is continuous, is the unique
viscosity solution with datum $\psi$, i.e., $v = \log u$, and, by the
standard property of half-relaxed limits \cite[Remark 6.4]{CIL},
\[
v^\ve \longrightarrow v = \log u \qquad \text{locally uniformly on } \bM\times[0,T).
\]
By Theorem \ref{T:LY-eps}, for every $\ve>0$ we have $F_\ve \le 0$, i.e.,
using \eqref{Feps-dominates},
\[
(v^\ve)_t \;\ge\; (p-1)|\nabla v^\ve|^2 - \frac at
\qquad \text{pointwise on } \bM\times(0,T].
\]
Thus each $v^\ve$ is a (classical, hence viscosity) supersolution of the
first-order equation $\phi_t - (p-1)|\nabla \phi|^2 + \frac at = 0$, whose
Hamiltonian is continuous on $\bM\times(0,T)\times T\bM$. Since
$v^\ve \to v$ locally uniformly, the stability of viscosity supersolutions
of continuous equations under locally uniform convergence -- see
\cite[Remarks 6.3 and 6.4]{CIL} and \cite{Barles} -- yields that $v$ is a viscosity
supersolution of the same equation on $\bM\times(0,T)$, which is
\eqref{LY-final}.

Finally, if $v$ is differentiable at a point $z_0 = (x_0,t_0)$, there
exists a $C^1$ function touching $v$ from below at $z_0$ with the same
differential (see, e.g., \cite{Barles} or \cite[Section 2]{CIL}), and the
supersolution property gives \eqref{LY-final} at $z_0$ in the classical
sense.
\end{proof}

We can finally provide the

\begin{proof}[Proof of Theorem \ref{T:LY}]
The statement is the content of Theorem \ref{T:LY-complete}, valid on
$\bM\times(0,T)$ for every $T>0$, and hence on $\bM\times(0,\infty)$.

\end{proof}

We next address the optimality of the constant $a = \frac{n+p-2}{2(p-1)}$
in \eqref{LY}. Since Theorem \ref{T:LY} concerns closed manifolds, whereas
the extremal profile \eqref{model} lives on $\Rn$, the sharpness assertion
requires a proof; we give one based on a large-torus limit.

\begin{prop}[Sharpness of the constant]\label{P:sharp}
We have the following statements:
\begin{itemize}
\item[(i)] The function $G_p$ in \eqref{model} is a positive viscosity
solution of \eqref{0} on $\Rn\times(0,\infty)$, and, with $v = \log G_p$, we have at every $(x,t)\in\Rn\times(0,\infty)$,
\begin{equation}\label{eq-model}
(p-1)|\nabla v|^2 - v_t  =  \frac{n+p-2}{2(p-1) t},
\end{equation}
i.e., $G_p$ verifies \eqref{LY} identically.
\item[(ii)] The constant in \eqref{LY} cannot be improved within the class
of closed manifolds: if $a' \ge 0$ is such that
\[
(p-1)|\nabla v|^2 - v_t \le \frac{a'}t
\]
holds for all solutions as in Theorem
\ref{T:LY}, on all closed $\bM$ with $\Ric \ge 0$, then $a' \ge a$.
\end{itemize}
\end{prop}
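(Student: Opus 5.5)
For (i) I will compute directly with $v = \log G_p = -a\log t - \frac{|x|^2}{4(p-1)t}$, where $a = \frac{n+p-2}{2(p-1)}$. This gives
\[
\nabla v = -\frac{x}{2(p-1)t},\qquad \nabla^2 v = -\frac{1}{2(p-1)t}\, I,\qquad v_t = -\frac{a}{t} + \frac{|x|^2}{4(p-1)t^2}.
\]
Since $\nabla^2 v$ is a multiple of the identity, for every unit $\xi$ we get $\mathcal L(\xi)v = -\frac{n+p-2}{2(p-1)t} = -\frac at$. Also $(p-1)|\nabla v|^2 = \frac{|x|^2}{4(p-1)t^2}$, so $(p-1)|\nabla v|^2 - v_t = \frac at$, which is \eqref{eq-model}.

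For the equation itself I work with $u = G_p$. At points $x\neq 0$ we have $\nabla u\ne0$, and the identity $v_t = \mathcal L(\xi)v + (p-1)|\nabla v|^2$ is exactly \eqref{me4}, which is equivalent to \eqref{0} there. On the line $x=0$ the function is smooth and $\nabla u = 0$, while $\nabla^2 u = -\frac{u}{2(p-1)t}I$. Any $C^{2,1}$ test function touching $u$ at such a point has the same gradient, $0$. Its Hessian is $\ge$ or $\le \nabla^2u$, depending on the side of contact, and its time derivative equals $u_t$. So I need to check the envelope inequalities \eqref{envelopes} for $X=\nabla^2 u + P$ with $P\ge0$ (subsolution) or $P\le 0$ (supersolution). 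With $X = \lambda I$, $\lambda = -\frac{u}{2(p-1)t}$, both envelopes equal $(n+p-2)\lambda = -\frac{a}{t}u = u_t$ at $x=0$. Monotonicity of $F^*$ and $F_*$ in $X$ (degenerate ellipticity) then gives $\varphi_t = u_t \le F^*(0,\nabla^2\varphi)$ and the reverse inequality with $F_*$. Equivalently, I can note that $u$ is a classical $C^\infty$ function and that $X(\hat q,\hat q)$ is constant in $\hat q$ for a scalar matrix, so $F$ extends continuously at $(0,\lambda I)$.

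For (ii) I argue by contradiction with a large-torus limit. Suppose $a'<a$ works on all closed $\bM$ with $\Ric\ge0$. Fix a compact set $K\subset\Rn\times(0,\infty)$ containing $(x_1,1)$ with $x_1\ne0$. On $\mathbb T_L=\Rn/(L\mathbb Z)^n$ I take the viscosity solution $u_L$ whose initial datum is smooth, positive, $L$-periodic, and approximates the Dirac-like datum of $G_p$ after a time shift. Concretely, I use the datum $u_L(\cdot,0)=\chi_L(x)\,G_p(x,\tau)+\eta_L$, where $\tau>0$ is small, $\chi_L$ is a smooth cutoff equal to $1$ on the fundamental cell away from its boundary and periodically continued, and $\eta_L\to0$ is a positive floor. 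The comparison principle of \Hcomp, applied on $\Rn$ for bounded solutions \cite{BG}, combined with stability, should give $u_L\to G_p(\cdot,\cdot+\tau)$ locally uniformly as $L\to\infty$. The mechanism is that the periodized datum differs from $G_p(\cdot,\tau)$ only by an amount that is exponentially small on compacts, together with far contributions; finite-speed-like Gaussian barriers (translates of $G_p$ itself, and the constant supersolution) control those far contributions.

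Then I pass the Li-Yau inequality $v_t - (p-1)|\nabla v|^2 + \frac{a'}{t}\ge0$, which holds in the viscosity supersolution sense, to the limit. Stability of viscosity supersolutions of this continuous Hamilton-Jacobi equation under local uniform convergence of $\log u_L$ gives the same supersolution property for $\log G_p(\cdot,\cdot+\tau)$. Since that function is smooth, the inequality holds pointwise, so $\frac{a}{t+\tau}\le\frac{a'}{t}$ everywhere. Letting $\tau\to0$ gives $a\le a'$, a contradiction.

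The main obstacle is the convergence $u_L\to G_p(\cdot+\tau)$, because comparison on the torus versus on $\Rn$ must be linked. My first attempt will be to lift $u_L$ to a bounded periodic solution on $\Rn$, so that \cite[Theorem 4.2]{BG} applies to it. I will then sandwich it between the bounded $\Rn$-solutions with data $G_p(\cdot,\tau)$ and $G_p(\cdot,\tau)+\sum_{k\ne0}G_p(\cdot+Lk,\tau)+\eta_L$. The upper datum is not a solution, because the equation is nonlinear, so as a fallback I will bound it above by $(1+\theta_L)\,G_p(\cdot,\tau)$ plus a constant on each fixed compact, using that the equation is invariant under $u\mapsto\lambda u$ and under adding constants.
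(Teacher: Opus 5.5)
\textbf{Verdict.} Part (i) is correct and agrees with the paper; your treatment of the line $x=0$, via monotonicity of the envelopes, is more detailed than the paper's and fine. In (ii) your architecture is the paper's: flat tori, lifting to bounded periodic solutions on $\Rn$, \cite[Theorem 4.2]{BG}, stability of Hamilton--Jacobi supersolutions, and evaluation on a smooth limit. However, the step you flag yourself, $u_L\to G_p(\cdot,\cdot+\tau)$, is not established by the mechanism you commit to. Your sentence about ``comparison combined with stability'' names the right tools, but the sandwich you propose to carry it out does not close.

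\textbf{Where the sandwich fails.} Comparison on $\Rn$ requires the data to be ordered on all of $\Rn$. The periodized datum has a bump of height about $G_p(0,\tau)=\tau^{-a}$ at every $Lk$, $k\neq0$. Any global bound of the form $(1+\theta_L)G_p(\cdot,\tau)+c$ therefore forces $c\gtrsim\tau^{-a}$, which is useless. A bound valid only on a fixed compact set cannot be fed into \cite[Theorem 4.2]{BG} at all. Showing that the far bumps do not reach a fixed compact set in bounded time is a localization (finite-propagation) statement for a degenerate nonlinear equation. The ``Gaussian barriers'' you allude to are never constructed, and, as you note, the solution with the summed datum is not the sum of the translates.

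\textbf{The missing idea.} No control of the far field is needed, and this is how the paper proceeds. The lifted solutions are bounded uniformly in $L$, so their half-relaxed limits as $L\to\infty$ are a bounded u.s.c.\ subsolution $\overline u$ and a bounded l.s.c.\ supersolution $\underline u$ on $\Rn$. One only has to show that both equal the limit datum at $t=0$. That requires just local uniform convergence of the data and barriers at $t=0$ that are uniform in $L$. Then \cite[Theorem 4.2]{BG} gives $\overline u\le\underline u$, hence local uniform convergence.

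\textbf{How the paper arranges the barriers.} It runs the limit jointly in $(\ve,L^{-1})$ with an $L$-independent datum $g_\mu$. This datum is constant outside a fixed ball and lies between $\max\{G_p(\cdot,1),\mu\}$ and $\max\{G_p(\cdot,1),\mu\}+\mu$, so $\log g_\mu$ has $L$-independent $C^2$ bounds and the barriers \eqref{barriers} are uniform. The limit $u_\mu$ is then squeezed globally on $\Rn$, $G_p(x,1+t)\le u_\mu\le G_p(x,1+t)+2\mu$. This is legitimate because $g_\mu$ itself has no periodic bumps, and finally $\mu\to0$.

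\textbf{Adapting this to your datum.} With $\eta_L\to0$, $\nabla\log u_L(\cdot,0)$ is not bounded uniformly in $L$. You have two options.
Either freeze the floor: keep $\eta$ fixed and send it to $0$ only after $L\to\infty$.
Or use barriers at the level of $u$, with $g=G_p(\cdot,\tau)$ and $C$ chosen uniformly in $L$:
\[
\min\{\sup g_L,\ g(x_0)+\epsilon+C|x-x_0|^2+2C(n+p-2)t\}.
\]
The quadratic term is a supersolution because its Hessian is $2CI$, so both envelopes equal $2C(n+p-2)$, and a minimum of supersolutions is a supersolution. Use the symmetric subsolution from below.

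\textbf{A smaller fix.} Take $\chi_L$ supported in the open cell and periodize the product. A periodic $\chi_L$ times the non-periodic $G_p(\cdot,\tau)$ is not a function on the torus.

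With these changes your conclusion, $a/(t+\tau)\le a'/t$ followed by $\tau\to0$ (or $t\to\infty$ as in the paper), goes through.
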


\begin{proof}
(i) With $v = \log G_p = - \frac{n+p-2}{2(p -1)} \log t - \frac{|x|^2}{4(p-1)t}$, a direct computation gives
\[
\nabla v = -\frac{x}{2(p-1)t},\ \ \ \ \nabla^2 v = -\frac{I}{2(p-1)t},\ \ \ \ v_t = - \frac{n+p-2}{2(p -1) t} + \frac{|x|^2}{4(p-1)t^2},
\]
so that, at points $x \not= 0$, $\Delta v + (p-2)\nabla^2v(\xi,\xi) = -\frac{n+p-2}{2(p-1)t}$ and
$v_t = \Delta v + (p-2)\nabla^2v(\xi,\xi) + (p-1)|\nabla v|^2$, i.e., $G_p$ solves
\eqref{0} classically on $\{x\not=0\}$; since $\nabla^2 v$ is a multiple of
the identity, the viscosity conditions at $x = 0$ are verified as well.
The identity \eqref{eq-model} follows at once from the two displays.

(ii) The argument is a large-torus limit. For $L \ge 1$ let
$\bM_L := \Rn/(L\,\mathbb Z)^n$ be the flat torus, which is closed with
$\Ric \equiv 0$. Fix $\mu \in (0,1)$ and let $R_\mu > 0$ be such that
$G_p(x,1) < \mu$ for $|x| \ge R_\mu$. Let $g_\mu \in C^\infty(\Rn)$ be a
radial function with
\[
\max\{G_p(\cdot,1),\, \mu\} \;\le\; g_\mu \;\le\; \max\{G_p(\cdot,1),\, \mu\} + \mu,
\qquad
g_\mu \equiv \text{const} \in [\mu, 2\mu] \ \text{outside } B_{R_\mu + 1}
\]
(e.g., a smooth regularization of the maximum of $G_p(\cdot,1)$ and $\mu$);
note that $g_\mu$ is defined solely in terms of $\mu$ (through $R_\mu$ and
the fixed profile $G_p(\cdot,1)$), with no reference to $L$ whatsoever;
consequently $g_\mu$, its derivatives, and $1/g_\mu$ carry bounds fixed
once $\mu$ is fixed, and these bounds hold uniformly over every $L$ for
which the construction below applies. For $L > 4(R_\mu+1)$, $g_\mu$ defines a smooth positive function on
$\bM_L$. Let $u_{L,\mu}$ be the solution of \eqref{0} on
$\bM_L\times(0,\infty)$ with datum $g_\mu$, given by Theorem
\ref{T:LY-complete} via Hypotheses \ref{H:solv} and \ref{H:comp}; by comparison with constants,
$\mu \le u_{L,\mu} \le \max g_\mu$, uniformly in $L$. Regarding the
$u_{L,\mu}$ as periodic functions on $\Rn$ and repeating verbatim the
half-relaxed limit argument in the proof of Theorem \ref{T:LY-complete}
-- now in the joint parameter $(\ve, L^{-1}) \to (0,0)$, the barrier
constant $C_*$ in \eqref{barriers} being independent of $L$ because
$\log g_\mu$ has $L$-independent $C^2$ bounds -- and invoking the
comparison principle on $\Rn$ \cite{Do, BG} for bounded semicontinuous sub-
and supersolutions, we obtain
\[
u_{L,\mu} \longrightarrow u_\mu \qquad \text{locally uniformly on } \Rn\times[0,\infty),
\]
where $u_\mu$ is the unique bounded viscosity solution of \eqref{0} on
$\Rn$ with datum $g_\mu$. Since the operator in \eqref{0} depends only on
the derivatives of $u$, the equation is invariant under $u \mapsto u + c$;
as $G_p(\cdot,1) \le g_\mu \le G_p(\cdot,1) + 2\mu$, comparison therefore
gives
\begin{equation}\label{squeeze}
G_p(x,1+t) \;\le\; u_\mu(x,t) \;\le\; G_p(x,1+t) + 2\mu .
\end{equation}
Now assume $(p-1)|\nabla v|^2 - v_t \le a'/t$ holds for all solutions as
in Theorem \ref{T:LY} on all closed manifolds with $\Ric \ge 0$; in
particular, each $v_{L,\mu} := \log u_{L,\mu}$ is a viscosity
supersolution of the Hamilton-Jacobi equation
\[
\phi_t - (p-1)|\nabla\phi|^2 + \frac{a'}t = 0.
\]
By stability
under locally uniform convergence, so is $v_\mu := \log u_\mu$, and, by
\eqref{squeeze}, letting $\mu \downarrow 0$ (locally uniform convergence
again, since $G_p(\cdot,1+t)$ is locally bounded away from zero), so is
$v_\infty(x,t) := \log G_p(x,1+t)$. Since $v_\infty$ is smooth, the
inequality holds classically, and \eqref{eq-model} (with $t$ replaced by
$1+t$) gives
\[
\frac{n+p-2}{2(p-1)(1+t)} = (p-1)|\nabla v_\infty|^2 - (v_\infty)_t \;\le\; \frac{a'}{t}
\qquad \text{for all } t>0,
\]
and letting $t \to \infty$ yields $a' \ge \frac{n+p-2}{2(p-1)} = a$.
\end{proof}

\begin{rmrk}\label{R:noncompact}
The pointwise ingredients of the proof (Lemma \ref{L:bochner-eps} and
Proposition \ref{P:coercive-eps}) are local and make no use of the
compactness of $\bM$. On a complete noncompact manifold with $\Ric \ge 0$
they can be combined with standard localization devices (Gaussian
barriers, or a cutoff version of the maximum principle argument in the
spirit of \cite{LY}), the extra drift terms being absorbed, via Young's inequality, by the coercive term
$2h_\ve'\,\mathcal Q_\ve$. This program is carried
out in Section \ref{S:alpha}, where we prove the Li-Yau inequality on
closed manifolds with $\Ric \ge -\kappa$ (Theorem \ref{T:LY-kappa}), and
the sharp inequality on complete noncompact manifolds for the flows
\eqref{Eeps}, with no assumption on the critical set (Theorem
\ref{T:noncompact}).
\end{rmrk}

\section{Negative curvature and the noncompact case}\label{S:alpha}

In the classical Li-Yau theory \cite{LY}, the passage from closed manifolds with $\Ric \ge 0$ to manifolds with Ricci curvature merely bounded from below, and to complete noncompact manifolds, is effected by relaxing the functional to
\[
F_\alpha = t\big(|\nabla v|^2 - \alpha\, v_t\big) - \frac{n\alpha^2}{2}, \qquad \alpha > 1:
\]
the additional coercivity of order $\alpha - 1$ absorbs both the negative curvature terms and the errors generated by the localization. In this section we show that the machinery of Section \ref{S:critical} admits an exact $\alpha$-parametrized extension, with two features worth emphasizing: the Bochner identity of Lemma \ref{L:bochner-eps} is \emph{form-invariant} under the $\alpha$-deformation (Lemma \ref{L:alpha-id}), and the coercivity inequality acquires an additional good term proportional to $(\alpha-1)|\nabla v|^2$ (Proposition \ref{P:alpha-coercive}). As applications we prove the Li-Yau inequality on closed manifolds with $\Ric \ge -\kappa$ (Theorem \ref{T:LY-kappa}), and the sharp inequality on complete noncompact manifolds for the approximating flows, with no assumption on the critical set, under the growth \Hgrow (Theorem \ref{T:noncompact}). The remaining steps toward the unconditional noncompact result are isolated in Remark \ref{R:obstruction}.

\subsection{The $\alpha$-invariant Bochner identity and coercivity}

In the following result we use the notation of Lemma \ref{L:bochner-eps}, and denote
\[
s = |\nabla v|^2,\ \ \ \ \ w = h(s) - v_t = -L_\ve v.
\]

\begin{lemma}[$\alpha$-invariance]\label{L:alpha-id}
Let $\ve>0$, $h \in C^\infty([0,\infty))$, and let $v$ be a smooth solution on $\bM\times(0,T]$ of 
\[
v_t = L_\ve v + h(|\nabla v|^2).
\]
For $\alpha \in \R$ define
\begin{equation}\label{walpha-def}
w_\alpha := h(s) - \alpha\, v_t .
\end{equation}
Then, at every point of $\bM\times(0,T]$, with $\mathcal Q_\ve$, $B_\ve$, $A_\ve$ exactly as in \eqref{QB-def}--\eqref{Aeps-drift}, we have
\begin{equation}\label{id-alpha}
L_\ve w_\alpha - (w_\alpha)_t
= 2h'(s)\Big[\mathcal Q_\ve + \Ric(\nabla v,\nabla v)\Big]
+ h''(s)\, a^{ij}_\ve\, s_i s_j
- 2\big\langle \nabla w_\alpha, A_\ve \big\rangle .
\end{equation}
In other words, the identity \eqref{bochner-eps} is form-invariant under the deformation $w \rightsquigarrow w_\alpha$: neither the right-hand side nor the drift field changes.
\end{lemma}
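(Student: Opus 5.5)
The plan is to reduce the claim to Lemma \ref{L:bochner-eps} by linearity, rather than redo the Bochner computation. Write
\[
w_\alpha = h(s) - \alpha v_t = w + (1-\alpha)\,v_t, \qquad w = h(s)-v_t = -L_\ve v .
\]
Since $L_\ve$ and $\partial_t$ are linear in the argument (the coefficients $a^{ij}_\ve$ depend on $\nabla v$, not on the function being differentiated), we have
\[
L_\ve w_\alpha - (w_\alpha)_t = \big(L_\ve w - w_t\big) + (1-\alpha)\big(L_\ve(v_t) - v_{tt}\big).
\]
The first bracket is given by \eqref{bochner-eps}. It then suffices to show
\[
L_\ve(v_t) - v_{tt} = -2\langle \nabla v_t, A_\ve\rangle .
\]
Once this holds, the two drift terms combine into $-2\langle \nabla w + (1-\alpha)\nabla v_t, A_\ve\rangle = -2\langle \nabla w_\alpha, A_\ve\rangle$, and the curvature, $\mathcal Q_\ve$, and $h''$ terms are untouched. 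This is exactly \eqref{id-alpha}.

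To prove the displayed identity for $v_t$, I would reuse Steps 2 of the proof of Lemma \ref{L:bochner-eps}. By \eqref{Lvt},
\[
L_\ve(v_t) - v_{tt} = -(a^{ij}_\ve)_t v_{ij} - h'(s)s_t .
\]
By \eqref{at}, $(a^{ij}_\ve)_t v_{ij} = 2(p-2)\langle B_\ve,\nabla v_t\rangle$. Moreover $s_t = 2\langle\nabla v,\nabla v_t\rangle$, so
\[
L_\ve(v_t) - v_{tt} = -2\big\langle \nabla v_t,\ h'(s)\nabla v + (p-2)B_\ve\big\rangle = -2\langle\nabla v_t, A_\ve\rangle,
\]
with $A_\ve$ as in \eqref{Aeps-drift}. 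Conceptually, $v_t$ satisfies the linearized equation, whose first-order part is precisely the drift $A_\ve$. The identity is also consistent with the case $\alpha=1$, which recovers Lemma \ref{L:bochner-eps}, and with $\alpha=0$, where $w_0=h(s)$.

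There is no real obstacle here. The only points to check are that \eqref{Lvt} and \eqref{at} were derived without using the specific combination $w$, which they were: they come only from differentiating the equation in $t$. One should also note that all quantities are smooth, since $\sigma\ge\ve^2$, so the identity holds at every point of $\bM\times(0,T]$ with no nondegeneracy assumption.
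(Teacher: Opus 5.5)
Your proof is correct, and it follows a different route from the paper's.

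The paper writes $w_\alpha = \alpha w - (\alpha-1)h(s)$ and then recomputes
\[
L_\ve(h(s)) - (h(s))_t = h'(s)\,(L_\ve s - s_t) + h''(s)\,a^{ij}_\ve s_is_j
\]
from the spatial Bochner formula \eqref{Ls}. It combines this with the expression for $s_t$ and with $\langle\nabla s, A_\ve\rangle = 2h'\,S(\nabla v,\nabla v) + 2(p-2)\mathcal P_\ve$. It then tracks three separate cancellations: the $(h')^2 S(\nabla v,\nabla v)$ terms, the $h'\mathcal P_\ve$ terms, and the excess coefficients of $2h'[\mathcal Q_\ve+\Ric(\nabla v,\nabla v)]$ and of $h''a^{ij}_\ve s_is_j$.

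Since $h(s) = w + v_t$, the two decompositions are algebraically the same. Your form $w_\alpha = w + (1-\alpha)v_t$ needs only the time-commutator step of Lemma \ref{L:bochner-eps}, namely \eqref{Lvt} and \eqref{at}, and never touches the Hessian, curvature or $h''$ terms.

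Your route also explains the invariance. $A_\ve$ is exactly half the $q$-gradient of the nonlinearity $a^{ij}_\ve(q)\,v_{ij} + h(|q|^2)$ at $q=\nabla v$. Hence $\mathcal M\phi := L_\ve\phi + 2\langle\nabla\phi, A_\ve\rangle - \phi_t$ is the linearized operator of the flow, and your identity $L_\ve(v_t) - v_{tt} = -2\langle\nabla v_t, A_\ve\rangle$ just says that $\mathcal M v_t = 0$. Consequently $\mathcal M w_\alpha = \mathcal M w$ for every $\alpha$. More generally, the same identity holds for $w+\phi$ whenever $\mathcal M\phi=0$ (for instance, when $\phi$ is a constant).

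The paper's direct verification yields the formula \eqref{Ls-st} for $L_\ve s - s_t$ along the way, but nothing later depends on it. For the lemma itself, your argument is shorter and makes the structural reason for the form-invariance transparent.
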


\begin{proof}
Since $w_\alpha = \alpha w - (\alpha-1) h(s)$, Lemma \ref{L:bochner-eps} gives
\begin{align}\label{alpha-start}
L_\ve w_\alpha - (w_\alpha)_t
&= \alpha\big[L_\ve w - w_t\big] - (\alpha-1)\big[L_\ve (h(s)) - (h(s))_t\big] \notag \\
&= \alpha\Big\{2h'\big[\mathcal Q_\ve + \Ric(\nabla v,\nabla v)\big] + h''\, a^{ij}_\ve s_is_j - 2\langle \nabla w, A_\ve\rangle\Big\} 
\\
&\quad - (\alpha-1)\Big\{h'\big(L_\ve s - s_t\big) + h''\, a^{ij}_\ve s_is_j\Big\}. \notag
\end{align}
We compute $L_\ve s - s_t$. Differentiating $v_t = h(s) - w$ in space gives $\nabla v_t = h'\,\nabla s - \nabla w$, so that, with $S = \nabla^2v$ and $\langle \nabla s, \nabla v\rangle = 2S(\nabla v,\nabla v)$, we have
\[
s_t = 2\langle \nabla v, \nabla v_t\rangle = 4h'\, S(\nabla v,\nabla v) - 2\langle \nabla w, \nabla v\rangle .
\]
Combining this with \eqref{Ls}, the terms $\mp 2\langle \nabla w,\nabla v\rangle$ cancel and we obtain
\begin{equation}\label{Ls-st}
L_\ve s - s_t = 2\,\mathcal Q_\ve + 2\,\Ric(\nabla v,\nabla v) - 4(p-2)\,\mathcal P_\ve - 4h'\, S(\nabla v,\nabla v),
\end{equation}
with $\mathcal P_\ve$ as in the proof of Lemma \ref{L:bochner-eps}. Next, from $\nabla w_\alpha = \alpha\, \nabla w - (\alpha-1)h'\,\nabla s$ we obtain
\[
-2\alpha\langle \nabla w, A_\ve\rangle = -2\langle \nabla w_\alpha, A_\ve\rangle - 2(\alpha-1)h'\,\langle \nabla s, A_\ve\rangle,
\]
and, since $\langle \nabla s, \nabla v\rangle = 2S(\nabla v,\nabla v)$ and $\langle \nabla s, B_\ve\rangle = 2\mathcal P_\ve$,
\[
\langle \nabla s, A_\ve\rangle = 2h'\,S(\nabla v,\nabla v) + 2(p-2)\,\mathcal P_\ve .
\]
Substituting the last three displays into \eqref{alpha-start}, the terms $\mp 4(\alpha-1)(h')^2 S(\nabla v,\nabla v)$ cancel, the terms $\mp 4(\alpha-1)(p-2)h'\,\mathcal P_\ve$ cancel, the coefficient of $2h'[\mathcal Q_\ve + \Ric]$ becomes $\alpha - (\alpha-1) = 1$, and so does that of $h''\,a^{ij}_\ve s_is_j$. This proves \eqref{id-alpha}.

\end{proof}

\begin{prop}[$\alpha$-coercivity]\label{P:alpha-coercive}
Let $\alpha \ge 1$, let $h = h_\ve$ be as in \eqref{heps}, and define
\[
a_\alpha := \alpha^2 a, \qquad F_\alpha := t\, w_\alpha - a_\alpha .
\]
Then, at every point of $\bM\times(0,T]$ where $w_\alpha \ge 0$,
\begin{align}\label{alpha-coercive}
& L_\ve F_\alpha - (F_\alpha)_t + 2\big\langle \nabla F_\alpha, A_\ve\big\rangle
\ge \frac{F_\alpha(F_\alpha + a_\alpha)}{\alpha^2 a t}
\\
& + \frac{4(p-1)^2}{n+p-2}\,\frac{\alpha-1}{\alpha^2} |\nabla v|^2\,\big(F_\alpha + a_\alpha\big)
+ 2h_\ve'(s) t \Ric(\nabla v,\nabla v).
\notag
\end{align}
\end{prop}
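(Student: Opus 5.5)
The plan is to combine the form-invariant identity of Lemma \ref{L:alpha-id} with the pointwise inequality \eqref{claim-pointwise} from the proof of Proposition \ref{P:coercive-eps}, and then convert from $w_\alpha$ to $F_\alpha$ exactly as in that proof. The only new ingredient is an algebraic lower bound for $w^2$ in terms of $w_\alpha$, and this is where the extra gradient term comes from.

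\emph{Step 1.} By Lemma \ref{L:alpha-id} with $h=h_\ve$,
\[
L_\ve w_\alpha-(w_\alpha)_t+2\langle\nabla w_\alpha,A_\ve\rangle=2h_\ve'\mathcal Q_\ve+h_\ve''a^{ij}_\ve s_is_j+2h_\ve'\Ric(\nabla v,\nabla v).
\]
Inequality \eqref{claim-pointwise} was proved at every point, with no sign assumption. It bounds the first two terms on the right from below by $\frac{2(p-1)}{n+p-2}w^2$, where $w=h_\ve(s)-v_t=-L_\ve v$.

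\emph{Step 2.} Next we rewrite $w$ through $w_\alpha$. From $w_\alpha=h_\ve-\alpha v_t$ we get $\alpha w=w_\alpha+(\alpha-1)h_\ve(s)$. At points where $w_\alpha\ge0$ we have $\alpha\ge1$ and $h_\ve(s)\ge(p-1)s\ge0$. Expanding the square and discarding $(\alpha-1)^2h_\ve^2\ge0$ then gives
\[
w^2\ge\frac{1}{\alpha^2}\Big(w_\alpha^2+2(\alpha-1)(p-1)s\,w_\alpha\Big).
\]
This is the only place where the hypothesis $w_\alpha\ge0$ enters: it guarantees that the cross term has a favourable sign. I expect this to be the one delicate point, and it is elementary.

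\emph{Step 3.} Finally we pass to $F_\alpha=tw_\alpha-a_\alpha$. As in the proof of Proposition \ref{P:coercive-eps},
\[
L_\ve F_\alpha-(F_\alpha)_t+2\langle\nabla F_\alpha,A_\ve\rangle=t\big(L_\ve w_\alpha-(w_\alpha)_t+2\langle\nabla w_\alpha,A_\ve\rangle\big)-w_\alpha,
\]
with $w_\alpha=(F_\alpha+a_\alpha)/t$. We use $\frac{2(p-1)}{n+p-2}=\frac1a$ and $a_\alpha=\alpha^2a$.

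The quadratic part then becomes
\[
\frac{(F_\alpha+a_\alpha)^2}{\alpha^2at}-\frac{F_\alpha+a_\alpha}{t}=\frac{F_\alpha(F_\alpha+a_\alpha)}{\alpha^2at}.
\]
The cross term, multiplied by $t$, gives
\[
\frac{4(p-1)^2}{n+p-2}\frac{\alpha-1}{\alpha^2}|\nabla v|^2(F_\alpha+a_\alpha).
\]
The curvature term gives $2h_\ve'(s)\,t\Ric(\nabla v,\nabla v)$. Adding the three contributions yields \eqref{alpha-coercive}.
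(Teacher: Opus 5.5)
Your proposal is correct and follows the paper's proof essentially step for step. Both arguments combine Lemma~\ref{L:alpha-id} with the pointwise bound \eqref{claim-pointwise}. Both then use the estimate $w^2 \ge \alpha^{-2}\big(w_\alpha^2 + 2(\alpha-1)(p-1)s\,w_\alpha\big)$, which follows from $\alpha w = w_\alpha + (\alpha-1)h_\ve(s)$ together with $w_\alpha \ge 0$ and $h_\ve(s)\ge (p-1)s$. Finally, both convert to $F_\alpha$ via $w_\alpha=(F_\alpha+a_\alpha)/t$ and $\frac{2(p-1)}{n+p-2}=\frac1a$.
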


\begin{proof}
By Lemma \ref{L:alpha-id} and the pointwise inequality \eqref{claim-pointwise},
\[
L_\ve w_\alpha - (w_\alpha)_t + 2\langle \nabla w_\alpha, A_\ve\rangle
\;\ge\; \frac{2(p-1)}{n+p-2}\, w^2 + 2h_\ve'(s)\,\Ric(\nabla v,\nabla v).
\]
Since 
\[
w = \big[w_\alpha + (\alpha-1)h_\ve(s)\big]/\alpha\ \ \ \ \text{and}\ \ \ \  (\alpha-1)^2 h_\ve(s)^2 \ge 0,
\]
 we have, unconditionally,
\[
w^2  \ge \frac{w_\alpha^2 + 2(\alpha-1) h_\ve(s) w_\alpha}{\alpha^2},
\]
and at points where $w_\alpha \ge 0$ we may further use $h_\ve(s) \ge (p-1)s$ to obtain
\[
w^2 \;\ge\; \frac{w_\alpha^2 + 2(\alpha-1)(p-1)\, s\, w_\alpha}{\alpha^2} .
\]
Multiplying by $t$ and subtracting $w_\alpha$, using 
\[
F_\alpha = tw_\alpha - a_\alpha,\ \ \nabla F_\alpha = t\,\nabla w_\alpha, \ \ w_\alpha = (F_\alpha + a_\alpha)/t,
\]
 and 
 \[
 \frac{2(p-1)}{n+p-2} = \frac 1a,
 \]
we find
\[
\frac{t}{\alpha^2 a} w_\alpha^2 - w_\alpha = \frac{w_\alpha}{\alpha^2 a}\big(t w_\alpha - \alpha^2 a\big) = \frac{F_\alpha (F_\alpha + a_\alpha)}{\alpha^2 a t},
\]
which is the first term on the right-hand side of \eqref{alpha-coercive}. It remains to identify the contribution of the term $2(\alpha-1)(p-1)\,s\,w_\alpha$ that was dropped from the numerator in passing from \eqref{alpha-coercive} to the last displayed formula above: this is the \emph{cross term}, and we now show it contributes exactly the second term on the right-hand side of \eqref{alpha-coercive}. Multiplying the bound on $w^2$ by $t/a$ and keeping this term separately gives
\[
\frac{t}{a}\,w^2 \;\ge\; \frac{t}{a\alpha^2}\,w_\alpha^2 \;+\; \frac{2(\alpha-1)(p-1)}{a\alpha^2}\, t\,s\, w_\alpha ,
\]
so that, after subtracting $w_\alpha$ and using the identity just established for the first piece, the cross term
\[
\frac{2(\alpha-1)(p-1)}{a\alpha^2}\, t\,s\, w_\alpha
\]
is exactly what remains to be accounted for. Substituting $t\,w_\alpha = F_\alpha + a_\alpha$ and $s = |\nabla v|^2$, and using $\dfrac{2(p-1)}{a} = \dfrac{4(p-1)^2}{n+p-2}$ (which follows at once from $\dfrac{2(p-1)}{n+p-2} = \dfrac1a$), we obtain
\[
\frac{2(\alpha-1)(p-1)}{a\alpha^2}\, t\,s\, w_\alpha
= \frac{2(\alpha-1)(p-1)}{a\alpha^2}\, |\nabla v|^2\,\big(F_\alpha + a_\alpha\big)
= \frac{4(p-1)^2}{n+p-2}\,\frac{\alpha-1}{\alpha^2}\, |\nabla v|^2\,\big(F_\alpha + a_\alpha\big),
\]
which is precisely the second term on the right-hand side of \eqref{alpha-coercive}. Combining this with the identity above for the first term, and with $2th_\ve'(s)\Ric(\nabla v,\nabla v)$ coming unchanged from the Ricci term in the pointwise inequality, together with 
\[
LF_\alpha + 2\langle \nabla F_\alpha, A_\ve\rangle - (F_\alpha)_t = t\big(L_\ve w_\alpha - (w_\alpha)_t + 2\langle \nabla w_\alpha, A_\ve\rangle\big) - w_\alpha,
\]
 proves \eqref{alpha-coercive}.

\end{proof}

\subsection{Closed manifolds with $\Ric \ge -\kappa$}

The first consequence of the $\alpha$-machinery is that Theorem \ref{T:LY-complete} extends to Ricci curvature bounded from below, with no further work at the critical set. Set
\begin{equation}\label{cp-def}
c_p := 1 \ \ \text{for } p \ge 2, \qquad c_p := 1 + \frac{2(2-p)}{n+p-2} \ \ \text{for } 1<p<2,
\end{equation}
and note that, by \eqref{deltaK}--\eqref{heps-props}, one has the exact relation
\begin{equation}\label{hprime-cp}
\sup_{s \ge 0}\, h_\ve'(s) \;=\; p-1+K \;=\; (p-1)\, c_p .
\end{equation}

\begin{thrm}[Li-Yau inequality for $\Ric \ge -\kappa$]\label{T:LY-kappa}
Let $\bM$ be a closed manifold with $\Ric \ge -\kappa\, g$ for some $\kappa \ge 0$, and let $u$ be as in Theorem \ref{T:LY-complete}. Then, for every $t \in (0,T)$, with
\[
\alpha = \alpha(t) := 1 + c_p\,\kappa\, t,
\]
one has, in the viscosity sense (hence pointwise wherever $v = \log u$ is differentiable),
\begin{equation}\label{LY-kappa}
(p-1)|\nabla v|^2 - \alpha\, v_t \;\le\; \frac{\alpha^2\, (n+p-2)}{2(p-1)\, t}.
\end{equation}
When $\kappa = 0$ this is \eqref{LY-final}.
\end{thrm}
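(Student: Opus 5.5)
The plan is to run the maximum principle argument of Theorem \ref{T:LY-eps} for the approximating flows \eqref{Eeps}. The Li-Yau functional is replaced by its $\alpha$-relaxed version with a \emph{time-dependent} parameter $\alpha(t) = 1+c_p\kappa t$. Once the bound is proved uniformly in $\ve$, we pass to the limit exactly as in the proof of Theorem \ref{T:LY-complete}. Concretely, let $v = v^\ve$ be the solution of \eqref{Eeps} given by \Hsolv. We set
\[
F := t\big(h_\ve(s) - \alpha(t) v_t\big) - \alpha(t)^2 a = t\,w_\alpha - a_\alpha ,
\]
and aim to show $F\le 0$ on $\bM\times[0,T]$. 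As before, we consider $Q_\nu = F-\nu t$. Since $F(\cdot,0)=-a<0$, a positive maximum would occur at some $(x_0,t_0)$ with $t_0>0$, where $\nabla F=0$, $F_t\ge \nu$ and $L_\ve F\le 0$. No condition on $\nabla v(x_0,t_0)$ is needed. At such a point $F>0$, hence $w_\alpha>0$, so Proposition \ref{P:alpha-coercive} applies there, with $\alpha$ frozen at the value $\alpha(t_0)$.

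The new ingredient is the bookkeeping for $\alpha'(t)=c_p\kappa$. Differentiating in $t$ gives
\[
F_t = \big[\text{the }F_t\text{ of Proposition \ref{P:alpha-coercive} at fixed }\alpha\big] - t\alpha' v_t - 2\alpha\alpha' a .
\]
Using $v_t = (h_\ve - w_\alpha)/\alpha$ and $t w_\alpha = F+a_\alpha$, the extra contribution to $L_\ve F - F_t + 2\langle\nabla F,A_\ve\rangle$ is
\[
\frac{t\alpha' h_\ve(s)}{\alpha} - \frac{\alpha'(F+\alpha^2 a)}{\alpha} + 2\alpha\alpha' a \;\ge\; \alpha'\Big(\alpha a - \frac{F}{\alpha}\Big).
\]
Here we used $h_\ve\ge 0$. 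Next we treat the curvature term. By $\Ric\ge-\kappa$ and \eqref{hprime-cp},
\[
2h_\ve' t\Ric(\nabla v,\nabla v)\ge -2(p-1)c_p\kappa t\, s .
\]
Since $F+a_\alpha\ge \alpha^2 a$ at the maximum point, the gradient term in \eqref{alpha-coercive} is at least
\[
\frac{4(p-1)^2}{n+p-2}(\alpha-1)\,a\,s = 2(p-1)(\alpha-1)s = 2(p-1)c_p\kappa t\,s ,
\]
and this cancels the Ricci loss exactly. This is precisely why $\alpha-1=c_p\kappa t$ is the right choice. Finally, the first term of \eqref{alpha-coercive} is bounded below by
\[
\frac{F(F+a_\alpha)}{\alpha^2 a t}\ge \frac Ft ,
\]
and $\frac Ft > \frac{c_p\kappa}{1+c_p\kappa t}F = \frac{\alpha'}{\alpha}F$. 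Hence at $(x_0,t_0)$ we get
\[
0\ge L_\ve F - F_t+\nu > \alpha'\alpha a + \nu>0,
\]
which is a contradiction. Letting $\nu\to0$ gives $F\le 0$ for every $\ve$.

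Since $h_\ve(s)\ge (p-1)s$, the bound $F\le0$ yields $(p-1)|\nabla v^\ve|^2-\alpha v^\ve_t\le \alpha^2 a/t$ classically. Each $v^\ve$ is therefore a supersolution of the first-order equation
\[
\alpha(t)\phi_t-(p-1)|\nabla\phi|^2+\alpha(t)^2 a/t=0 ,
\]
whose Hamiltonian is continuous on $t>0$. The half-relaxed limit argument and \Hcomp, verbatim as in Theorem \ref{T:LY-complete}, give $v^\ve\to v=\log u$ locally uniformly. Stability of viscosity supersolutions then yields \eqref{LY-kappa}. I expect the main obstacle to be the careful accounting of the $\alpha'$ terms, together with checking that the inequality $w_\alpha\ge0$ required in Proposition \ref{P:alpha-coercive} is available where it is used. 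Both are handled by restricting to the positive maximum point, as above.
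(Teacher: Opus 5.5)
Your proof is correct, but it handles $\alpha$ differently from the paper.

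\textbf{The paper's route.} It fixes $T_0\in(0,T)$ and freezes $\alpha:=1+c_p\kappa T_0$ as a constant on $\bM\times[0,T_0]$. Proposition~\ref{P:alpha-coercive} then applies verbatim, with no $\alpha'$ terms at all. The Ricci loss $2h_\ve'\kappa t_0 s\le 2(p-1)c_p\kappa t_0 s$ is dominated, with slack, by $2(p-1)(\alpha-1)s$, since $t_0\le T_0$. The price is that the bound $F_\alpha\le 0$ carries the parameter $\alpha(t)$ only at the terminal time $t=T_0$. One must therefore vary $T_0$, and in the viscosity limit let $T_0\downarrow t$ to recover the coefficient $\alpha(t)$ at an interior point.

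\textbf{Your route.} You take $\alpha(t)=1+c_p\kappa t$ genuinely time dependent, freezing it only for the spatial derivatives. The Ricci cancellation then becomes exact at the maximum point. You pay with the extra terms $t\alpha' v_t+2\alpha\alpha' a$, which you correctly rewrite as $\frac{t\alpha' h_\ve}{\alpha}-\frac{\alpha'}{\alpha}F+\alpha\alpha' a$. The only unfavorable term, $-\frac{\alpha'}{\alpha}F$, is absorbed by the lower bound $\frac{F}{t}$ from the first coercive term, precisely because $t\alpha'/\alpha<1$. This leaves $\frac{F}{\alpha t}>0$ and the contradiction.

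\textbf{What each buys.} Yours is the classical time-dependent Li-Yau device. It gives $F_{\alpha(t)}\le 0$ on the whole slab in a single maximum-principle run. The limit passage then targets one Hamilton-Jacobi equation, $\alpha(t)\phi_t-(p-1)|\nabla\phi|^2+\alpha(t)^2a/t=0$, with coefficients continuous in $t$, so the viscosity statement is slightly more direct than in the paper. The paper's version needs no bookkeeping beyond Proposition~\ref{P:alpha-coercive} and works for any constant $\alpha\ge 1+c_p\kappa T_0$. Yours relies on the structural conditions $\alpha'\ge 0$, $t\alpha'\le\alpha$ and $\alpha-1\ge c_p\kappa t$, all of which your linear choice of $\alpha$ satisfies.
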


\begin{proof}
It suffices to prove that, for every $\ve > 0$, every $T_0 \in (0,T)$, and $\alpha := 1 + c_p\kappa T_0$, the solutions $v = v^\ve$ of \eqref{Eeps} satisfy $F_\alpha \le 0$ on $\bM\times[0,T_0]$; one then applies the estimate at time $t = T_0$, and passes to the limit $\ve \to 0$ exactly as in the proof of Theorem \ref{T:LY-complete} (each $v^\ve$ being a classical supersolution of the Hamilton-Jacobi equation $\phi_t - \frac{p-1}{\alpha}|\nabla \phi|^2 + \frac{\alpha a}{t} = 0$).

Fix $\nu > 0$ and let $Q_\nu := F_\alpha - \nu t$. Since $F_\alpha(\cdot,0) = -a_\alpha < 0$, if $Q_\nu$ were positive somewhere, it would attain a positive maximum at some $(x_0,t_0) \in \bM\times(0,T_0]$, where
\[
\nabla F_\alpha = 0, \qquad (F_\alpha)_t \ge \nu, \qquad L_\ve F_\alpha \le 0,
\]
and $F_\alpha(x_0,t_0) > \nu t_0 > 0$, so that $w_\alpha = (F_\alpha + a_\alpha)/t_0 > 0$. By Proposition \ref{P:alpha-coercive} and $\Ric \ge -\kappa g$, at $(x_0,t_0)$,
\[
0 \;\ge\; \frac{F_\alpha(F_\alpha+a_\alpha)}{\alpha^2 a t_0} + s\left[\frac{4(p-1)^2}{n+p-2}\frac{\alpha-1}{\alpha^2}(F_\alpha+a_\alpha) - 2h_\ve'(s)\,\kappa\, t_0\right] + \nu .
\]
Since $F_\alpha + a_\alpha \ge a_\alpha = \alpha^2 a$, the definition of $a$ gives
\[
\frac{4(p-1)^2}{n+p-2}\,\frac{\alpha-1}{\alpha^2}\,(F_\alpha + a_\alpha)
\;\ge\; \frac{4(p-1)^2 a}{n+p-2}\,(\alpha-1)
\;=\; 2(p-1)(\alpha - 1),
\]
while, by \eqref{hprime-cp}, $2h_\ve'(s)\kappa t_0 \le 2(p-1)c_p \kappa t_0 \le 2(p-1)(\alpha-1)$, because $\alpha - 1 = c_p\kappa T_0 \ge c_p \kappa t_0$. Hence the bracket is nonnegative, and we conclude $0 \ge \nu > 0$, a contradiction. Thus $Q_\nu \le 0$ for every $\nu>0$, i.e., $F_\alpha \le 0$ on $\bM\times[0,T_0]$, which by \eqref{Feps-dominates} (whose proof applies verbatim to $w_\alpha$, since $h_\ve \ge (p-1)s$ and $\alpha > 0$) yields \eqref{LY-kappa} for $v^\ve$ at $t = T_0$.
\end{proof}

\begin{rmrk}
Two exact numerical coincidences make the proof clean: the identity $\frac{4(p-1)^2 a}{n+p-2} = 2(p-1)$, and the identity \eqref{hprime-cp}, which says that the constant $c_p$ in \eqref{cp-def} is precisely the worst-case ratio $h_\ve'/(p-1)$ created by the correction \eqref{heps} for $1<p<2$. When $p \ge 2$ one can take $\alpha = 1+\kappa t$. We have not attempted to optimize the dependence on $\kappa$; for $p = 2$, refinements of the classical form of \eqref{LY-kappa} are known, and it would be interesting to obtain their analogues here.
\end{rmrk}

\subsection{The scope of Theorem \ref{T:LY-kappa}}\label{SS:kappa-scope}

Theorem \ref{T:LY-kappa} replaces the hypothesis $\Ric \ge 0$ of Theorem \ref{T:LY} by the ostensibly weaker $\Ric \ge -\kappa g$. Before using this generalization, two questions need to be settled, and this is the sole purpose of the present subsection.

The first question is whether the new hypothesis is vacuous: since $\kappa$ is not fixed in advance, could $\Ric \ge -\kappa g$ simply hold for \emph{every} closed manifold, for a trivial reason, so that Theorem \ref{T:LY-kappa} carries no information beyond Theorem \ref{T:LY}? We show in (A) below that the answer is yes -- the hypothesis \emph{is} automatic on every closed manifold -- but that this is exactly what makes the theorem useful rather than empty: it means Theorem \ref{T:LY-kappa} applies unconditionally, with $\kappa$ simply read off from the geometry at hand, and no verification required of the user.

Granting this, the second and real question is whether the extension is worth anything: are there closed manifolds on which $\Ric \ge -\kappa g$ holds for some finite $\kappa>0$ while $\Ric \ge 0$ genuinely fails, so that Theorem \ref{T:LY-kappa} covers cases Theorem \ref{T:LY} does not? We show in (B) below that such manifolds are not merely a few isolated examples, but the overwhelming majority: in dimension $n \ge 3$, \emph{every} closed manifold, regardless of its topology, carries a metric of this type.

\smallskip
\noindent\textbf{(A) The hypothesis $\Ric \ge -\kappa g$ is automatic on a closed manifold.} Since $\bM$ is closed, the function
\[
x \mapsto \min\{\Ric_x(\zeta,\zeta) : \zeta \in T_x\bM,\ |\zeta| = 1\}
\]
is continuous on a compact space, hence bounded below; consequently
\[
\Ric \;\ge\; -\kappa\, g \qquad \text{with} \qquad
\kappa := -\min_{x \in \bM}\ \min_{|\zeta| = 1} \Ric_x(\zeta,\zeta) \;<\; \infty
\]
holds automatically on every closed Riemannian manifold. Theorem \ref{T:LY-kappa} therefore applies unconditionally; only the value of $\kappa$, and with it the size of $\alpha(t) = 1 + c_p\kappa t$, depends on the geometry. In this sense $\kappa > 0$ is the generic situation, not an added restriction, and it is the case $\Ric \ge 0$ of Theorem \ref{T:LY} that is the special, degenerate one.

\smallskip
\noindent\textbf{(B) The hypothesis is genuinely broader than $\Ric \ge 0$.} Two classes of examples show that the manifolds newly covered by Theorem \ref{T:LY-kappa} -- those with $\Ric \ge -\kappa g$ but not $\Ric \ge 0$ -- are abundant, both concretely and in principle.
\begin{itemize}
\item[(i)] \emph{Explicit examples.} A closed hyperbolic $n$-manifold, that is, a compact quotient of $\mathbb H^n_{\mathbb R}$ by a torsion-free discrete group of isometries, satisfies $\Ric = -(n-1)g$; here $\kappa = n-1$, and $\Ric \ge 0$ fails at every point. For a closed quotient of complex hyperbolic space $\mathbb H^m_{\mathbb C}$, with the metric normalized so that the holomorphic sectional curvature is $-4$, one has $\Ric = -2(m+1)g$, so that $\kappa = 2(m+1)$ and $n = 2m$. Products such as $\bM_1 \times \bM_2$, with $\bM_1$ closed hyperbolic and $\bM_2$ a flat torus, satisfy $\Ric \ge -\kappa g$ with the same $\kappa$ as $\bM_1$, while having $\Ric$ neither nonnegative nor everywhere negative.
\item[(ii)] \emph{No topological obstruction, in general.} By a special case of Lohkamp's theorem \cite{Lo} -- stated there for arbitrary, not necessarily compact, manifolds, which acquire a \emph{complete} metric of negative Ricci curvature -- every closed manifold of dimension $n \ge 3$ carries a Riemannian metric with $\Ric < 0$ everywhere. Hence in dimension $n \ge 3$ the class of closed manifolds covered by Theorem \ref{T:LY-kappa} but not by Theorem \ref{T:LY} is topologically unrestricted: any closed $n$-manifold, of any topological type, carries a metric to which Theorem \ref{T:LY-kappa} applies with some $\kappa>0$ while Theorem \ref{T:LY} does not apply at all. Dimension two is the exception: by the Gauss-Bonnet theorem, a closed surface admits a metric with $\Ric < 0$ only if its genus is at least two.
\end{itemize}

\subsection{Complete noncompact manifolds: the localized estimate}

We now combine the $\alpha$-machinery with a cutoff argument. Since the operator $L_\ve$ is not in divergence form, the localization requires an upper bound for the full Hessian of the distance function $r(x) = d(x,x_0)$ -- and not merely for $\Delta r$, which is all that $\Ric \ge 0$ controls. We therefore introduce the following:

\begin{hyp}[distance comparison]\label{H:cut}
There exists $C_0 \ge 0$ such that, in the barrier sense on $\bM \setminus B(x_0,1)$,
\[
\Delta r + (p-2)\, \nabla^2 r(\eta,\eta) \;\le\; \frac{C_0}{r} \qquad \text{for every unit vector } \eta .
\]
\end{hyp}

\noindent \Hcut is a comparison bound for the operators $\mathcal L(\xi)$, $L_\ve$ applied to the distance function; it is what the localization needs, and nothing more.
\begin{itemize}
\item[(i)] It always holds on $\bM = \Rn$, where $\nabla^2 r = \frac 1r(g - dr\otimes dr)$. \item[(ii)] When $p = 2$ it reduces to $\Delta r \le C_0/r$, which is the Laplacian comparison theorem under $\Ric \ge 0$: for $p=2$ no assumption beyond $\Ric \ge 0$ is made, consistently with the classical theory \cite{LY}. \item[(iii)] When $p > 2$ it holds as soon as $\operatorname{sec}_{\bM} \ge 0$, by the Hessian comparison theorem $\nabla^2 r \le \frac 1r\, g$ (combined with Calabi's trick at the cut locus, which is available here because only \emph{upper} barriers for $r$ are required).
\item[(iv)] When $1<p<2$, \Hcut instead requires a \emph{lower} Hessian bound $\nabla^2 r \ge -\frac{C}{r}\, g$, which is a more stringent requirement (it fails, in the barrier sense, at the cut locus); in this case \Hcut should be regarded as a working hypothesis, satisfied on $\Rn$ and, more generally, on manifolds with a pole and appropriately pinched curvature.
\end{itemize}

We emphasize that \Hcut plays no role whatsoever in the compact case: Theorems \ref{T:LY}, \ref{T:LY-complete} and \ref{T:LY-kappa} involve only the Ricci lower bound, as in the classical Li-Yau theory. The appearance of sectional-type curvature control in \emph{local} estimates is a recognized feature of pointwise (Cheng-Yau type) arguments for quasilinear operators in trace form: for $p$-harmonic functions, the local gradient estimate of Kotschwar-Ni \cite{KN} was proved under a lower bound on the sectional curvature, and this assumption was subsequently removed by Wang-Zhang \cite{WZ}, with constants depending only on the Ricci lower bound, by replacing the pointwise maximum principle with a Moser iteration that exploits the divergence structure of the $p$-Laplacian. We expect \Hcut to be removable by similar integral methods, run on the regularized flows \eqref{Eeps}; we caution, however, that they cannot be applied naively to the divergence form \eqref{me} itself, whose weak formulation carries no information on the critical set, see Remark \ref{R:formulation}. We do not pursue this here.

We also stress that, in accordance with the discussion in the introduction, the pointwise Bochner computations of the present section are not formal: they are carried out for the smooth solutions of the uniformly parabolic flows \eqref{Eeps}, and all the estimates are justified at every point.

We next introduce the second ingredient, a growth condition in the spirit of the maximum principles on noncompact manifolds, imposed on the Li-Yau quantity itself. Throughout, $x_0 \in \bM$ is fixed and $r(x) = d(x,x_0)$.

\begin{hyp}[growth of the Li-Yau quantity]\label{H:grow}
There exist $C_F \ge 0$ and $N \ge 0$ such that, on $\bM\times(0,T]$,
\[
t\,\big(h_\ve(|\nabla v|^2) - v_t\big) \;+\; t\,(v_t)^- \;\le\; C_F\,\big(1 + r(x)\big)^{N}.
\]
\end{hyp}

\noindent We emphasize what \Hgrow does \emph{not} require: no lower bound on $|\nabla v|$ (the critical set of $v$ is unrestricted), no upper bound on $|\nabla v|$ by itself, and no bounded geometry. It is a qualitative growth requirement on the Li-Yau numerator $t(h_\ve(s) - v_t)$, supplemented by the mild ``no fast collapse'' condition $t\,(v_t)^- \le C_F(1+r)^N$. Three observations put it in perspective. First, the conclusion of Theorem \ref{T:noncompact} below implies, a posteriori, that \Hgrow holds with $N = 0$: indeed $F_\ve \le 0$ gives $t(h_\ve(s)-v_t) \le a$ and $t\,(v_t)^- \le a$. Second, for the extremal profile \eqref{model} one has $t\big((p-1)|\nabla v|^2 - v_t\big) \equiv a$ and $t(v_t)^- \le a$, so the model satisfies the (unregularized) condition with $N = 0$: unlike a pointwise gradient bound, \Hgrow respects the cancellation between $|\nabla v|^2$ and $v_t$ that the Li-Yau quantity encodes. Third, the constants $C_F, N$ are allowed to depend on $\ve$: the conclusion $F_\ve \le 0$ is exact for each fixed $\ve$.

\begin{thrm}[sharp Li-Yau on complete noncompact manifolds]\label{T:noncompact}
Let $\bM$ be complete noncompact with $\Ric \ge 0$, satisfying \Hcut. Let $\ve \in (0,1]$ and let $v$ be a smooth solution of \eqref{Eeps} on $\bM\times(0,T]$ satisfying \Hgrow. Then
\[
F_\ve \;\le\; 0 \qquad \text{on } \bM\times(0,T];
\]
in particular, by \eqref{Feps-dominates},
\[
(p-1)|\nabla v|^2 - v_t \;\le\; \frac{n+p-2}{2(p-1)\,t} \qquad \text{on } \bM\times(0,T].
\]
No assumption whatsoever is made on the critical set of $v$.
\end{thrm}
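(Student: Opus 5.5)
The plan is a localized maximum principle for the relaxed functional $F_\alpha = t\,w_\alpha - \alpha^2 a$ with $\alpha>1$, multiplied by the cutoff $\phi=\psi^q$ described in the introduction. At the end we let $R\to\infty$ first and then $\alpha\downarrow 1$. Since $F_\alpha\to F_\ve$ pointwise as $\alpha\to1$, it suffices to show $F_\alpha\le 0$ for every $\alpha>1$.

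\emph{Setup.} Fix $\alpha>1$ and $R\ge 2$. Let $\psi=\psi_0(r/R)$, where $\psi_0\equiv1$ on $[0,1]$, $\psi_0\equiv 0$ on $[2,\infty)$, $\psi_0'\le 0$, and $|\psi_0'|^2\le C\psi_0$, $|\psi_0''|\le C$. Put $\phi=\psi^q$ with an integer $q>N+2$. Then:
\begin{itemize}
\item by \Hcut and $\psi_0'\le 0$, in the barrier sense (Calabi's trick at the cut locus),
\[
L_\ve\phi\ge -\frac{C q^2}{R^2}\,\phi^{1-2/q};
\]
\item since $a^{ij}_\ve$ has eigenvalues in $[\min\{1,p-1\},\max\{1,p-1\}]$, the eigenvalue $1+(p-2)\beta$ in the direction $\nabla r$ enters the \Hcut bound through $\nabla^2 r(\eta,\eta)$ with $|\eta|\le1$; this is handled by the convex combination of the cases $\beta=0$ and $\beta=1$;
\item $|\nabla\phi|^2/\phi\le Cq^2R^{-2}\phi^{1-2/q}$.
\end{itemize}
Set $G=\phi F_\alpha$. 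By \Hgrow, $F_\alpha\le \alpha\, t(h_\ve-v_t)+(\alpha-1)t(v_t)^-\cdot$const$\,\le C(1+r)^N$, because $w_\alpha=\alpha w-(\alpha-1)h_\ve$ and $h_\ve\ge0$. Hence $G$ is bounded and vanishes outside $B_{2R}$. If $\sup G-\nu t>0$, it is attained at an interior point $(x_0,t_0)$ with $t_0>0$. At that point $\nabla F_\alpha=-F_\alpha\nabla\phi/\phi$, and $0\ge L_\ve G - G_t+\nu$ (barrier sense where $r$ is not smooth).

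\emph{Main computation.} Expand
\[
L_\ve G=\phi L_\ve F_\alpha+F_\alpha L_\ve\phi+2a^{ij}_\ve\phi_iF_{\alpha,j},
\]
and insert Proposition \ref{P:alpha-coercive} (legitimate since $F_\alpha>0$ forces $w_\alpha>0$), with $\Ric\ge0$. Multiplying by $\phi$ gives
\[
0\ge \frac{G(G+\phi a_\alpha)}{\alpha^2at_0}+c_\alpha\,\phi\, s\,(G+\phi a_\alpha)+2\phi\langle\nabla F_\alpha,\phi A_\ve\rangle\cdot(-1)+\phi F_\alpha L_\ve\phi-2a^{ij}_\ve\frac{\phi_i\phi_j}{\phi}G\phi+\dots,
\]
where $c_\alpha=\frac{4(p-1)^2(\alpha-1)}{(n+p-2)\alpha^2}$.

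The dangerous term is the drift term $2\langle\nabla F_\alpha,A_\ve\rangle\phi^2=-2G\langle\nabla\phi,A_\ve\rangle$. Decompose $A_\ve=h_\ve'\nabla v+(p-2)B_\ve$.
\begin{itemize}
\item The $\nabla v$ part is bounded by $C G|\nabla\phi||\nabla v|$. By Young's inequality it is absorbed into $c_\alpha\phi sG$ at the cost of $C_\alpha G|\nabla\phi|^2/\phi$. This is where $\alpha>1$ is used.
\item The $B_\ve$ part is the main obstacle, since $|B_\ve|\lesssim|\nabla^2v|/\sqrt\sigma$ involves the Hessian. I would handle it by keeping part of the coercive term $2h_\ve'\mathcal Q_\ve\ge c|\nabla^2 v|^2$ instead of spending all of it in Newton's inequality. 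Concretely, write $\mathcal Q_\ve\ge(1-\theta)\,w^2/(n+(p-2)\beta)+\theta\mathcal Q_\ve$; this explains the notation $F_{\alpha,\theta}$ in the introduction. Then
\[
G|\nabla\phi||\nabla^2v|\,\sigma^{-1/2}\le \theta\phi^2 t|\nabla^2v|^2+C_\theta G^2|\nabla\phi|^2/(\phi^2 t\sigma),
\]
and $\sigma\ge \ve^2$ controls the last factor, with constants depending on $\ve$. The sharp constant is lost by the factor $(1-\theta)$, but this is recovered by letting $\theta\to0$ after $R\to\infty$, because the errors vanish in that limit.
\end{itemize}

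\emph{Conclusion.} All errors are bounded by
\[
C(\alpha,\theta,\ve,T)\,\frac{q^2}{R^2}\,\phi^{-2/q}\,(G+G^2).
\]
At points where $\phi^{2/q}\ge c_0R^{-2}$, with $c_0$ large, they are absorbed by $\frac{(1-\theta)G^2}{\alpha^2 a t_0}$. This is a contradiction unless $G\le \theta$-small plus $O(R^{-1})$, using also $G+\phi a_\alpha$ in the linear part. At the remaining points $\phi\le CR^{-q}$, and \Hgrow gives $G\le CR^{N-q}\to0$. Either way, $\sup_{B_R}F_\alpha\le o(1)$ as $R\to\infty$. Letting $R\to\infty$, then $\nu,\theta\to0$, then $\alpha\downarrow1$, yields $F_\ve\le0$; by \eqref{Feps-dominates} this gives the stated inequality. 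The step I expect to be most delicate is the $B_\ve$ drift absorption, including checking that the $\theta$-loss really disappears in the limit.
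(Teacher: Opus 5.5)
Your proposal is correct and follows essentially the paper's proof. The shared ingredients are: the $\alpha$-relaxed functional with a retained fraction $\theta$ of $\mathcal Q_\ve$ to absorb the $B_\ve$-part of the drift (at an $\ve$-dependent cost); Young's inequality against the $c_\alpha\, s\,\phi\, G$ term for the $\nabla v$-part; the cutoff $\phi=\psi^q$ with the dichotomy $\phi^{2/q}\gtrless c_0R^{-2}$; and \Hgrow in the small-$\phi$ case. The only cosmetic difference is that you keep the constant $\alpha^2 a$ and carry the $\theta$-loss as an additive $O(\theta)$ error, whereas the paper builds it into $a_\theta=\frac{n+p-2}{2(p-1-\theta)}$.

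One point to tighten: by your own bounds with $\phi^{1-2/q}\le 1$, the linear errors (from $L_\ve\phi$, from $a^{ij}_\ve\phi_i\phi_j/\phi$, and from the $\nabla v$-drift) are $O(q^2R^{-2})\,G$ with no $\phi^{-2/q}$ factor. This is what makes the fixed threshold $c_0R^{-2}$ give a bound in the large-$\phi$ case that vanishes as $R\to\infty$. Your lumped bound $C\frac{q^2}{R^2}\phi^{-2/q}(G+G^2)$ would instead leave an $O(1/c_0)$ term there.
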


\begin{proof}
The proof combines the $\alpha$-coercivity of Proposition \ref{P:alpha-coercive} with a cutoff argument localizing to a large ball $B_R$, and proceeds in four steps: Step 1 retains, in the coercivity inequality, a small multiple of the term $|\nabla^2v|^2$ that will absorb the errors created by the cutoff; Step 2 constructs the cutoff function $\phi$ and controls $L_\ve\phi$ using \Hcut; Step 3 runs the maximum principle for $G := \phi F_{\alpha,\theta}$ and splits the argument into two cases according to the size of $\phi$ at the maximum point, using \Hgrow to control the case in which $\phi$ is small; Step 4 lets $R \to \infty$. Throughout, fix $\alpha \in (1,2]$, $\theta \in \big(0,\tfrac{p-1}2\big]$, an integer $q \ge \max\{2, N+1\}$, and $R \ge 1$.

\medskip

\noindent \underline{Step 1:} \emph{coercivity with a retained Hessian term.} Splitting $2h_\ve'\,\mathcal Q_\ve = 2(h_\ve'-\theta)\mathcal Q_\ve + 2\theta\,\mathcal Q_\ve$ in the proof of Proposition \ref{P:alpha-coercive}, and using $\mathcal Q_\ve \ge m|S|^2$ with $m := \min\{1,p-1\}$, $S = \nabla^2 v$, the proof of \eqref{claim-pointwise} goes through with $p-1$ replaced by $p-1-\theta$ in the coercive constant: for $1<p<2$ the required inequality becomes $K(\ve^2/\sigma)^\delta \ge \frac{p-1-\theta}{p-1}\, K\,\ve^2/\sigma$, which holds since $\ve^2/\sigma \le 1$ and $\delta<1$. Hence, at every point of $\bM\times(0,T]$ where $w_\alpha \ge 0$,
\begin{equation}\label{ncp-coercive}
L_\ve F_{\alpha,\theta} - (F_{\alpha,\theta})_t + 2\langle \nabla F_{\alpha,\theta}, A_\ve\rangle
\;\ge\; \frac{F_{\alpha,\theta}(F_{\alpha,\theta}+a_{\alpha,\theta})}{\alpha^2 a_\theta\, t}
+ c_{\alpha,\theta}\; s\, \big(F_{\alpha,\theta} + a_{\alpha,\theta}\big)
+ 2\theta\, m\, t\, |S|^2,
\end{equation}
where
\begin{align*}
a_\theta &:= \frac{n+p-2}{2(p-1-\theta)},
\quad
a_{\alpha,\theta} := \alpha^2 a_\theta,
\quad
F_{\alpha,\theta} := t\, w_\alpha - a_{\alpha,\theta},
\\
c_{\alpha,\theta} &:= \frac{4(p-1)(p-1-\theta)}{n+p-2}\,\frac{\alpha-1}{\alpha^2}
\ge \frac{(p-1)^2(\alpha-1)}{2(n+p-2)}.
\end{align*}
Note that, by \Hgrow and $1 < \alpha \le 2$,
\begin{equation}\label{G-consequence}
F_{\alpha,\theta} \;\le\; t\big(h_\ve(s) - v_t\big) + (\alpha-1)\, t\,(v_t)^- \;\le\; C_F\,(1+r)^N
\qquad \text{on } \bM\times(0,T].
\end{equation}

\medskip

\noindent \underline{Step 2: }\emph{the cutoff.} Let $\psi:[0,\infty)\to[0,1]$ be smooth, $\psi \equiv 1$ on $[0,1]$, $\psi \equiv 0$ on $[2,\infty)$, $|\psi'| + |\psi''| \le C$, and set
\[
\phi := \big[\psi(r/R)\big]^{\,q},
\]
the $q$-th power of the cutoff $\psi(r/R)$ (not to be confused with $\psi\big((r/R)^q\big)$). In particular $\phi \equiv 1$ on $\overline B_R$ and $\phi$ decays only in the annulus $R \le r \le 2R$. Since $\nabla\phi = q \psi^{q-1}\psi' \frac{\nabla r}{R}$ vanishes to order $q-1$ where $\psi$ vanishes, and $\psi^2 = \phi^{2/q}$, we have
\begin{equation}\label{bounds}
|\nabla \phi|^2 = \frac{q^2\,|\psi'|^2}{R^2}\,\phi^{\,2-\frac 2q} \le \frac{C q^2}{R^2}\,\phi, \qquad \frac{|\nabla\phi|^2}{\phi^2} = \frac{q^2\,|\psi'|^2}{R^2\,\phi^{2/q}} \le \frac{Cq^2}{R^2\,\phi^{2/q}}.
\end{equation}
The effect of the $q$-th power is that the ratio $|\nabla\phi|^2/\phi^2$, which multiplies $G^2$ below, is bounded by $Cq^2R^{-2}\phi^{-2/q}$ rather than by a fixed multiple of $\phi^{-2}$.
We now bound $L_\ve \phi$ from below; this is the only property of $\phi$ that will be used in Step 3. Since $\phi = f(r)$ with $f(\rho) := \psi(\rho/R)^q$, the chain rule gives, wherever $r$ is smooth,
\[
L_\ve \phi = f''(r)\, a^{ij}_\ve r_i r_j + f'(r)\, L_\ve r,
\qquad
L_\ve r := \Delta r + (p-2)\, \nabla^2 r(\eta,\eta),
\]
with $\eta := \nabla v/\sqrt\sigma$ the same regularized direction field entering $L_\ve$. By the uniform ellipticity \eqref{unif-ell}, applied to the unit vector $\nabla r$, the first coefficient satisfies $a^{ij}_\ve r_i r_j \in [\min\{1,p-1\},\max\{1,p-1\}]$, while $|f''(r)| \le Cq^2/R^2$ by the same bounds on $\psi',\psi''$ used in \eqref{bounds}. Since $\psi' \le 0$, we have $f'(r) \le 0$; consequently, multiplying an \emph{upper} bound on $L_\ve r$ by $f'(r)$ produces a \emph{lower} bound on the term $f'(r)\,L_\ve r$, and hence on $L_\ve\phi$. It remains, then, only to bound $L_\ve r$ from above.

This is where the two curvature hypotheses come in, and why they must be combined rather than used separately: the Laplacian comparison theorem (valid under $\Ric \ge 0$) controls $\Delta r$ alone, while \Hcut controls the full operator $\Delta r + (p-2)\nabla^2r(\eta,\eta)$ only when $\eta$ is a \emph{unit} vector -- and the vector $\eta = \nabla v/\sqrt\sigma$ appearing here need not be one, since $|\eta|^2 = \beta := s/\sigma \in [0,1)$ by \eqref{sigma}. To bridge this gap, we interpolate: writing $L_\ve r$ as the convex combination of its two extreme cases, $\beta = 0$ (the pure Laplacian) and $\beta = 1$ (the full operator at the unit vector $\eta/|\eta|$),
\[
\Delta r + (p-2)\nabla^2 r(\eta,\eta)
= (1-\beta)\,\Delta r + \beta\Big[\Delta r + (p-2)\nabla^2r\big(\tfrac\eta{|\eta|},\tfrac\eta{|\eta|}\big)\Big],
\]
an identity checked at once by writing $\Delta r = (1-\beta)\Delta r + \beta \Delta r$ and $(p-2)\nabla^2r(\eta,\eta) = (p-2)\beta\,\nabla^2r(\eta/|\eta|,\eta/|\eta|)$. Since $\beta \in [0,1]$, each of the two comparison theorems now applies to the term it governs, giving
\[
L_\ve r \;\le\; (1-\beta)\,\frac{n-1}{r} + \beta\, \frac{C_0}{r} \;\le\; \frac{n-1+C_0}{r}.
\]
Past the cut locus of $r$, where $r$ itself may fail to be twice differentiable, this bound is understood in the barrier sense: at any point $x_1$ one replaces $r$ by a smooth function $r_\delta \ge r$ agreeing with $r$ at $x_1$ (Calabi's device), to which the two comparison theorems -- and hence the bound above -- genuinely apply; this requires only an \emph{upper} barrier for $r$, which is all we have used. Combined with the bounds on $f',f''$ from above, this gives
\begin{equation}\label{Lphi-ncp}
L_\ve \phi \;\ge\; -\,\frac{C_1(n,p)\, q^2}{R^2} .
\end{equation}

\medskip

\noindent \underline{Step 3:} \emph{the two Cases A and B.} Let $G := \phi\, F_{\alpha,\theta}$, and note that $G = F_{\alpha,\theta}$ on $\overline B_R \times [0,T]$, where $\phi \equiv 1$. Let 
\[
\mathfrak M := \max_{\bar B_{2R}\times[0,T]} G.
\]
If $\mathfrak M \le 0$, then $F_{\alpha,\theta} \le 0$ on $B_R\times(0,T]$ and there is nothing to prove in Step 4 below, so assume $\mathfrak M > 0$, attained at $(x_1,t_1)$. Since 
\[
F_{\alpha,\theta}(\cdot,0) = -a_{\alpha,\theta} < 0,
\]
we have $t_1 > 0$, and $x_1$ lies in the interior of $\{\phi > 0\}$. At $(x_1,t_1)$ we have: 
\begin{align*}
& \nabla G = 0,\ \ (\text{hence}\ \nabla F_{\alpha,\theta} = -F_{\alpha,\theta}\frac{\nabla\phi}{\phi}),\ \ \  
G_t \ge 0;\ \ \  L_\ve G \le 0. 
\end{align*}
Also,
\[
 w_\alpha = \frac{F_{\alpha,\theta} + a_{\alpha,\theta}}{t_1} > 0,
 \]
 so \eqref{ncp-coercive} applies. Using
\[
|A_\ve| \le h_\ve'(s)\sqrt s + |p-2|\,|B_\ve|,
\qquad
|B_\ve| \le \frac{2|S|\sqrt s}{\sigma} \le \frac{2|S|}{\sqrt\sigma} \le \frac{2|S|}{\ve},
\]
and multiplying through by $\phi^2$ exactly as in the compact case, we obtain at $(x_1,t_1)$
\begin{align*}
0 & \ge \frac{G^2}{\alpha^2 a_\theta t_1}
+ c_{\alpha,\theta} s \phi G
+ 2\theta m t_1 \phi^2 |S|^2
- 2(p-1+K)\sqrt s |\nabla\phi| G
\\
& \ \ - \frac{4|p-2|}{\ve} |S| |\nabla\phi| G
- \frac{C_2 q^2}{R^2} G .
\end{align*}
By Young's inequality, using $|\nabla\phi|^2 \le \frac{Cq^2}{R^2} \phi$ from \eqref{bounds} to simplify the first line, we find
\begin{align*}
& 2(p-1+K)\sqrt s\,|\nabla\phi|\, G \;\le\; c_{\alpha,\theta}\, s\,\phi\, G + \frac{C_3(n,p)\, q^2}{(\alpha-1) R^2}\, G,
\\
& \frac{4|p-2|}{\ve}\,|S|\,|\nabla\phi|\,G \;\le\; 2\theta m t_1 \phi^2 |S|^2 + \frac{2(p-2)^2}{\theta m t_1 \ve^2}\,\frac{|\nabla\phi|^2}{\phi^2}\, G^2.
\end{align*}
The ratio $|\nabla\phi|^2/\phi^2$ in the right-hand side of the second inequality is left as is for now; it will be estimated in Case A below, using the second bound of \eqref{bounds}, 
\[
\frac{|\nabla\phi|^2}{\phi^2} \le \frac{C q^2}{R^2\phi^{2/q}}.
\]
\smallskip
\noindent\emph{Case A:} Assume that
\[ 
\phi(x_1)^{2/q} \ge b_R := \dfrac{4C(p-2)^2  q^2 \alpha^2 a_\theta}{\theta\, m\, \ve^2\, R^2}.
\]
Then, by the second bound in \eqref{bounds}, the coefficient of $G^2$ produced by the second Young inequality is at most
\[
\frac{2(p-2)^2}{\theta m t_1 \ve^2}\cdot \frac{Cq^2}{R^2\,\phi(x_1)^{2/q}}
\;\le\; \frac{2(p-2)^2\, C q^2}{\theta m t_1 \ve^2 R^2}\cdot \frac{\theta m \ve^2 R^2}{4C(p-2)^2 q^2 \alpha^2 a_\theta}
\;=\; \frac{1}{2\alpha^2 a_\theta t_1}
\]
(the factor $t_1$ cancels), and
\[
0 \;\ge\; \frac{G^2}{2\alpha^2 a_\theta t_1} - \frac{C_4(n,p)\,q^2}{(\alpha-1)R^2}\, G
\qquad\Longrightarrow\qquad
\mathfrak M \;\le\; \frac{C_5(n,p)\, q^2\, T}{(\alpha-1)\, R^2}\, .
\]

\smallskip
\noindent\emph{Case B:} Suppose instead that
\[
\phi(x_1)^{2/q} < b_R.
\]
Then, by \eqref{G-consequence},
\[
\mathfrak M = \phi(x_1)\, F_{\alpha,\theta}(x_1,t_1)  \le  b_R^{q/2} C_F (1+2R)^N
 \le  C(n,p,q,\theta,\ve,C_F)  R^{N-q} .
\]

\medskip

\noindent \underline{Step 4:} \emph{conclusion.} Since $\phi \equiv 1$ on $\overline B_R$, we have $F_{\alpha,\theta} = G \le \mathfrak M$ on $B_R\times(0,T]$, and, as $q \ge N+1$, in either case
\[
\sup_{B_R\times(0,T]} F_{\alpha,\theta}
\;\le\; \max\Big\{ \frac{C_5\, q^2\, T}{(\alpha-1) R^2}\, ,\; C\, R^{\,N-q} \Big\}
\;\underset{R\to\infty}{\longrightarrow}\; 0,
\]
with $\ve, \alpha, \theta, q$ fixed. Hence $F_{\alpha,\theta} \le 0$ on $\bM\times(0,T]$, and letting $\alpha \downarrow 1$, $\theta \downarrow 0$ gives $F_\ve \le 0$.

\end{proof}

\begin{cor}\label{C:noncompact-limit}
Let $\bM$ be as in Theorem \ref{T:noncompact}, and let $\ve_j \downarrow 0$ and $v^{\ve_j}$ be smooth solutions of \eqref{Eeps} (with $\ve = \ve_j$) on $\bM\times(0,T]$, each satisfying \Hgrow (with constants $C_F, N$ that may depend on $j$). If $v^{\ve_j} \to v$ locally uniformly on $\bM\times(0,T)$, then
\[
v_t \;\ge\; (p-1)|\nabla v|^2 - \frac{n+p-2}{2(p-1)\,t}
\]
in the viscosity sense on $\bM\times(0,T)$; in particular the sharp Li-Yau inequality holds for $u = e^v$ at every point at which $v$ is differentiable.
\end{cor}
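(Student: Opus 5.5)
The plan is to combine Theorem \ref{T:noncompact}, applied to each $v^{\ve_j}$, with the stability of viscosity supersolutions of the first-order Hamilton-Jacobi equation, exactly as in the last part of the proof of Theorem \ref{T:LY-complete}.

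\emph{Step 1: the approximations are supersolutions.} Fix $j$. Since $\bM$ is complete noncompact with $\Ric\ge 0$, satisfies \Hcut, and $v^{\ve_j}$ is a smooth solution of \eqref{Eeps} satisfying \Hgrow, Theorem \ref{T:noncompact} gives $F_{\ve_j}\le 0$ on $\bM\times(0,T]$. The constants $C_F,N$ enter only the proof, not the conclusion, so their dependence on $j$ is harmless. By \eqref{Feps-dominates} we then have, pointwise on $\bM\times(0,T]$,
\[
(v^{\ve_j})_t \ge (p-1)|\nabla v^{\ve_j}|^2 - \frac at, \qquad a=\frac{n+p-2}{2(p-1)} .
\]
Thus each $v^{\ve_j}$ is a classical, hence viscosity, supersolution of
\[
\phi_t-(p-1)|\nabla\phi|^2+\frac at=0 .
\]
The Hamiltonian $H(x,t,q)=-(p-1)|q|^2+a/t$ is continuous on $\bM\times(0,T)\times T\bM$; no singularity at $q=0$ is involved.

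\emph{Step 2: passage to the limit.} Since $v^{\ve_j}\to v$ locally uniformly on $\bM\times(0,T)$, stability of viscosity supersolutions for continuous equations (\cite[Remarks 6.3 and 6.4]{CIL}, \cite{Barles}) shows that $v$ is a viscosity supersolution of the same equation. This is local, so noncompactness of $\bM$ causes no difficulty. Concretely, let $\varphi\in C^1$ touch $v$ from below at $z_0$, and assume the minimum is strict, after subtracting a small quartic term. Then $v^{\ve_j}-\varphi$ has local minima at points $z_j\to z_0$. At those points the classical inequality for $v^{\ve_j}$ gives $\varphi_t(z_j)\ge (p-1)|\nabla\varphi(z_j)|^2-a/t_j$, since the gradients and time derivatives of $v^{\ve_j}$ and $\varphi$ coincide at an interior minimum. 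Letting $j\to\infty$ gives the inequality at $z_0$.

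\emph{Step 3: pointwise statement.} At a point where $v$ is differentiable in $(x,t)$, there is a $C^1$ function touching $v$ from below with the same differential (\cite[Section 2]{CIL}). The supersolution property then yields the inequality classically, which is the sharp Li-Yau inequality for $u=e^v$.

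The only point needing care is Step 1's uniformity. Theorem \ref{T:noncompact} holds for $\ve\in(0,1]$, so we may assume $\ve_j\le 1$. Its conclusion $F_\ve\le0$ is exact, with a constant independent of $\ve$ and of the growth data. No further obstacle is expected; in particular, no identification of $v$ as a solution of \eqref{0} is needed, since the limit is assumed to exist.
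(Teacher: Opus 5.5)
Your proposal is correct and follows essentially the same route as the paper. The paper applies Theorem \ref{T:noncompact} together with \eqref{Feps-dominates} to make each $v^{\ve_j}$ a classical, hence viscosity, supersolution of $\phi_t-(p-1)|\nabla\phi|^2+\frac at=0$, and then passes to the limit $v$ by stability of viscosity supersolutions under locally uniform convergence, exactly as you do.
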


\begin{proof}
By Theorem \ref{T:noncompact} and \eqref{Feps-dominates}, each $v^{\ve_j}$ is a classical, hence viscosity, supersolution of the Hamilton-Jacobi equation $\phi_t - (p-1)|\nabla\phi|^2 + \frac at = 0$; the conclusion follows from the stability of viscosity supersolutions under locally uniform convergence, as in the proof of Theorem \ref{T:LY-complete}.

\end{proof}

\begin{rmrk}\label{R:obstruction}
Theorem \ref{T:noncompact} treats the critical set of the solution on the same footing as the compact theory of Section \ref{S:critical}: no nondegeneracy is assumed, and the entire effect of noncompactness is condensed into Hypotheses \ref{H:cut} and \ref{H:grow}. Concerning \Hgrow, we note that it enters the proof at a single point (Case B), that it is a posteriori implied, with $N = 0$, by the conclusion itself, and that it is satisfied, again with $N = 0$, by the extremal profile \eqref{model}; in this sense it is a self-improving qualitative hypothesis rather than a quantitative restriction. What separates Theorem \ref{T:noncompact} and Corollary \ref{C:noncompact-limit} from an unconditional statement for a \emph{given} positive viscosity solution $u$ of \eqref{0} on a noncompact $\bM$ is an existence theory for the flows \eqref{Eeps} on $\bM\times(0,T]$ producing approximations $v^\ve \to \log u$ that satisfy \Hgrow, \emph{together with the identification of the limit}. We emphasize that the difficulty resides entirely in the second of these, and concerns the degenerate limit equation, not the approximating flows: for each fixed $\ve$ the equation \eqref{Eeps} is uniformly parabolic, so that in $\Rn$ the classical Tychonoff-type theory of existence and uniqueness -- which requires only a growth bound of the form $e^{A|x|^2}$ -- applies to it, and provides the approximations. What is missing is a comparison principle for the limit equation \eqref{0} in a corresponding growth class: for $p \neq 2$ the equation is degenerate at the critical set, and the comparison theorem presently available, \cite[Theorem 4.2]{BG}, is of Tychonoff type but restricted to \emph{bounded} solutions, which forces $\log u$ bounded and yields \Hgrow with $N = 0$. Extending it to solutions of $e^{A|x|^2}$-type growth would render Theorem \ref{T:noncompact} unconditional in $\Rn$; we plan to return to this, and to the noncompact case with $\Ric \ge -\kappa$, in a future study. Finally, \Hcut is imposed by the non-divergence structure of $L_\ve$, through the single term $F_{\alpha,\theta}\, L_\ve\phi$; in view of the precedent work of Wang-Zhang \cite{WZ} for $p$-harmonic functions, it is plausible that it can be removed, under $\Ric \ge 0$ alone, by integral methods applied to the regularized flows \eqref{Eeps}.
\end{rmrk}

\begin{rmrk}[on the cutoff $\psi^q$]\label{R:psiq}
It seems worthwhile to isolate the elementary mechanism on which the proof of Theorem \ref{T:noncompact} rests, since we believe it to be useful in other localization arguments and we are not aware of previous occurrences in the literature. The classical choice in Li-Yau-type cutoff arguments is $\phi = \psi^2$, for which $|\nabla\phi|^2/\phi \le C/R^2$. The choice $\phi = \psi^q$ with $q$ large has a different purpose, and its effect is twofold.
\begin{itemize}
\item[(i)] Since $\nabla\phi$ vanishes to order $q-1$ on $\{\phi = 0\}$, one has the estimate
\[
\frac{|\nabla\phi|^2}{\phi^2} \;\le\; \frac{Cq^2}{R^2\,\phi^{2/q}}
\]
for the ratio which, in the maximum principle argument, multiplies $G^2$. Consequently the quadratic error term $G^2|\nabla\phi|^2/\phi^2$ produced by the Hessian-dependent part of the drift is dominated by the coercive term $G^2/(\alpha^2a_\theta t)$ at every point where $\phi^{2/q} \ge b_R$, with $b_R$ of order $R^{-2}$.
\item[(ii)] At the remaining points, $\phi < b_R^{q/2}$, which is of order $R^{-q}$; combined with the bound $F_{\alpha,\theta} \le C_F(1+r)^N$ of \Hgrow this gives $G \le CR^{\,N-q}$. Hence a single dichotomy suffices for an arbitrary polynomial growth exponent $N$, subject only to the choice $q > N$.
\end{itemize}
The error introduced in case (i) grows only polynomially in $q$, of order $q^2$. In summary: the quadratic cutoff makes the gradient of $\phi$ bounded, whereas the $q$-th power with $q$ large makes it small precisely where $\phi$ is small, which is what the dichotomy requires.
\end{rmrk}

\section{From Li-Yau to the global Harnack inequality}\label{S:harnack}

In this section we present the proof of the Harnack inequality. In the light of Theorem \ref{T:LY}, the argument has a transparent interpretation: the Li-Yau inequality says that $v = \log u$ is a viscosity supersolution of a Hamilton-Jacobi equation with Hamiltonian $H(t,q) = -(p-1)|q|^2 + \frac{a}{t}$, and the Harnack inequality is its integrated (Hopf-Lax) form, the action $\frac{1}{4(p-1)}\int |\dot\gamma|^2$ appearing as the Legendre transform of the quadratic part of $H$.

\begin{proof}[Proof of Theorem \ref{T:global-harnack}]
We give the argument when $u$ is smooth; the general case follows by
running it along the approximating solutions $u^\ve = e^{v^\ve}$ of
\eqref{Eeps-u}, for which \eqref{LY-eps-u} holds classically, and then
letting $\ve \to 0$ using the locally uniform convergence established in
the proof of Theorem \ref{T:LY-complete}.

Let $\gamma:[t_1,t_2]\to \bM$ be any smooth curve with $\gamma(t_1)=x_1$ and $\gamma(t_2)=x_2$. Consider the function $\tau \mapsto v(\gamma(\tau),\tau)$. By the chain rule,
\[
\frac{d}{d\tau}v(\gamma(\tau),\tau) = v_t(\gamma(\tau),\tau)+\langle \nabla v(\gamma(\tau),\tau),\dot\gamma(\tau)\rangle.
\]
From \eqref{LY} we have $v_t\ge (p-1)|\nabla v|^2 - \dfrac{n+p-2}{2(p-1)\tau}$. Hence
\[
\frac{d}{d\tau}v(\gamma(\tau),\tau)\ge (p-1)|\nabla v|^2 + \langle \nabla v,\dot\gamma\rangle - \frac{n+p-2}{2(p-1)\tau}.
\]
For a fixed tangent vector $\dot\gamma$, the quadratic form in $\zeta:=\nabla v$ satisfies
\[
(p-1)|\zeta|^2 + \langle \zeta,\dot\gamma\rangle \ge -\frac{|\dot\gamma|^2}{4(p-1)},
\]
as one sees by completing the square (or minimizing in $\zeta$). Therefore
\[
\frac{d}{d\tau}v(\gamma(\tau),\tau) \ge -\frac{|\dot\gamma(\tau)|^2}{4(p-1)} - \frac{n+p-2}{2(p-1)\tau}.
\]
Integrating from $\tau=t_1$ to $\tau=t_2$ we obtain
\[
v(x_2,t_2)-v(x_1,t_1) \ge -\frac{1}{4(p-1)}\int_{t_1}^{t_2}|\dot\gamma(\tau)|^2\,d\tau - \frac{n+p-2}{2(p-1)}\ln\frac{t_2}{t_1},
\]
or equivalently,
\[
v(x_1,t_1)\le v(x_2,t_2) + \frac{n+p-2}{2(p-1)}\ln\frac{t_2}{t_1} + \frac{1}{4(p-1)}\int_{t_1}^{t_2}|\dot\gamma|^2 d\tau.
\]
We now minimize the action $\int_{t_1}^{t_2}|\dot\gamma|^2 d\tau$ among all curves joining $x_1$ to $x_2$ in time $t_2-t_1$. By standard Riemannian variational calculus the minimizer is the constant-speed minimizing geodesic (existence guaranteed by completeness). If $\sigma$ is the unit-speed minimizing geodesic from $x_1$ to $x_2$ of length $d = d(x_1,x_2)$, then the constant-speed reparametrization $\gamma(\tau)=\sigma\big(\frac{d}{t_2-t_1}(\tau-t_1)\big)$ satisfies $|\dot\gamma|=\dfrac{d}{t_2-t_1}$ and
\[
\int_{t_1}^{t_2}|\dot\gamma|^2\,d\tau = \frac{d^2}{t_2-t_1}.
\]
Substituting this minimum into the inequality for $v(x_1,t_1)$ and exponentiating, we obtain \eqref{Harnack}.
\end{proof}

\section{The equality case and some open problems}\label{S:final}

By Proposition \ref{P:sharp}, the constant $a = \frac{n+p-2}{2(p-1)}$ in
\eqref{LY} cannot be improved, and the extremal profile is the function
\eqref{model}, which saturates \eqref{LY} identically by \eqref{eq-model}.
It is natural to ask whether the equality case is rigid.

\begin{prob}\label{Prob:rigidity}
Suppose that equality holds in \eqref{LY} at some point $(x_0,t_0)$ (or on
an open set). Is it true that $\bM$ is flat and that $u$ coincides, modulo
translations in $x$ and dilations, with the explicit solution
\eqref{model}?
\end{prob}

When $p = 2$ this is known: tracing back the case of equality in the
Bochner inequality gives $\Ric(\nabla v,\nabla v) \equiv 0$ together with
\[
\nabla_i \nabla_j v = - \frac{1}{2t}\, g_{ij},
\]
and commuting covariant derivatives in the latter identity forces the full
curvature tensor to vanish, so that $\bM$ is flat and, by a rigidity
argument \`a la Weinberger, $v = \log u$ must be quadratic in $x$;
exponentiating, one recovers the Gaussian. For related arguments see
\cite{Ha} and \cite{Ni}. For $p \not= 2$ the same strategy applies
formally at points where $\nabla v \not= 0$ (equality in Lemma
\ref{L:newton-ext} again pins down the Hessian of $v$), but a complete
proof would require, in addition, a propagation-of-equality argument
through the critical set, which we leave as an open problem.

A second natural problem is the upgrade of Theorem \ref{T:noncompact} and
Corollary \ref{C:noncompact-limit} to an unconditional statement for a
given positive viscosity solution of \eqref{0} on a complete noncompact
manifold with $\Ric \ge 0$: as explained in Remark \ref{R:obstruction},
what is missing is a comparison principle for the degenerate limit
equation \eqref{0} in a class of $e^{A|x|^2}$-type growth, the
approximating flows themselves being uniformly parabolic at fixed $\ve$
and therefore covered, in $\Rn$, by the classical Tychonoff-type theory;
for bounded solutions this is \cite[Theorem 4.2]{BG}. One would also
like to relax \Hcut to $\Ric \ge 0$ --
presumably by the integral methods of \cite{WZ}, see Remark
\ref{R:obstruction} -- and to treat the noncompact case with
$\Ric \ge -\kappa$.

\section*{Acknowledgments}

We are grateful to Professor Yoshikazu Giga for a correspondence on
viscosity solutions on Riemannian manifolds, which brought \cite{Gi} and
\cite{AJM} to our attention and, through the latter, led us to
\cite{AC, LW, LTW}.

\end{document}